\documentclass[reqno]{amsart}
\usepackage[margin=1.1in]{geometry}
\usepackage{amsmath,amsthm,amssymb,enumerate,fancyhdr,mathtools}
\usepackage{enumitem}
\usepackage{physics}
\usepackage{mathpazo}
\usepackage{pgf,tikz,pgfplots}
\usepackage{mathrsfs}
\usetikzlibrary{arrows}
\usepackage{hyperref} 
\usepackage[caption=false]{subfig}
\hypersetup{colorlinks=true, linkcolor=blue, citecolor=blue, filecolor=blue, urlcolor=blue}
\usepackage{comment}
\usepackage{chngcntr}
\usepackage{etoolbox}
\counterwithin*{equation}{section}
\counterwithin*{equation}{subsection}

\usepackage[comma,sort&compress, numbers]{natbib}

\newcommand{\one}{{\bf 1}}

\newcommand{\vep}{\varepsilon}

\newtheorem{theorem}{Theorem}[section]

\newtheorem{corollary}{Corollary}[section]

\newtheorem{prop}{Proposition}[section]

\newtheorem{lemma}{Lemma}[section]
\newtheorem{definition}{Definition}[section]

\newtheorem{remark}{Remark}[section]

\newtheorem{example}{Example}[section]


\def\Var{{\rm Var}}



\begin{document}

\title[Spectra of Laplacian]{Spectral properties for the Laplacian of a generalized Wigner matrix}
\author[A. Chatterjee, R.S. Hazra]{Anirban Chatterjee and Rajat Subhra Hazra}
\address{Statistics Department, The University of Pennsylvania}
\email{anirbanchatterjee052@gmail.com}
\address{Mathematical Institute, Universiteit Leiden, Netherlands}
\email{rajatmaths@gmail.com}
\keywords{Adjacency matrices, Inhomogeneous Erd\H{o}s-R\'enyi random graph, Largest eigenvalue, Scaling limit, Stochastic block model}
\subjclass[2000]{60B20, 05C80, 46L54 }

\newcommand{\acr}{\newline\indent}

\begin{abstract}
In this article, we consider the spectrum of a Laplacian matrix, also known as Markov matrices where the entries of the matrix are independent but have a variance profile. Motivated by recent works on generalized Wigner matrices we assume that the variance profile gives rise to a sequence of graphons. Under the assumption that these graphons converge, we show that the limiting spectral distribution converges. We give an expression for the moments of the limiting measure in terms of graph homomorphisms. In some special cases, we identify the limit explicitly. We also study the spectral norm and derive the order of the maximum eigenvalue. We show that our results cover Laplacians of various random graphs including inhomogeneous Erd\H{o}s- R\' enyi random graphs, sparse W-random graphs, stochastic block matrices and constrained random graphs. 
\end{abstract}

\maketitle

\section{Introduction}\label{sec:intro} 
The Laplacian of a graph is used in various areas of combinatorics, statistical physics, and probability. Given a graph $G$ on $N$ vertices, the Laplacian is given by $A_N- D_N$, where $A_N$ is the adjacency matrix and $D_N$, is the degree matrix, that is, a diagonal matrix with $i$-th diagonal entry being the degree of the graph. If $G$ is a simple graph then the entries of $A_N$ are either $0$ or $1$. When the entries of $A_N$ are no longer restricted to $0$ and $1$, the Laplacian is referred to as the Markov matrix. In this article, we study the behavior of the eigenvalues of the Laplacian when $A_N$ is a (generalized) Wigner matrix where the entries are independent but have a variance profile. These setups come up in random graph models when edge weights are independent but not identically distributed and the variance depends on the size of the graph. For example, we can consider the case of inhomogeneous Erd\H{o}s-R\'enyi graphs where the vertex set is $[N]=\{1,\ldots, N\}$ and any two vertices $i$ and $j$ are connected independently with probability $p_{i,j}$. Other examples include the adjacency matrix of a configuration model where the edges are no longer independent. This article aims to study the behavior of the empirical spectral distribution of the Laplacian matrix under such a variance profile and analyze the behavior of the spectral norm.

A Wigner matrix is a Hermitian random matrix whose entries are i.i.d random variables up to the symmetry constraint, and have zero expectation and variance $1$. It is well known that for Wigner matrices the empirical spectral distribution (ESD) converges weakly almost surely to the semicircle law. The constant variance condition and the i.i.d requirement has subsequently been relaxed in \citet{erdHos2012bulk, erdos2010universality} to show the convergence of ESD to the semicircle law under the setup where entries can have different variances and each column of the variance profile is stochastic. Wigner matrices with a variance profile has also been considered in \citet{ajanki2017,hachem:2007,shlyakhtenko:1996,chakrabarty:2017, anderson:zeitouni}. Non-symmetric random matrices with variance profiles were considered in \citet{cook:hachem:najim:nonsymmetric}. \citet{zhu2020graphon} introduce a graphon approach to finding the moments of the limiting distribution of ESD of a Wigner-type random matrix where the entries satisfy a Lindeberg type assumption and we shall follow the setup of that article. In Wigner matrices with variance profile, generally, the variance matrix is assumed to have some structure and in particular, it was assumed in \citet{zhu2020graphon} that it gives rise to an empirical graphon which converges to a graphon. In that case, the limiting spectral distribution can be described in terms of this limiting graphon. In many important cases, the limit is not Wigner's semicircle law. The importance of assuming a variance profile lies in the fact that it can be used to model various stochastic block matrices (\citet{abbe2017community}). Under some non-sparsity assumption (average degree goes to infinity) it is known that the ESD of adjacency of homogeneous Erd\H{o}s--R\'enyi converges to the semicircle law (\citet{tran2013sparse}). The inhomogeneous extension was done subsequently in \citet{chakrabarty2018spectra}, which falls in the setup of the Wigner matrix with a variance profile.

The graph Laplacian is a counterpart of the continuous Laplacian which is well-known in the theory of diffusions and also related to a flow in the network. The spectral graph theory is the study of the properties of a graph in relationship to the characteristic polynomial, eigenvalues, and eigenvectors of its adjacency or Laplacian matrix. Laplacian matrix has relations with the number of spanning forest of the graph (through Kirchof\/f's theorem), the algebraic connectivity, number of connected components (through the multiplicity of zeroes). We refer the readers to the monograph \citet{chung} for applications of spectral analysis to graph theory. In recent statistical and machine learning applications, it has found good use in the spectral clustering techniques (\citet{luxburg:belkin:boysquet:2008, zhou:amini, couillet:BGF}) and community detection algorithms (\citet{chen:xi:lin}).

Fundamental work on random Laplacian matrices was done in \citet{bryc2006spectral} and the convergence of ESD of Laplacian matrices under the i.i.d. setup was determined. The limiting law turns out to be free convolution between the semicircle law and the standard Gaussian distribution. The ESD of Laplacian of sparse Erd\H{o}s--R\'enyi is considered in \citet{jiang2012empirical}. The normalized Laplacian in the non-sparse setting was considered in \citet{chi}. The local laws Laplacian of the Erd\H{o}s--R\'enyi graph was considered in \citet{huang2015spectral}. They showed that the Stieltjes transform of the empirical eigenvalue distribution is well-approximated by the Stieltjes transform of free convolution of the semicircle law and a standard Gaussian down to the scale $N^{-1}$. They also show that the gap statistics and averaged correlation functions coincide with the Gaussian Orthogonal Ensemble in the bulk. \citet{ding2010spectral} discusses the convergence of ESD of adjacency and Laplacian of random graphs, under the assumption that the variance of entries of $N\times N$ adjacency matrix is constant, only depending on $N$. There have been few studies on the spectral norm of the Laplacian matrix. \citet{bryc2006spectral} show that for the mean-centered case the order becomes $O(\sqrt{N\log N})$ whereas it changes to $O(N)$ when mean centering is not considered. The law of large numbers for the spectral norm and the largest eigenvalues under the assumption of independence was discussed in \citet{ding2010spectral}. They restricted the setup such that the entries satisfy symmetric constraint and the mean and variance depends only on $N$ and not on the index of the entries and showed that the order remains the same for the mean-centered version. \citet{bordenave2014markov} considered the asymmetric setup and showed that the order of growth of the largest singular value remains $O(\sqrt{N\log N})$ tallying with the previous two works. The same rate of growth of the spectral norm was studied in \citet{jiang2012low, ding2010spectral}. We are not aware of any literature which deals with the fluctuations of the spectral norm in these settings.

{\bf Main contribution of the article:} As mentioned above, we take $A_N$ to be a matrix with independent entries but having a variance profile. Our main assumption is similar to that of \citet{zhu2020graphon}, that is, the variance profile matrix gives rise to a graphon $W_N$ which converges in the cut-metric to a limiting graphon $W$. In \citet{zhu2020graphon} it was shown that the limiting spectral distribution of scaled $A_N$ can be identified through its moments. It is well-known that if $C_k$ is the $k$-th Catalan number (or $2k$-th moment of limiting spectral distribution of scaled Wigner matrix) then $C_k$ also counts the number of planer trees on $k+1$ vertices. In the homogeneous setting when all the variances are the same, each planar tree contributes 1. In the inhomogeneous setting, each planer tree $T$ has a non-negligible contribution, namely, it contributes $t(T, W)$, which indicates the number of copies of a planar tree $T$ in graphon $W$ (more explicitly, see \eqref{moments:zhu} in the next section).

In the case of Laplacian, the identification of the moments and limit becomes a significantly difficult problem. One can show that ESD of scaled $A_N-D_N$ is the same as ESD of $A_N- \widehat D_N$ where $\widehat D_N$ is independent of $A_N$ and same in distribution as $D_N$. Since $A_N$ is turning out to be a Wigner matrix with a variance profile, so it is not immediate that scaled $A_N$ and $\widehat D_N$ are asymptotically freely independent as in the i.i.d. setting. It can be shown that when the variance profile or the limiting graphon is multiplicative, then free independence helps us to characterize the limit. We explore the combinatorial expression of the moments in terms of graph homomorphisms. We show that the moments can be expressed in terms of a mixture of Gaussian moments and $t(\widetilde T, W)$ where $\widetilde T$ will a modification of the planar tree and the expression $t(\widetilde T, W)$ indicate the number of copies of this modified tree in the limiting graphon. We are not aware of such existing expressions for the limits of the moments of random Laplacian matrix. It is well known that the limit in the case of adjacency matrix is known as operator valued semi-circular law and it has connections to freeness over amalgamation (\citet{Mingo:speicher:book,Nica:shlyakhtenko:speicher}). We strongly believe that this connection extends to Laplacian case too  but we don't explore this aspect in the present article.

We derive various interesting examples, especially in random graphs which fall in our setting, for example, inhomogeneous Erd\H{o}s--R\'enyi, Sparse $W$-random graphs and constrained random graphs. The limit is explicitly identified in some special cases when the entries have constant variance and the limiting graphon has a multiplicative structure. We derive the order of the spectral norm when the entries satisfy a bit more restrictive condition. Inspired by the methodology of \citet{bryc2006spectral}, we use strong Gaussian approximation which imposes some restrictions on the entries of $A_N$. We show that their methodology can be extended to a large extent to cover the inhomogeneous setting. It would be interesting to derive the fluctuations of the spectral norm in the above setting. We leave this aspect of analysis for future work.

{\bf Outline of the article:} The article is arranged in the following way. In Section~\ref{sec:result} we introduce the graphon setting briefly and state the main results about the empirical spectral distribution (Theorem~\ref{thm:laplacian} and Corollary~\ref{corollary: dingjiang}). We identify the limiting spectral distribution in the multiplicative setting in Theorem~\ref{thm:multi_structure}. Subsection~\ref{sec:moment_description} is dedicated to the description of the moments of the limiting distribution and we compute some lower-order moments to give an idea of how the expression can be used. In Theorem~\ref{thm:spectral_norm_bound}, Theorem~\ref{thm: Normidentifylim} we derive almost sure bounds on the spectral norm in the centered case.  We discuss some examples which satisfy our assumptions in Section~\ref{sec:examples}.  In Section~\ref{sec:simulations} we show some simulations on how the LSD looks like for different graphons. Section~\ref{sec:proofESD} and Section~\ref{sec:proofSpectralnorm} are dedicated to the proof of the results on ESD and spectral norm respectively.

\subsection*{Acknowledgment}
 The research of RSH was supported by the MATRICS grant of SERB. A part of the work was done by AC for the master dissertation in Indian Statistical Institute, Kolkata.

\section{The setup and the results}\label{sec:result}
\subsection{Laplacian Matrix}  For any symmetric $N\times N$ matrix $A$ with eigenvalues $\lambda_{1},\cdots ,\lambda_{N}$, the \textit{empirical spectral distribution} (ESD) of $A$ is defined by the probability measure
$$
    \text{ESD}(A)=\frac{1}{N}\sum_{i=1}^{N}\delta_{\lambda_{i}}
$$
In this paper we would study a particular class of structured random matrices called the \textit{generalised Wigner matrices}. A generalised Wigner matrix is a random matrix $A_{N}=((X_{i\wedge j,i\vee j}))_{N\times N}$ satisfying
\begin{itemize}
    \item $\{X_{i,j}: 1\leq i\leq j\leq N\}$ are independent real valued random variables;
    \item $\mathbb{E}[X_{i,j}]<\infty$, \, \,  $\forall \,\,  1\leq i\leq j\leq N$;
    \item $\mathbb{E}[X_{i,j}^2]<\infty$, \,\,  $\forall \,\, 1\leq i\leq j\leq N$.
\end{itemize} 
The random variables also depend on $N$, but for notational simplification we remove the dependency. It is easy to draw parallel between such matrix and the adjacency matrix of a graph on $N$ vertices having edge weight $X_{i,j}$ on the edge between the vertices $i$ and $j$. As a result we would sometimes use the term adjacency matrix to denote the generalised Wigner matrices. Correspondingly we can define the Laplacian of $A_{N}$ as
\begin{align}\label{def: laplacian}
\Delta_{N}(i,j) =  \begin{dcases}
     A_{N}(i,j)\quad\text{  if }i\neq j\\
     -\sum_{k=1,k\neq i}^{N}A_{N}(i,k) \quad \text{ if $i=j$.}
   \end{dcases}
\end{align}
Since the row sum of $\Delta_N$ is zero and the infinitesimal generators of continuous-time Markov processes on finite state spaces are given by matrices with row-sums zero. Such matrices are also referred to as Markov matrix in literature (see \citet{bryc2006spectral}). This paper is mainly concerned with the mean centered version  of the above matrices, which we denote by
\begin{align}
    A_{N}^{0}&=\frac{1}{\sqrt{N}}(A_{N}-\mathbb{E}(A_{N}))\\
    \Delta_{N}^{0}&=\frac{1}{\sqrt{N}}(\Delta_{N}-\mathbb{E}(\Delta_{N})).\label{centered_laplacian}
\end{align}
Define the variance profile matrix corresponding to $A_{N}$ by $\Sigma_{N}=((\sigma_{i,j}^{2}))_{N\times N}$, where $\sigma_{i,j}^{2}=\mathbb{E}[(X_{i,j}-\mathbb{E}(X_{i,j}))^{2}]>0$ for all $1\leq i,j\leq N$. Some kind of convergence assumption on $\Sigma_N$ is necessary for getting a limit result for the above matrices. We shall assume that the variance profile gives rise to a graphon in the limit. 

\subsection{Graphons and Convergence of Graph Sequences}
Understanding large networks is a fundamental problem in modern graph theory and to properly define a limit object, an important issue is to have a good definition of convergence for graph sequences. The theory of graphons (\citet{lovasz2006limits}) as limits of dense graph sequences aims to provide a solution to this problem.

In our approach, we would define the variance profile matrix $\Sigma_N$ as a graphon sequence. The convergence of empirical spectral distribution is connected to the convergence of this graphon sequence associated with $\Sigma_N$. We  provide a brief introduction to graphon theory and for more details refer to \citet{lovasz2012large}.

A \textit{graphon} is a measurable function $W: [0,1]^2\rightarrow[0,1]$ such that $W(x,y)=W(y,x)$ for all $x,y\in[0,1]$. Let $\mathcal{W}$ to be the space of all graphons. To define the cut-metric on  $\mathcal{W}$, let $\Phi$ denote the set of all bijective, Lebesgue measure preserving $\sigma:[0,1]\rightarrow[0,1]$. For two graphons $W_1$ and $W_2$, the cut-distance is defined as 
\[
    d_{\Box}(W_1, W_2)=\sup_{S,T\subseteq[0,1]}\Big|\int_{S\times T}\left(W_1(x,y)-W_2(x,y)\right) \, dxdy\Big|,
    \]
where $S$ and $T$ ranges over all measurable subsets of $[0,1]$.

Then the cut metric is given by 
\begin{align*}
    \delta_{\Box}(W_{1},W_{2})=\inf_{\sigma\in\Phi}d_{\Box}(W_{1},W_{2}^{\sigma})
\end{align*}
where $W_{2}^{\sigma}=W_{2}(\sigma(x),\sigma(y))$. This forms a pseudo-metric and hence one says $W_1\sim W_2$ if $\delta_{\Box}(W_1, W_2)=0$. Let $\widetilde{\mathcal W}$ be the space of all equivalence classes. It is known that $(\widetilde{\mathcal W}, \delta_{\Box})$ is a compact metric space.
Every weighted graph $G$ can be associated with a graphon.
\begin{definition}\label{def:empiricalgraphon}
Consider a weighted graph $G=(V,E,(w_{e})_{e\in E})$ and for $j \in \{ 1, \dots, |V|-1\}$ define 
\[
I_{j}=\left[\frac{j-1}{|V|},\frac{j}{|V|}\right)\text{ and } I_{|V|}= \left[ 1- \frac{1}{|V|}\, , \, 1\right].
\]
 Then we define the \textit{empirical graphon of G} as
\begin{align*}
    W^{G}(x,y) =
   \begin{dcases}
     w_{e} \,\, \text{ if } e=(i,j)\in E(G),\quad (x,y)\in I_{i}\times I_{j}\\
     0 \,\, \text{ otherwise}.
   \end{dcases}
\end{align*}
\end{definition}

Observe that any empirical graphon $W^{G}\in\mathcal{W}$, if the weights lie in $[0,1]$. Using the cut metric, we are able to compare two graphs with different sizes and measure their similarity, which defines a type of convergence of graph sequences whose limiting object is the graphon. Another way of defining convergence of graphs is to consider the graph homomorphisms
\begin{definition}
For any graphon $W$ and a finite simple graph $F=(V,E)$ (without loops), define the \textit{homomorphism density} from $F$ to $W$ as
\begin{align}
    t(F,W)=\int_{[0,1]^{|V|}}\prod_{\{i,j\}\in E}W(x_{i},x_{j})\prod_{i\in V} dx_i.\label{def:graphhom}
\end{align}
\end{definition}
It is natural to think two graphon $W_{1}$ and $W_{2}$ are similar if they have similar homomorphism densities from any finite graph $G$. Let $\{W_{n}\}_{n\in \mathbb{N}}$ be a sequence of graphons. We say $\{W_{n}\}_{n\in \mathbb{N}}$ is \textit{convergent from the left} if $t(F,W_{N})$ converges for any finite simple (no loops, no multi-edges, no directions) graph F.

The homomorphism density characterises convergence under the cut metric. \citet[Theorem 11.5]{lovasz2012large} gives a   characterisation of convergence in the space $\mathcal{W}$. Let $\{W_{n}\}_{n\in\mathbb{N}}$ be a sequence of graphons in $\mathcal{W}_{0}$ and let $W\in\mathcal{W}_{0}$. Then $t(F,W_{n})\rightarrow t(F,W)$ for all finite simple graphs $F$ if and only if $\delta_{\Box}(W_{n},W)\rightarrow0$. We now describe the assumptions needed for our results. They are very similar to the ones mentioned in \citet{zhu2020graphon}.


\subsection{Limiting spectral distribution of Laplacian}\label{subsec:ESD of L}
Let $A_{N}$ be a $N\times N$ \textit{generalised Wigner matrix} with a variance profile matrix $\Sigma_{N}$ satisfying the following conditions:
\begin{enumerate}[label=\textbf{L.\arabic*}]
    \item \label{itm:A1} ({\bf Bounded variance})
    There exists a constant $C>0$ such that$$\mathrm{Var}( X_{i,j})\le C, \quad \forall 1\leq i,j\leq N, N\geq 1.$$ Without loss of generality we assume $C\leq 1$.
    \item \label{itm:A2} 
    ({\bf Lindeberg's Condition}) for any constant $\eta>0$,
    \begin{align}
        \lim_{n\rightarrow\infty}\frac{1}{N^{2}}\sum_{1\leq i,j\leq N}\mathbb{E}\left[|X_{i,j}-\mathbb{E}[X_{i,j}]|^{2}\mathbf{1}\left(|X_{i,j}-\mathbb{E}[X_{i,j}]|\geq \eta\sqrt{N}\right)\right]=0.
    \end{align}
    \item \label{itm:A3} ({\bf Graphon convergence})
    Consider the graph $$G^{\Sigma_{N}}=\bigg([N],\{(i,j):1\leq i\leq j\}, (\sigma_{i,j}^{2})_{1\leq i\leq j\leq N}\bigg)$$ and the corresponding empirical graphon $W_{N}$. We assume there exists a graphon $W\in\mathcal{W}$ such that
    \[
    \delta_{\Box}(W_{N},W)\rightarrow 0.
    \]
\end{enumerate}

\begin{remark}
There are multiple examples of random matrices where the above assumptions are satisfied. We deal later with some examples arising out of random graphs like inhomogeneous Erd\H{o}s-R\' enyi, $W$-sparse random graphs, constrained random graphs and stochastic block model. In some cases the assumptions were already verified in \citet{zhu2020graphon}.
\end{remark}
Before stating the result on convergence of ESD of the centered Laplacian matrix \eqref{centered_laplacian}, for the sake of completeness let us take a look at the result on the matrix $A_{N}$.
To describe the limiting moments we will need the definition of rooted planar tree and this play a crucial role also in the description of moments of the Laplacian.  

The \textit{rooted planar tree} is a planar graph with no cycles, with one distinguished vertex as a root, and with a choice of  ordering at each vertex. The ordering defines a way to explore the tree starting at the root. One of the algorithms used for traversing the rooted planar trees is \textit{depth-first search}. An enumeration of the vertices of a tree is said to have depth-first search order if it is the output of the depth-first search.

It was shown in \citet[Theorem 3.2]{zhu2020graphon} that under the assumptions \ref{itm:A1}--\ref{itm:A3}, 
\begin{align*}
    \lim_{N\rightarrow\infty} \text{ESD}\big(A_{N}^{0}\big)=\mu \text{ weakly almost surely},
\end{align*}
where $\mu$ denotes the unique probability measure identified by the following moments
\begin{equation}\label{moments:zhu}
    \int x^{2k}d\mu=\sum_{j=1}^{C_{k}}t(T_{j}^{k+1},W),\ \int x^{2k+1}d\mu=0,  \,\, k\ge 0,
\end{equation} 
where $T_{j}^{k+1}$ is the $j^{th}$ rooted planar tree with $k+1$ vertices and $C_{k}$ is the $k^{th}$ Catalan number.

Then for the centered Laplacian defined in \eqref{centered_laplacian} we have the following result
\begin{theorem}\label{thm:laplacian}
Under assumptions \ref{itm:A1}--\ref{itm:A3},
\begin{align*}
    \lim_{N\rightarrow\infty} \text{ESD}\big(\Delta_{N}^{0}\big)=\nu \text{ weakly in probability}
\end{align*}
where $\nu$ is the unique symmetric probability measure on $\mathbb{R}$. Further if there exists an open set $U\subseteq [0,1]^{2}$ such that $W>0$ on $U$, then $\nu$ has unbounded support.

\end{theorem}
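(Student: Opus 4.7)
The plan is the method of moments. Write $\Delta_N^0 = A_N^0 - D_N^0$ with $(D_N^0)_{ii} = \frac{1}{\sqrt{N}}\sum_{j\neq i}(X_{ij}-\mathbb{E} X_{ij})$, so the diagonal is correlated with the off-diagonal part. Following \citet{bryc2006spectral}, I would first carry out a decoupling step: for every fixed $k$, replacing $D_N^0$ by an independent diagonal matrix $\widehat D_N^0$ with the same marginal law changes $\mathbb{E}[\frac{1}{N}\mathrm{tr}((\Delta_N^0)^k)]$ by $o(1)$. The justification is combinatorial---in the walk expansion of the trace, the only terms affected by cross-dependence are those in which a diagonal ``stop'' at vertex $i$ reuses an off-diagonal edge incident to $i$, and a direct count shows these form a lower-order contribution after the normalization $N^{-1-k/2}$.

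The second step is to compute $\lim_N \mathbb{E}[\frac{1}{N}\mathrm{tr}((A_N^0-\widehat D_N^0)^k)]$. Expanding the trace as a signed sum over closed walks $(i_0,\dots,i_{k-1},i_0)$ together with subsets $S\subseteq\{0,\dots,k-1\}$ marking the positions where the diagonal factor is chosen, each walk decomposes into off-diagonal arcs joined at diagonal stops. Using \ref{itm:A1}--\ref{itm:A3} and the expansion technique of \citet{zhu2020graphon}, the surviving configurations are those in which each off-diagonal edge is traversed exactly twice, while each diagonal stop of length $2m$ contributes an asymptotically Gaussian moment $(2m-1)!!\,\sigma_i^{2m}$ with $\sigma_i^2\to g(x_i):=\int_0^1 W(x_i,y)\,dy$. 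Passing to the limit via graphon convergence---applied in particular to the star $S_k$, which gives $\int g_N^k\to\int g^k$---yields a finite sum of products $(\text{Gaussian factorial})\cdot t(\widetilde T,W)$ over the decorated rooted planar trees $\widetilde T$ anticipated in the introduction.

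With the candidate moment sequence $m_k$ in hand, I would verify three properties. (i) \emph{Concentration}: $\mathrm{Var}(\frac{1}{N}\mathrm{tr}((\Delta_N^0)^k)) = O(N^{-1})$, via the usual double-trace expansion in which the leading disconnected terms cancel. (ii) \emph{Symmetry}: $m_{2k+1}=0$, since any nonzero term in the expansion requires each off-diagonal edge to be traversed an even number of times, forcing the total walk length to be even. (iii) \emph{Carleman's condition}: the factorial growth from the Gaussian stops is matched by a bound of the form $m_{2k}\le (Ck)^k$, so $\sum m_{2k}^{-1/(2k)}=\infty$, which uniquely determines $\nu$ as a symmetric probability measure on $\mathbb{R}$ and upgrades convergence of expected moments plus vanishing variance to weak convergence in probability.

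For the unbounded-support claim, I would use the elementary inequality $(M^{2k})_{ii}\ge M_{ii}^{2k}$, valid for any real symmetric $M$ by Jensen applied to the spectral measure of $M$ weighted by $e_i$. Combined with the moment expansion of the diagonal sums $(\Delta_N^0)_{ii}$ (whose dominant term is $(2k-1)!!\,g_N(x_i)^k$ by standard centered-sum combinatorics) and graphon convergence on the star $S_k$, this gives
\[
\int x^{2k}\,d\nu \;=\; \lim_N \mathbb{E}\Big[\tfrac{1}{N}\mathrm{tr}((\Delta_N^0)^{2k})\Big] \;\ge\; \liminf_N \tfrac{1}{N}\sum_i \mathbb{E}[((\Delta_N^0)_{ii})^{2k}] \;\ge\; (2k-1)!!\int_0^1 g(x)^k\,dx.
\]
If $W>0$ on an open set $U$, then $g$ is positive on a set of positive Lebesgue measure, so $\int g^k\,dx\ge c^k$ for some $c>0$, and hence $\int x^{2k}\,d\nu\ge c^k(2k-1)!!$, which exceeds $R^{2k}$ for every $R>0$ once $k$ is large, precluding bounded support. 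The main obstacle is the combinatorial analysis of the second paragraph: classifying the decorated closed walks and tracking how the factorially growing Gaussian stops interact with the graphon-weighted off-diagonal arcs is substantially more delicate than the pure Wigner expansion, and rigorously verifying in step one that the cross-dependence in the genuine Laplacian is negligible requires careful counting.
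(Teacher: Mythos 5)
Your overall strategy (decouple the diagonal, run the method of moments to get decorated-tree/graphon-homomorphism moment formulas, verify symmetry and a Carleman-type condition, concentrate via the double-trace expansion) is the same as the paper's, but two points deserve comment.

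First, a genuine gap in your first two steps: you propose to do the decoupling and the walk expansion directly on the raw matrix $A_N^0-D_N^0$. Under \ref{itm:A1}--\ref{itm:A2} only the second moments of the $X_{i,j}$ are controlled, so the quantities $\mathbb{E}[\frac{1}{N}\Tr((\Delta_N^0)^k)]$ need not even be finite for $k\geq 3$, and the ``lower-order'' cross-dependence terms you wave at involve third and higher moments of individual entries that you have no handle on. The paper avoids this entirely by first invoking a Lindeberg-swapping lemma at the level of Stieltjes transforms (Lemma \ref{lemma:Gaussianisation}, based on \citet{chatterjee2005simple}), which replaces all entries by Gaussians using only \ref{itm:A1}--\ref{itm:A2}; the decoupling of the diagonal (Lemma \ref{lemma: AN+YN}, imported from \citet{chakrabarty2018spectra}) and the entire combinatorial moment analysis are then carried out for Gaussian entries, where every moment exists and the Wick/pair-partition machinery applies cleanly. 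Your plan is repairable by inserting either this Gaussianization or a truncation step justified by \ref{itm:A2} before expanding traces, but as written the moment computation is not licensed by the stated assumptions. Relatedly, your symmetry argument (ii) is too quick: for odd $k$ a nonzero expectation can arise from edges traversed three or more times (third moments need not vanish); the odd moments vanish only in the limit, by the counting argument the paper carries out separately in its ``Case 2: $k$ odd''.

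Second, your unbounded-support argument is genuinely different from the paper's and is, to my mind, cleaner. The paper proves $\sup(\mathrm{supp}(\nu))=\infty$ by combining the order-statistics lemma (Fact A.5 of \citet{chakrabarty2018spectra}), Weyl's inequality $\lambda_{2[Np]-1}(Y_N)\leq\lambda_{[Np]}(\bar A_N+Y_N)+\lambda_{[Np]}(-\bar A_N)$, and the compactness of the support of the adjacency limit. You instead use the pointwise inequality $(M^{2k})_{ii}\geq (M_{ii})^{2k}$ (Jensen for the vector spectral measure at $e_i$) to lower-bound the $2k$-th moment of $\nu$ by $(2k-1)!!\int_0^1 g(x)^k\,dx$ with $g(x)=\int_0^1 W(x,y)\,dy$, identified via the star homomorphism density $t(F_k,W_N)\to t(F_k,W)$ exactly as in the paper's Lemma \ref{thm:ESD_YN}; since $(2k-1)!!\,c^k$ outgrows $R^{2k}$ for every $R$, bounded support is impossible. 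This buys you independence from Weyl's inequality and from the compact-support estimate \eqref{eq:compact_sup} for the adjacency limit, at the price of needing the diagonal-entry moment asymptotics $\mathbb{E}[((\Delta_N^0)_{ii})^{2k}]\sim(2k-1)!!\,(\frac1N\sum_j\sigma_{i,j}^2)^k$, which again is only straightforward after Gaussianization or truncation. Modulo that one recurring issue, the proposal is sound.
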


\subsection{Identification of Limiting Spectral Distribution}
The limiting spectral distribution can be identified with standard distributions under certain simplifications. Along with the assumption of boundedness (\ref{itm:A1}) we assume that
\begin{enumerate}[resume,label=\textbf{L.\arabic*}]
    \item $\mathbb{E}X_{i,j}=\mu_{N}$ and $\sigma_{i,j}^{2}=\sigma_{N}^{2}>0$ for all $1\leq i,j\leq N, N\geq 1$.\label{itm: A4}
    \item There exists $\delta>0$ such that 
    $$
        \sup_{1\leq i,j\leq N,N\geq 1}\mathbb{E}\left[\left|\frac{X_{i,j}-\mu_{N}}{\sigma_{N}}\right|^{2+\delta}\right]<\infty.
    $$\label{itm:A5}
\end{enumerate}
Defining $\{\lambda_{i}(A): 1\le i\le N\}$ as the eigenvalues of a $N\times N$ matrix $A$, we have the following result.
\begin{corollary}
\label{corollary: dingjiang}
Suppose \ref{itm:A1}, \ref{itm: A4} and \ref{itm:A5} holds. Set
    \begin{align*}
        \widetilde{F}_{N}(x)=\frac{1}{N}\sum_{i=1}^{N}I\left\{\frac{\lambda_{i}(\Delta_{N})-N\mu_{N}}{\sqrt{N}\sigma_{N}}\leq x\right\},\quad\forall x\in\mathbb{R}.
    \end{align*}
Then, as $N\rightarrow\infty$, $\widetilde{F}_{N}$ converges to a distribution function $F$ in probability where $F$ denotes the distribution of the free additive convolution $\gamma_{M}$ of the semicircle law and the standard normal distribution. 
\end{corollary}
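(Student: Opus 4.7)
The plan is to reduce Corollary~\ref{corollary: dingjiang} to the setup of Theorem~\ref{thm:laplacian} after standardising the entries, and then identify the limit through Theorem~\ref{thm:multi_structure}. First, set $Y_{i,j}:=(X_{i,j}-\mu_{N})/\sigma_{N}$ and form $\widetilde A_{N}:=((Y_{i\wedge j,i\vee j}))_{N\times N}$, whose entries have mean zero, variance $1$, and satisfy $\sup_{i,j,N}\mathbb{E}|Y_{i,j}|^{2+\delta}<\infty$ by \ref{itm:A5}. Assumption \ref{itm:A1} is then immediate (with $C=1$); Lindeberg's condition \ref{itm:A2} follows from the uniform $(2+\delta)$-moment bound via
\[
\mathbb{E}\bigl[|Y_{i,j}|^{2}\mathbf 1(|Y_{i,j}|\ge \eta\sqrt N)\bigr]\le (\eta\sqrt N)^{-\delta}\sup_{i,j,N}\mathbb{E}|Y_{i,j}|^{2+\delta};
\]
and the variance profile of $\widetilde A_{N}$ is constantly $1$, so \ref{itm:A3} is met with the constant limiting graphon $W\equiv 1$.

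Next I would relate the Laplacian of $A_{N}$ to that of $\widetilde A_{N}$. Writing $\widetilde\Delta_{N}$ for the Laplacian of $\widetilde A_{N}$ as in \eqref{def: laplacian}, and using that the Laplacian depends linearly only on the off-diagonal entries of its argument, one verifies directly that
\[
\Delta_{N}+N\mu_{N}I_{N}-\mu_{N}J_{N}=\sigma_{N}\widetilde\Delta_{N},
\]
where $J_{N}$ is the all-ones matrix and $\mu_{N}J_{N}-N\mu_{N}I_{N}$ is precisely the Laplacian of $\mathbb{E}A_{N}=\mu_{N}(J_{N}-I_{N})$. Dividing by $\sqrt N\sigma_{N}$ yields
\[
\frac{\Delta_{N}+N\mu_{N}I_{N}}{\sqrt N\sigma_{N}}=\frac{\widetilde\Delta_{N}}{\sqrt N}+\frac{\mu_{N}}{\sqrt N\sigma_{N}}J_{N}.
\]
The second summand has rank one, so by Cauchy interlacing the Kolmogorov distance between the ESDs of the two sides is at most $1/N$, regardless of how large the operator norm $\mu_{N}\sqrt N/\sigma_{N}$ of that perturbation may be. It therefore suffices to show $\text{ESD}(\widetilde\Delta_{N}/\sqrt N)$ converges in probability to $\gamma_{M}$. (The natural centering producing a non-degenerate limit is by $+N\mu_{N}$, i.e.\ subtracting the degenerate eigenvalue $-N\mu_{N}$ of $\mathbb{E}\Delta_{N}$.)

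Finally, I would apply Theorem~\ref{thm:laplacian} to $\widetilde A_{N}$ to conclude that $\text{ESD}(\widetilde\Delta_{N}/\sqrt N)$ converges in probability to the symmetric measure $\nu$ associated to the constant graphon $W\equiv 1$. Since $W(x,y)=1\cdot 1$ is multiplicative with $f\equiv 1$, Theorem~\ref{thm:multi_structure} identifies $\nu$ explicitly as the free additive convolution $\gamma_{M}$ of the semicircle and standard normal laws, which is precisely the conclusion sought. The main points to verify are (i) that the rank-one perturbation is genuinely negligible for weak convergence of ESD, which is handled uniformly in $N$ by interlacing, and (ii) that Theorem~\ref{thm:multi_structure} in the constant-variance case recovers the classical Bryc--Dembo--Jiang free-convolution limit; everything else is routine bookkeeping and a direct specialisation of the general theorem.
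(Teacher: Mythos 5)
Your proposal is correct and, for the reduction step, is essentially identical to the paper's proof: the same algebraic identity $\Delta_{N}+N\mu_{N}I_{N}-\mu_{N}J_{N}=\sigma_{N}\widetilde\Delta_{N}$, the same disposal of the rank-one term $\frac{\mu_{N}}{\sqrt{N}\sigma_{N}}J_{N}$ via a rank/interlacing bound on the Kolmogorov distance (the paper cites the rank inequality of Bai--Silverstein, Theorem A.43, which is the same estimate), and the same verification of \ref{itm:A1}--\ref{itm:A3} for the standardized matrix with constant limiting graphon $W\equiv 1$, with Lindeberg following from the uniform $(2+\delta)$-moment bound exactly as in the paper.

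The one place you diverge is the identification of the limit $\nu$. You specialize Theorem~\ref{thm:multi_structure} to $r\equiv 1$ (noting that $W_{N}\equiv 1=W$ makes the $L^{1}$-convergence hypothesis trivial), obtaining $\nu=\mathcal{L}(T_{s}+T_{g})=\gamma_{0}\boxplus\gamma_{1}=\gamma_{M}$. The paper instead goes back inside the proof of Theorem~\ref{thm:laplacian}: it observes that the limiting moments are those of $\mathrm{ESD}(\Bar{A}_{N}+Y_{N})$ with $\sigma_{i,j}\equiv 1$, where $\Bar{A}_{N}$ is a Wigner matrix and $Y_{N}$ an independent diagonal matrix of i.i.d.\ standard normals, and then applies the asymptotic-freeness result of Pastur--Vladimirova (their Theorem 2.1) together with the uniform first-moment bounds to conclude $\mathrm{ESD}(\Bar{A}_{N}+Y_{N})\Rightarrow\gamma_{0}\boxplus\gamma_{1}$. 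Both routes are legitimate and logically non-circular (Theorem~\ref{thm:multi_structure} does not rely on the corollary); yours is shorter as written but leans on the heavier general theorem, while the paper's direct argument in the constant-variance case avoids Theorem~\ref{thm:multi_structure} entirely and makes the appeal to free independence explicit.
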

The above result identifies the limiting measure in terms of additive free convolution of two measures.  The identification of the limiting measure can be achieved in the case when $W$ has a \textit{multiplicative structure}, that is
\begin{align}\label{eq: multi_structure}
    W(x,y)=r(x)r(y),\quad x,y\in[0,1]
\end{align}
for some continuous function $r:[0,1]\rightarrow[0,1]$. The statement is based on the theory of self-adjoint operators affiliated with a $W^{\star}$-probability space. It is important to note that we do not assume the initial variances to have a multiplicative structure but the multiplicative structure only appears in the limit. We recall some definitions mentioned in \citet{chakrabarty2018spectra}. For details of free probability of unbounded operators we refer to \citet{anderson2010introduction}.

Recall $(\mathcal A, \tau)$ is called a $W^\ast$ probability space if $\mathcal A$ is a closed (in weak operator topology) $C^\ast$ algebra of bounded operators on some Hilbert space and $\tau$ is a state. A densely defined operator $T$ is said to be affiliated to $\mathcal A$ if for every bounded measurable function $h$, $h(T)\in \mathcal A$. For an affiliated operator $T$, its law $\mathcal{L}(T)$ is the unique probability measure on $\mathbb R$ satisfying 
$$
    \tau(h(T))=\int_{\mathbb R}h(x)(\mathcal{L}(T))dx.
$$
For two or more self adjoint operators $T_{1},\cdots, T_{n}$, a description of their joint distribution is a specification of 
$$
    \tau(h_{1}(T_{i1}),\cdots, h_{k}(T_{ik})),
$$
for all $k\geq 1$, all $i_{1},\cdots i_{k}\in \{1,2,\cdots ,n\}$, and all bounded measurable functions $h_{1},\cdots, h_{k}$ from $\mathbb R$ to itself.


\begin{definition}
Let $(\mathcal{A},\tau)$ be a $W^{\star}$-probability space and $a_{1}, a_{2}\in\mathcal{A}$. Then $a_{1}$ and $a_{2}$ are freely independent if
$$
    \tau(p_{1}(a_{i_{n}}),\cdots, p_{n}(a_{i_{n}}))=0
$$
for all $n\geq 1$, all $i_{1},\cdots, i_{n}\in \{1,2\}$ with $i_{j}\neq i_{j+1}, j=1,\cdots,n-1$, and all polynomials $p_{1},\cdots, p_{n}$ in one variable satisfying 
$$
    \tau(p_{j}(a_{i_{j}}))=0,\quad j=1,\cdots, n
$$
For operators $a_{1},\cdots, a_{k}$ and $b_{1},\cdots, b_{m}$ affiliated with $\mathcal{A}$, the collections $(a_{1},\cdots, a_{k})$ and $(b_{1},\cdots b_{m})$ are freely independent if and only if 
$$
    p(h_{1}(a_{1}),\cdots h_{k}(a_{k})) \text{ and } q(g_{1}(b_{1}),\cdots, g_{m}(b_{m}))
$$
are freely independent for all bounded measurable $h_{1},\cdots, h_{k}$ and $g_{1},\cdots, g_{m}$, and all polynomials $p$ and $q$ in $k$ and $m$ non-commutative variables, respectively. It is immediate that the two operators in the above display are bounded, and hence belong to $\mathcal{A}$.
\end{definition}
Now we are ready to state our theorem.
\begin{theorem}\label{thm:multi_structure}
If $W$ is as in \eqref{eq: multi_structure}, then under \ref{itm:A1}-\ref{itm:A3} with the cut-metric convergence in \ref{itm:A3} replaced by $W_{N}\overset{L_{1}}{\rightarrow}W$ the limiting measure $\nu$ can be identified as, 
\begin{align}
    \nu=\mathcal{L}\left(r^{1/2}(T_{u})T_{s}r^{1/2}(T_{u})+\alpha r^{1/4}(T_{u})T_{g}r^{1/4}(T_{u})\right)
\end{align}
where 
$$
    \alpha=\left(\int_{0}^{1}r(x)dx\right)^{1/2}
$$
Here, $T_{g}$ and $T_{u}$ are commuting self-adjoint operators affiliated with a $W^{\star}$-probability space $(\mathcal{A},\tau)$ such that, for bounded measurable functions $h_{1},h_{2}$ from $\mathbb R$ to itself,
\begin{align}
    \tau\left(h_{1}(T_{g})h_{2}(T_{u})\right)=\left(\int_{-\infty}^{\infty}h_{1}(x)\phi(x)dx\right)\left(\int_{0}^{1}h_{2}(u)du\right)
\end{align}
with $\phi$ the standard normal density. Furthermore, $T_{s}$ has a standard semicircle law and is freely independent of $(T_{g},T_{u})$.
\end{theorem}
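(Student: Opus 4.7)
The plan is to realize $\Delta_{N}^{0}$ as the sum of two asymptotically free pieces: an off-diagonal Wigner-type part that is asymptotically $r^{1/2}(T_{u})T_{s}r^{1/2}(T_{u})$, and a diagonal Gaussian part that is asymptotically $\alpha r^{1/4}(T_{u})T_{g}r^{1/4}(T_{u})$. I would follow the Bryc--Dembo--Jiang replacement philosophy, adapted to the inhomogeneous setting: introduce $\widehat D_{N}$, an independent copy of the diagonal matrix $D_{N}-\mathbb{E}D_{N}$, independent of $A_{N}^{0}$, and verify that $\mathrm{ESD}(\Delta_{N}^{0})$ and $\mathrm{ESD}(A_{N}^{0}-\widehat D_{N}/\sqrt{N})$ share the same weak limit. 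The standard route is to match the two moment expansions $\frac{1}{N}\mathbb{E}\,\mathrm{tr}(\Delta_{N}^{0})^{2k}$ and $\frac{1}{N}\mathbb{E}\,\mathrm{tr}(A_{N}^{0}-\widehat D_{N}/\sqrt{N})^{2k}$; the crossed terms pairing an entry $Y_{ij}$ of $A_{N}^{0}$ with the same $Y_{ij}$ appearing inside a row-sum on the diagonal are of subleading order under \ref{itm:A1}--\ref{itm:A2}.

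Once the replacement is justified, the diagonal piece is handled by a direct CLT: for each $i$, the $i$-th diagonal entry of $\widehat D_{N}/\sqrt{N}$ is a centered sum of $N-1$ independent bounded-variance terms, so by \ref{itm:A2} it is asymptotically $\mathcal{N}(0, v_{i,N})$ with $v_{i,N}=\frac{1}{N}\sum_{k\ne i}\sigma_{ik}^{2}$. The strengthened $L^{1}$-convergence $W_{N}\to W=r\otimes r$ forces $v_{i,N}\to r(x_i)\int_{0}^{1}r(y)\,dy=\alpha^{2}r(x_i)$ when $i/N\to x$, so $\widehat D_{N}/\sqrt{N}$ converges in joint distribution with the deterministic diagonal $B_{N}=\mathrm{diag}(r^{1/2}(i/N))$ to $\alpha\,r^{1/2}(T_{u})T_{g}$ inside the commutative algebra generated by $T_{u},T_{g}$.

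For the off-diagonal piece, under the multiplicative form $W(x,y)=r(x)r(y)$ and $L^{1}$-convergence of $W_{N}$, I would replace $A_{N}^{0}$ in joint moments by $B_{N}M_{N}B_{N}$, where $M_{N}$ is a normalised Wigner matrix with approximately unit variance profile and is independent of $B_{N}$ and $\widehat D_{N}$. This substitution is controlled by the same combinatorial moment argument used for Theorem~\ref{thm:laplacian}: the leading contributions in $\frac{1}{N}\mathbb{E}\,\mathrm{tr}(\cdots)$ are indexed by non-crossing pair partitions (equivalently planar trees), and a homomorphism identity shows that the two profiles give the same contribution in the $L^{1}$ limit. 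Voiculescu-type asymptotic freeness of $M_{N}$ from the deterministic/Gaussian diagonal algebra then yields the joint limit: $A_{N}^{0}-\widehat D_{N}/\sqrt{N}$ converges in $*$-distribution to $r^{1/2}(T_{u})T_{s}r^{1/2}(T_{u})-\alpha\,r^{1/2}(T_{u})T_{g}$, with $T_{s}$ standard semicircular and free from $(T_{g},T_{u})$.

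The result follows upon noting two simplifications: since $T_{g}$ is a standard normal operator one has $-T_{g}\eid T_{g}$ jointly with $T_{u}$, so the sign in front of the diagonal piece is irrelevant for the law; and since $T_{u}$ and $T_{g}$ commute, $r^{1/2}(T_{u})T_{g}=r^{1/4}(T_{u})T_{g}r^{1/4}(T_{u})$, giving the symmetric form stated in the theorem. The main technical obstacle will be making the approximation $A_{N}^{0}\approx B_{N}M_{N}B_{N}$ rigorous enough at the level of joint $*$-moments with the diagonal algebra generated by $B_{N}$ and $\widehat D_{N}/\sqrt{N}$: classical asymptotic freeness of Wigner matrices from diagonal matrices is stated for iid variances, and extending it to a variance-profile Wigner requires reading the multiplicative structure $r(x)r(y)$ directly inside the tree-indexed moment expansion. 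This is where the upgraded $L^{1}$-convergence of $W_{N}$ (rather than mere cut-metric convergence) plays its essential role, by providing the entrywise control needed to compare $A_{N}^{0}$ with $B_{N}M_{N}B_{N}$ on each non-crossing partition contribution.
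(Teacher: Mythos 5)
Your proposal is correct and follows essentially the same route as the paper: decouple and Gaussianize the diagonal so the matrix becomes $\Bar{A}_{N}+Y_{N}$, use the $L^{1}$-convergence of $W_{N}$ to $r\otimes r$ to replace the variance profile by its multiplicative limit and factor the two pieces as $R_{N}G_{N}R_{N}$ and $\alpha R_{N}^{1/2}U_{N}R_{N}^{1/2}$ with $R_{N}=\mathrm{Diag}(\sqrt{Ng_{i}})$, then conclude by asymptotic freeness of the Wigner part from the commuting diagonal algebra as in \citet{chakrabarty2018spectra}. The only implementation differences are that the paper controls the profile replacement by normalized Hilbert--Schmidt (trace) perturbation bounds rather than joint $*$-moment matching, and handles the unbounded Gaussian diagonal via the truncation $U_{NK}$ before invoking freeness.
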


Such identification was first achieved in the results of \citet{chakrabarty2018spectra} in the case of inhomogeneous Erd\H{o}s-R\' enyi random graphs. We show that similar limit appears in the general case too. 
\begin{remark}
Under such multiplicative structure the limiting spectral distribution of the matrix $A_{N}$ can also be identified. If we keep the assumption \eqref{eq: multi_structure} on the limiting graphon $W$ and the empirical graphon $W_{N}$ corresponding to the variance profile as in Theorem \ref{thm:multi_structure}, then the limiting spectral distribution of the scaled and centered matrix $A_N^0$ is given by
\begin{align}\label{eq:multi_adj_limit}
    \mu=\mathcal{L}\left(r^{1/2}(T_{u})T_{s}r^{1/2}(T_{u})\right)
\end{align}
where $T_{u}$ and $T_{s}$ are as defined in Theorem \ref{thm:multi_structure}. One should note that the measure $\mu$ in \ref{eq:multi_adj_limit} is the same as the free multiplicative convolution of the standard semicircle law and the law of $r(U)$, where $U$ is a standard uniform random variable. 
\end{remark}

\subsection{Description of the moments}\label{sec:moment_description}
In this subsection, we briefly describe the moments of the limiting measure $\nu$. It turns out to be an interesting combinatorial problem to come up with an expression for the moments. To describe the limiting moments, we need to introduce some notions.
Fix $k$ in $2\mathbb N$. Consider the multiset $\{m_{1},m_{2},\cdots,m_{k},n_{1},n_{2},\cdots n_{k}\}\in \mathbb N$ such that $\sum_{p=1}^{k}(m_{p}+n_{p})=k$ and $m=\sum_{p=1}^{k}m_{p}$ is even. Then consider a rooted planar tree $T$ on $(\frac{m}{2}+1)$ vertices and take $\widetilde{i}=i_{1}i_{2}\cdots i_{m+1}$ to be the depth first search on $T$. Then we must have $i_{m+1}=i_{1}$. So for notational simplicity we identify $1$ by $m+1=1+\sum_{p=1}^{k}m_{p}$. Then consider $\{s_{1},\cdots, s_{\eta_{s}}\}\subseteq\{1,2,\cdots, k\}$ such that the indices in $\widetilde{i}$ corresponding to the set $\{1+\sum_{1}^{s_{1}}m_{p},\cdots, 1+\sum_{1}^{s_{\eta_{s}}}m_{p}\}$, that is $\{i_{1+\sum_{1}^{s_{1}}m_{p}},\cdots, i_{1+\sum_{1}^{s_{\eta_{s}}}m_{p}}\}$ represents the $s^{th}$ vertex of $T$. Observe that $\eta_{s}$ can be equal 0. Now suppose that $\sum_{j=1}^{\eta_{s}}n_{s_{j}}$ is even for all vertex $s$ in $T$. Then define $\widehat{n}_{s}=\frac{1}{2}\sum_{j=1}^{\eta_{s}}n_{s_{j}}$ (if $\eta_{s}$ is 0, then the sum is also 0). Construct a new graph $\widetilde{T}$ by attaching $\widehat{n}_{s}$ many vertices to the vertex $s$ of $T$, for all $s\in \{1,2,\cdots, \frac{m}{2}+1\}$. Only modification that we are doing to $T$ is by adding leaf nodes. Hence the modified graph is still a tree. An example of such modification at the vertex $s$ is shown in Figure \ref{fig:Modified_s}.\\

For $t\in \mathbb N$, define 
$$
    \mathbf{m}_{t}=\mathbb{E}\left[Z^{t}\right], \ Z\sim N(0,1).
$$
Then define the function
\begin{equation}\label{def:f}
    f(T)=
    \begin{cases}
    \prod_{s=1}^{\frac{m}{2}+1}\mathbf{m}_{2\widehat{n}_{s}} & \text{ if } 2\widehat{n}_{s} \text{ is even for all } s,\\
    0 & \text{ otherwise}.
    \end{cases}
\end{equation}

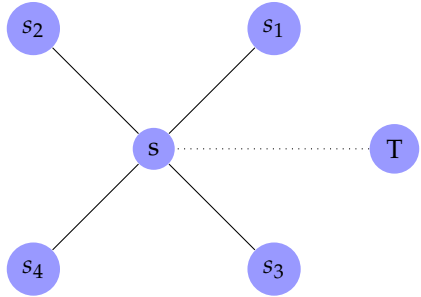
\begin{figure}[!ht]
\centering
\begin{tikzpicture}
  [scale=.8,auto=left,every node/.style={circle,fill=blue!40}]
  \node (n6) at (0,0) {s};
  \node (n5) at (2,2)  {$s_{1}$};
  \node (n4) at (-2,2) {$s_{2}$};
  \node (n3) at (-2,-2)  {$s_{4}$};
  \node (n2) at (2,-2)  {$s_{3}$};
  \node (n1) at (4,0) {T};

  \foreach \from/\to in {n6/n5,n6/n4,n6/n3,n6/n2}
    \draw (\from) -- (\to);
  \foreach \from/\to in {n1/n6}
    \draw[dotted] (\from) -- (\to);
\end{tikzpicture}
\caption{The modification of graph $T$ at vertex $s$ with $\widehat{n}_{s}=4$}\label{fig:Modified_s}
\end{figure}

\begin{example}
We provide an example of such an modification. Consider $k=12$, $\widetilde{m}=\{m_{i}\}_{i=1}^{12}$ such that $m_{i}=2$  for $i=1,2,4$ and $0$ otherwise and finally $\widetilde{n}=\{n_{i}\}_{i=1}^{12}$ such that $n_{i}=2$ for $i=1,2,5$ and $0$ otherwise. Then $m=\sum_{i=1}^{k}m_{i}=6$. Hence we consider the rooted planar tree $T_{4}$ as in Figure \ref{fig:Example_graph}.

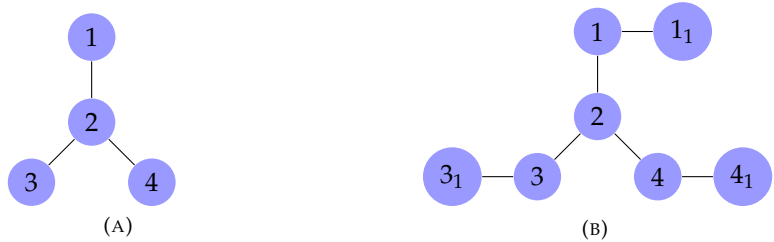
\begin{figure}%
    \centering
   \subfloat[][\centering\label{fig:Example_graph}]
   {\begin{tikzpicture}[scale=.8,auto=left,every node/.style={circle,fill=blue!40}]
  \node (n6) at (0,0) {$2$};
  \node (n5) at (0,1.414)  {$1$};
  \node (n4) at (1,-1) {$4$};
  \node (n3) at (-1,-1)  {$3$};
  \foreach \from/\to in {n6/n5,n6/n4,n6/n3}
    \draw (\from) -- (\to);
\end{tikzpicture}}%
    \hspace{0.2\textwidth}
   \subfloat[][\centering\label{fig: Example_modified}]
  {\begin{tikzpicture}
  [scale=.8,auto=left,every node/.style={circle,fill=blue!40}]
  \node (n6) at (0,0) {$2$};
  \node (n5) at (0,1.414)  {$1$};
  \node (n4) at (1,-1) {$4$};
  \node (n3) at (-1,-1)  {$3$};
  \node (n2) at (1.414,1.414)  {$1_{1}$};
  \node (n1) at (-2.414,-1) {$3_{1}$};
  \node (n7) at (2.414,-1) {$4_{1}$};
  \foreach \from/\to in {n6/n5,n6/n4,n6/n3,n5/n2,n4/n7,n3/n1}
    \draw (\from) -- (\to);
\end{tikzpicture}}%
    \caption{(a) Rooted planar tree $T_{4}$, (b) Modified tree $\widetilde{T}_{4}$}%
\end{figure}

Then consider the depth first search $1\rightarrow 2\rightarrow 3\rightarrow 2\rightarrow 4\rightarrow 2\rightarrow 1=i_{1}\rightarrow i_{2}\rightarrow i_{3}\rightarrow i_{4}\rightarrow i_{5}\rightarrow i_{6}\rightarrow i_{7}$. Now observe that 
\begin{align*}
    1+\sum_{i=1}^{j}m_{i}=
    \begin{cases}
        3 & j=1\\
        5 & j=2,3\\
        7 & \text{otherwise}
    \end{cases}
\end{align*}
Observe that by definition $i_{1}=i_{7}$ corresponds to the vertex $1$. Now let us look at $1^{st}$ vertex, that is vertex $1$. Observe that $i_{1+\sum_{l=1}^{j}m_{l}}$ falls on vertex $1$ if $4\leq j\leq 12$. Then by definition $\eta_{1}=9$ and the set $\{s_{1},\cdots,s_{\eta_{s}}\}$ for $s=1$ is $\{4,5,\cdots 12\}$. Then $\widehat{n}_{1}=\frac{1}{2}\sum_{j=1}^{9}n_{s_{j}}=1$. Then we look at vertex $2$. Observe that $i_{2},i_{4},i_{6}$ corresponds to vertex $2$. But $2,4,6 \not\in\{1+\sum_{l=1}^{j}m_{l}:\forall 1\leq j\leq 12\}$. Hence $\eta_{2}=0$. Lets look at $4^{rd}$ vertex. Observe that only $i_{1+\sum_{l=1}^{j}m_{l}}$ for $j=2,3$ falls on vertex 4. Then by definition $\eta_{4}=2$ and the set $\{s_{1},\cdots, s_{\eta_{s}}\}$ for $s=4$ is $\{2,3\}$. Then $\widehat{n}_{4}=1$. Similarly we can show that $\widehat{n}_{3}=1$. The modified graph is as in Figure \ref{fig: Example_modified}
\end{example}
One thing to note is that the modification is independent of the labeling of the vertices. It only depends upon the number of overlap certain indices have with the vertices.

Let $\mathcal{P}_{2k}$ be the multiset of all numbers $(m_{1},\cdots, m_{2k}, n_{1},\cdots, n_{2k})$ which appear when we expand $(a+b)^{2k}$ for two non-commutative variables $a$ and $b$ and write it as
$$(a+b)^{2k} = \sum_{(m_{1},\cdots, m_{2k}, n_{1},\cdots, n_{2k})\in \mathcal{P}_{2k}} \prod_{i=1}^{2k}a^{m_i}b^{n_i}.$$
Observe that $\sum_{j=1}^{2k}(m_{j}+n_{j})=2k$.
We identify the moments of the limiting measure $\nu$ through the following formula.
\begin{equation}\label{eq:moment-formula}
   \int x^{2k}d\nu=\sum_{\mathcal{P}_{2k}}\sum_{r=1}^{C_{\frac{m}{2}}}t\left(\widetilde{T}_{r}^{\frac{m}{2}+1},W\right)f\left(T_{r}^{\frac{m}{2}+1}\right)\one_{\{m\in 2\mathbb{N}\cup\{0\}\}},\ \int x^{2k+1}d\nu=0,\ \forall k\geq 0
\end{equation}
where $T_{r}^{q}$ denote the $r^{th}$ rooted planar tree on $q$ many vertices with  $\widetilde{T}_{r}^{q}$ the corresponding modification as stated in Section \ref{sec:moment_description}.

\begin{remark}
Since the expression of moments in \eqref{eq:moment-formula} look complicated, we elucidate by calculating the second and the fourth moments.\\
First, let us look at the second moment. The possible choices for the vector $(m_{1},m_{2},n_{1},n_{2})$ are $$(2,0,0,0), (1,0,1,0), (0,1,1,0) \text{ and } (0,0,2,0). $$ Observe that for the second and third choice, there wont be any contributions since $m=m_{1}+m_{2}$ is odd. For the first choice number of trees is $C_{m/2}=C_{1}=1$, that is a single edge ($T_{1}^{2}$). Since here $n_{1}=n_{2}=0$, then we don't need any modification of the tree. Thus 
\begin{align*}
    t\left(\widetilde{T}_{1}^{2},W\right)=\int Wdxdy\text{ and }f(T_{1})=1
\end{align*}
Now for the choice $(0,0,2,0)$, $m=0$ and hence number of trees are $C_{0}=1$. The tree is basically a single vertex $T_{1}^{1}$. Then the depth first search would  yield just $i_{1}=1$. Observe that $1+m_{1}=1+m_{1}+m_{2}=1$. So $\widehat{n}_{1}=\frac{1}{2}(n_{1}+n_{2})=1$. Hence the modified tree is same as $T_{1}^{2}$. Thus the contribution becomes 
\begin{align*}
    t(\widetilde{T}_{1}^{1},W)=t(T_{1}^{2},W)=\int W(x,y) dx\, dy\text{ and }f(T_{1}^{1})=1
\end{align*}
Hence the second moment is 
\begin{align*}
    2\int W(x,y)dx\, dy
\end{align*}
Going as in the second moment we can determine the contributory terms in the expression of the fourth moment and their corresponding contributions. We provide explicit calculations for a few terms, the rest follows similarly. Let us look at the term corresponding to $(2,0,0,0,2,0,0,0)$. Here $m=2$ and $C_{m/2}=C_{1}=1$. So the contributory rooted planar tree is a single edge $(T_{1}^{2})$. The depth first search would give the closed walk, $i_{1}\rightarrow i_{2}\rightarrow i_{3}=i_{1}$. Now observe that
$$
    1+\sum_{l=1}^{j}m_{l}=3,\quad j=1,2,3,4.
$$
Then the number of overlaps of the form $i_{1+\sum_{l=1}^{j}m_{l}}$ with the first vertex is $\eta_{1}=4$ and the values of such $j$ are $j=1,2,3,4$. Hence 
$$
    \sum_{j=1}^{4}n_{j}=2\implies \widehat{n}_{1}=1
$$
Then the modification of the first vertex is given by adding a single edge to the vertex $1$. Observe that no such overlap is possible for the second vertex. Hence there is no modification for the second vertex. So for this case the modified tree $\widetilde{T}_{1}^{2}$ is given as in Figure \ref{fig:modification_1} and the contribution from this tree would be
$$
    t\left(\widetilde{T}_{1}^{2},W\right)\mathbf{m}_{2}
$$
Next let us look at the term corresponding to(1,1,0,0,2,0,0,0). Here $m=2$ and $C_{m/2}=1$. So again the contributory rooted planar tree is $T_{1}^{2}$. The depth first search is same as the previous case. Observe that here
$$
    1+\sum_{l=1}^{j}m_{l}=
    \begin{cases}
        2 & j=1\\
        3 & j=2,3,4
    \end{cases}
$$
Then the number of overlaps of the form form $i_{1+\sum_{l=1}^{j}m_{l}}$ with the first vertex is $\eta_{1}=3$ and the values of such $j$ are $j=2,3,4$. But then it is easy to observe that $\widehat{n}_{1}=0$. Hence there is no modification of the first vertex. There is a single overlap with the second vertex given by $i_{1+m_{1}}$. Then $\widehat{n}_{2}=1$ and hence we attach a single leaf node to the second vertex. So here again the modified tree is the same as the previous case, but with different labeling, given in Figure \ref{fig: modified_2}. Going similarly for the other terms, the fourth moment is given by
$$
    (2+4\mathbf{m}_{2}+\mathbf{m}_{4})t(T_{1}^{3},W)
$$
where $T_{1}^{3}$ is the rooted planar tree on 3 vertices. (Note that there are 2 rooted planar trees on 3 vertices, but the homomorphism density corresponding to both are equal, hence we can take any one of the trees). 
\begin{figure}%
    \centering
    \subfloat[][\centering\label{fig:modification_1}]{{\begin{tikzpicture}
  [scale=.8,auto=left,every node/.style={circle,fill=blue!40}]
  \node (n6) at (0,0) {$1$};
  \node (n5) at (0,2)  {$2$};
  \node (n4) at (2,0) {$1_{1}$};

  \foreach \from/\to in {n6/n5,n6/n4}
    \draw (\from) -- (\to);
\end{tikzpicture}}}%
\hspace{0.2\textwidth}
    \subfloat[][\centering\label{fig: modified_2}]{{\begin{tikzpicture}
  [scale=.8,auto=left,every node/.style={circle,fill=blue!40}]
  \node (n6) at (0,0) {$2$};
  \node (n5) at (0,2)  {$1$};
  \node (n4) at (2,0) {$2_{1}$};

  \foreach \from/\to in {n6/n5,n6/n4}
    \draw (\from) -- (\to);
\end{tikzpicture}}}%
    \caption{Modified trees for the fourth moment}%
\end{figure}
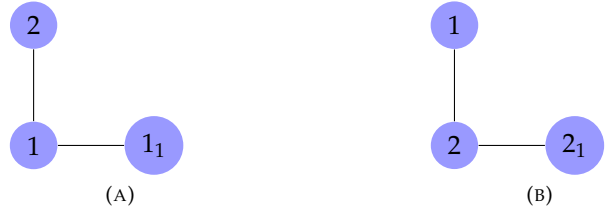

\end{remark}

\subsection{Spectral Norm of Laplacian}  In this subsection we discuss the asymptotics of the \textit{spectral norm} of the Laplacian matrix $\Delta_{N}$. In the case when entries are taken to be i.i.d. some results are known. The first order asymptotics, that is, the order of growth of the Laplacian was described in \citet{bryc2006spectral}.

For a $N\times N$ matrix $\mathbf{M}$, the spectral norm of $\mathbf{M}$ is defined as 
\begin{align*}
    \|\mathbf{M}\|=\sup_{\substack{\mathbf{x}\in\mathbb{R}^{N}\\ \|x\|_{2}=1}}\|\mathbf{Mx}\|_{2}
\end{align*}
where $\|\mathbf{x}\|_{2}=\sqrt{x_{1}^{2}+\cdots+ x_{N}^{2}}$ for $\mathbf{x}=(x_{1},x_{2},\cdots,x_{N})^{T}\in\mathbb{R}^{N}$.

Observe that using symmetry of $A_{N}$ and $\Delta_{N}$ we have
\begin{align}\label{eq:symmetrynorm}
    \|A_{N}\|=\max\{\lambda_{\max}(-A_{N}),\lambda_{\max}(A_{N})\},\ \|\Delta_{N}\|=\max\{\lambda_{\max}(-\Delta_{N}),\lambda_{\max}(\Delta_{N})\}
\end{align}
In order to proceed with our results we consider the following assumptions on the entries of $A_{N}$ in line with the assumptions in Section \ref{subsec:ESD of L}.
\begin{enumerate}[label=\textbf{S.\arabic*}]
\item \label{itm:A1_Norm} ({\bf Bounded variance})
Let $\sigma_{i,j}^2= \Var(X_{ij})$ and let $A_{1}=\inf_{i,j\geq 1}\sigma_{i,j}^{2}$ and $A_{2}=\sup_{i,j\geq 1}\sigma_{i,j}^{2}$. Suppose $A_1>0$ and $A_2<\infty$. Without loss of generality we can take $A_2\leq1$.

    \item \label{itm:A2_Norm} ({\bf Higher moments})
    There exists $\delta>0$ and $0<K<\infty$ such that $$\mathbb{E}\left[\left|X_{i,j}-\mathbb{E}X_{i,j}\right|^{6+\delta}\right]\leq K,\quad \text{for all } i,j\in\mathbb{N}.$$
    \item \label{itm:A3_Norm} ({\bf Graphon convergence}) 
    Consider the graph $$G^{\Sigma_{N}}=\bigg([N],\{(i,j):1\leq i\leq j\}, (\sigma_{i,j}^{2})_{1\leq i\leq j\leq N}\bigg)$$ and the corresponding empirical graphon $W_{N}$. Suppose there exists a graphon $W\in\mathcal{W}_{0}$ such that
    \[
    \delta_{\Box}(W_{N},W)\rightarrow 0.
    \]
\end{enumerate}
The assumption \ref{itm:A3} is restated here as assumption \ref{itm:A3_Norm} for convenience. It is needed for the convergence of the ESD of adjacency matrix $A_{N}$ to a compactly supported probability measure.
\begin{remark}It can be easily seen that assumption \ref{itm:A2_Norm} implies assumption \ref{itm:A2} (Lindeberg's Condition) for any constant $\eta>0$, that is
    \begin{align}
        \lim_{n\rightarrow\infty}\frac{1}{N^{2}}\sum_{1\leq i,j\leq N}\mathbb{E}\left[|X_{i,j}-\mathbb{E}X_{i,j}|^{2}\mathbf{1}\left(|X_{i,j}|\geq \eta\sqrt{N}\right)\right]=0, \quad \forall\eta>0
    \end{align}
\end{remark}
Under the above set of assumptions we have the following result on spectral norm of the Laplacian matrix $\Delta_{N}$.
\begin{theorem}\label{thm:spectral_norm_bound}
Suppose that $A_{1}=\inf_{i,j\geq 1}\sigma_{i,j}^{2}$ and $A_{2}=\sup_{i,j\geq 1}\sigma_{i,j}^{2}$, and $\mathbb{E}X_{i,j}=0$ for all $1\leq i\leq j$. Then under the above set of assumptions we have
\begin{equation}\label{eq:upperlowerbounds}
    \liminf_{N\rightarrow\infty}\frac{\|\Delta_{N}\|}{\sqrt{2N\log N}}\geq A_{1}^{\frac{1}{2}}\ a.s., \text{ and } \limsup_{N\rightarrow\infty}\frac{\|\Delta_{N}\|}{\sqrt{2N\log N}}\leq (2A_{2})^{\frac{1}{2}}\ a.s.
\end{equation}

\end{theorem}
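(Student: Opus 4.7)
The plan is to decompose $\Delta_N = \tilde A_N - D_N$, where $\tilde A_N$ is the off-diagonal part of $A_N$ and $D_N$ is the diagonal matrix of row sums, $D_N(i,i) = \sum_{k \neq i} X_{ik}$. By the triangle inequality,
\[
\bigl|\,\|\Delta_N\| - \|D_N\|\,\bigr| \leq \|\tilde A_N\|.
\]
Under \ref{itm:A1_Norm}--\ref{itm:A2_Norm}, $\tilde A_N$ is a generalised Wigner matrix with uniformly bounded variances and $(6+\delta)$-moments, so standard Wigner-type spectral norm estimates give $\|\tilde A_N\| = O(\sqrt N)$ almost surely. Since $\sqrt N = o\bigl(\sqrt{2 N \log N}\bigr)$, this term is negligible at the relevant scale, and the problem reduces to analysing $\|D_N\| = \max_{1 \le i \le N} |S_i|$ with $S_i := \sum_{k \neq i} X_{ik}$.

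Each $S_i$ is a sum of $N-1$ centered independent variables with variance $\sum_{k \ne i} \sigma_{ik}^2 \in [(N-1)A_1,(N-1)A_2]$. Following the strategy of \citet{bryc2006spectral}, I would invoke a Sakhanenko/KMT-type strong Gaussian approximation---enabled precisely by the $(6+\delta)$-moment condition \ref{itm:A2_Norm}---to couple (on a possibly enlarged probability space) each $S_i$ to a centered Gaussian $G_i$ with $\Var(G_i) = \Var(S_i)$ and
\[
\max_{1 \le i \le N} |S_i - G_i| = o\bigl(\sqrt{N \log N}\bigr) \quad \text{a.s.}
\]
Thus the estimation of $\max_i |S_i|$ transfers to that of $\max_i |G_i|$, a maximum of $N$ jointly Gaussian variables whose pairwise covariances $\Cov(G_i,G_j) = \sigma_{ij}^2$ are $O(1)$ while the variances are of order $N$; the correlations are therefore $O(N^{-1/2})$ and asymptotically negligible.

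For the upper bound, a Gaussian-tail union bound using $\Var(G_i) \le (N-1)A_2$ yields $\max_i |G_i| \le (2 A_2 N \log N)^{1/2}(1+o(1))$ a.s.; combined with the $O(\sqrt N)$ contribution from $\|\tilde A_N\|$ and the coupling error this gives the stated upper bound, where the extra $\sqrt{2}$ in the constant $(2A_2)^{1/2}$ absorbs the conservative combination of the two terms. For the lower bound, $\|\Delta_N\| \ge \max_i |e_i^\top \Delta_N e_i| = \max_i |S_i|$, and one needs a matching lower extreme-value bound for $\max_i |G_i|$ with $\Var(G_i) \ge (N-1)A_1$. The principal obstacle is the cross-row dependency: $S_i$ and $S_j$ share the variable $X_{ij}$, so classical extreme-value theory for independent Gaussians does not apply verbatim. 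This is handled either by restricting to a suitably chosen sub-family of half-row sums $S_i^- := \sum_{k < i} X_{ik}$, which are mutually independent, or by a normal-comparison (Slepian-type) argument exploiting the $O(N^{-1/2})$ pairwise correlations to show that $\max_i |G_i|$ behaves as if the $G_i$ were independent. Either route delivers $(2 A_1 N \log N)^{1/2}(1-o(1))$, matching the claimed lower bound with constant $A_1^{1/2}$.

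The main technical difficulty, as highlighted, is the cross-row dependency of $(S_i)_i$ together with the inhomogeneity of variances. The $(6+\delta)$-moment assumption is precisely tuned so that Sakhanenko's theorem delivers a coupling with polynomially small error probabilities uniform in $i$, allowing a Borel--Cantelli argument to conclude almost sure convergence. Note that assumption \ref{itm:A3_Norm} plays no direct role here; it enters only through the underlying Wigner-type results used to control $\|\tilde A_N\|$.
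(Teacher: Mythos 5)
Your proposal follows essentially the same route as the paper's proof: triangle inequality to reduce $\|\Delta_N\|$ to the diagonal of row sums, a Wigner-type bound showing $\|A_N\|=o(\sqrt{N\log N})$ a.s., a Sakhanenko-type strong Gaussian approximation row by row (this is exactly Lemma 2.1 of \citet{bryc2006spectral}), a union bound with Gaussian tails and Borel--Cantelli for the upper bound, and a decoupling into mutually independent partial row sums for the lower bound (the paper splits the columns at $k_N=\lfloor N/\log N\rfloor$ and takes rows $i\le k_N$, which is the same device as your half-row sums restricted to rows of near-full variance). One correction: the gap between the constants $A_1^{1/2}$ and $(2A_2)^{1/2}$ does not come from ``combining the two terms'' (both $\|A_N\|$ and the coupling error are $o(\sqrt{N\log N})$); it comes from the Borel--Cantelli summability requirement in the union bound, which forces the threshold $2\sqrt{A_2N\log N}$ rather than the sharp $\sqrt{2A_2N\log N}$ --- your claimed a.s.\ bound $\max_i|G_i|\le(2A_2N\log N)^{1/2}(1+o(1))$ does not follow from a plain union bound, but the weaker bound the theorem actually asserts does (and the pairwise correlations of the row sums are $O(1/N)$, not $O(N^{-1/2})$).
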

We are able to identify the in probability limits under an additional assumption, which we state below
\begin{enumerate}[resume,label=\textbf{S.\arabic*}]
    \item \label{itm:A4_Norm} ({\bf Uniform row stochasticity}) For some $0<r\leq 1$,
    $$
        \lim_{N\rightarrow\infty}\sup_{i\in\mathbb{N}}\left|\frac{1}{N}\sum_{j=1}^{n}\sigma_{i,j}^{2}-r\right|=0.
    $$
    
\end{enumerate}
Recall that by assumption \ref{itm:A1_Norm} we have taken $A_{2}\leq 1$, hence we can take $0<r\leq 1 $. Then we have the following theorem
\begin{theorem}\label{thm: Normidentifylim}
Under the assumptions \ref{itm:A1_Norm}-\ref{itm:A4_Norm} we have
$$
    \lim_{N\rightarrow\infty}\frac{\|\Delta_{N}\|}{\sqrt{2N\log N}}=\sqrt{r}\ \text{ in probability}
$$
where $\|\cdot\|$ is the spectral norm.
\end{theorem}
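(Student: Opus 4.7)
The plan is to decompose $\Delta_N = A_N + D_N$, where $D_N$ is the diagonal matrix with $D_N(i,i) = -S_i$ and $S_i := \sum_{k\ne i} X_{i,k}$, and to argue that $\|\Delta_N\|$ is asymptotically driven by $\max_{1\le i\le N}|S_i|$. Since $D_N$ is diagonal one has $\|D_N\| = \max_i |S_i|$, and the triangle inequality gives $\bigl|\|\Delta_N\|-\|D_N\|\bigr|\le \|A_N\|$. A Bai--Yin type spectral-norm bound for generalised Wigner matrices with a uniformly bounded variance profile and finite $(6+\delta)$-th moments (ensured by \ref{itm:A1_Norm} and \ref{itm:A2_Norm}) yields $\|A_N\| = O_P(\sqrt N)$, hence $\|A_N\|/\sqrt{2N\log N}\to 0$ in probability, and the theorem reduces to
\begin{equation*}
  \frac{\max_{1\le i\le N}|S_i|}{\sqrt{2N\log N}}\longrightarrow \sqrt{r}\qquad\text{in probability.}
\end{equation*}

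Set $\sigma_{N,i}^2 := \Var(S_i)=\sum_{k\ne i}\sigma_{i,k}^2$. Assumption \ref{itm:A4_Norm} forces $\sigma_{N,i}^2/N \to r$ uniformly in $i$. If each $S_i$ could be replaced by an independent Gaussian $G_i\sim N(0,\sigma_{N,i}^2)$, then writing $G_i = \sigma_{N,i} Z_i$ with $Z_i$ i.i.d.\ standard normal, the classical fact that $\max_i|Z_i|/\sqrt{2\log N}\to 1$ almost surely, combined with $\sigma_{N,i}/\sqrt{N}\to\sqrt r$ uniformly, would immediately produce the limit $\sqrt r$ via the sandwich $(\min_i\sigma_{N,i}/\sqrt N)\max_i|Z_i|\le \max_i(\sigma_{N,i}/\sqrt N)|Z_i|\le (\max_i\sigma_{N,i}/\sqrt N)\max_i|Z_i|$.

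To realise this replacement, I would invoke a strong Gaussian approximation of Sakhanenko/Koml\'os--Major--Tusn\'ady type, applied row by row. Assumption \ref{itm:A2_Norm} provides the uniform $(6+\delta)$-th moment bound required to couple each centred row sum $S_i$ (a sum of $N-1$ independent terms) to a Gaussian $G_i\sim N(0,\sigma_{N,i}^2)$ with an explicit polynomial tail on $|S_i - G_i|$. After a union bound over $i=1,\ldots,N$ this yields $\max_i|S_i-G_i|=o_P(\sqrt{N\log N})$. This is essentially the mechanism employed in \citet{bryc2006spectral} in the homogeneous case; the uniform control in \ref{itm:A1_Norm}--\ref{itm:A2_Norm} is what permits the extension to the inhomogeneous variance profile. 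One subtlety is that $S_i$ and $S_j$ share the summand $X_{i,j}$, so the coupled Gaussians are not exactly independent. However, a single shared entry contributes at most $\max_{i,j}|X_{i,j}| = O_P(N^{2/(6+\delta)}) = o_P(\sqrt{N\log N})$ by a union bound using \ref{itm:A2_Norm}, and this discrepancy can be absorbed into the coupling error while recoupling to truly independent Gaussians.

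The main obstacle will be executing this simultaneous coupling carefully enough that the total error remains $o_P(\sqrt{N\log N})$ uniformly across all $N$ rows while keeping the row-to-row dependence bookkeeping under control. Once the coupling is in place, the upper bound on $\max_i|G_i|/\sqrt{2N\log N}$ follows from a Mills' ratio tail estimate together with a union bound, while the matching lower bound follows from a second-moment argument counting the rows $i$ with $|G_i|\ge (1-\varepsilon)\sqrt{2rN\log N}$; both steps are standard once independence and the uniformly correct variances $\sigma_{N,i}^2\sim rN$ have been secured.
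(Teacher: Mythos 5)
Your overall strategy coincides with the paper's (the paper omits the proof of Theorem \ref{thm: Normidentifylim}, noting it is the same as that of Theorem \ref{thm:spectral_norm_bound} with $A_1$ and $A_2$ both replaced by $r$ via \ref{itm:A4_Norm}): the triangle inequality $\bigl|\|\Delta_N\|-\|D_N\|\bigr|\le\|A_N\|$, the negligibility of $\|A_N\|$ at scale $\sqrt{N\log N}$, the row-wise Sakhanenko approximation, and the union bound plus Mills' ratio for the upper bound are exactly the paper's steps.

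The gap is in your lower bound. First, the row-by-row Sakhanenko coupling produces Gaussians $G_i$ whose \emph{joint} law is not controlled: each coupling is a separate construction, so a second-moment argument on $\#\{i:|G_i|\ge(1-\varepsilon)\sqrt{2rN\log N}\}$ has no covariance structure to work with. Second, your proposed repair of the dependence --- bounding the shared summand by $\max_{i,j}|X_{i,j}|=O_P(N^{2/(6+\delta)})$ and absorbing it into the coupling error --- does not decouple the row sums: every pair $(S_i,S_j)$ shares an entry and all $N$ row sums are functions of the same $\binom{N}{2}$ variables, so no removal of $o(\sqrt{N\log N})$ mass per row makes them independent. (A second-moment argument directly on the $S_i$, exploiting $\mathrm{Cov}(S_i,S_j)=\sigma_{i,j}^2=O(1)$ against $\Var(S_i)\sim rN$, could in principle work, but it requires a bivariate moderate-deviation estimate that you neither state nor derive.) The device you are missing, used in the proof of Theorem \ref{thm:spectral_norm_bound}, is to set $k_N=[N/\log N]$ and consider only the partial sums $\sum_{j=k_N+1}^{N}X_{i,j}$ for $i\le k_N$: these involve pairwise disjoint sets of entries (since $i\le k_N<j$) and are therefore genuinely independent, so the probability that their maximum stays below $(\sqrt2-\epsilon)\sqrt{rN\log N}$ factorises into a product, each factor is bounded away from $1$ by a Mills'-ratio lower bound applied after the marginal Gaussian coupling, and the discarded piece $\max_{i\le k_N}\bigl|\sum_{j\le k_N}X_{i,j}\bigr|$ is $o(\sqrt{N\log N})$ by the already-established upper bound at scale $k_N$.
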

The proof of Theorem \ref{thm: Normidentifylim} is similar to the proof of Theorem \ref{thm:spectral_norm_bound} and hence it will be omitted. We also look at the situation where the entries of the adjacency matrix have non-zero mean, that is $\exists N\geq 1$ and $1\leq i\leq j\leq N$ such that $\mathbb{E}A_{N}(i,j)\neq 0$. The limit of the spectral norm can be identified under certain assumptions which we outline below. It is an immediate corollary of Theorem~\ref{thm:spectral_norm_bound}.
\begin{enumerate}[resume,label=\textbf{S.\arabic*}]
    \item There exists a constant $m\geq 0$ such that
    \begin{align*}
        \lim_{N\rightarrow\infty}\frac{\left\|\mathbb{E}\Delta_{N}\right\|}{N}=m
    \end{align*}\label{itm:A5_Norm}
\end{enumerate}
Under the above additional assumption we have the following \textcolor{blue}{corollary},
\begin{corollary}\label{thm:spectral_norm_mean}
Under the set of assumptions \ref{itm:A1_Norm}-\ref{itm:A3_Norm}, along with the additional assumption \ref{itm:A5_Norm}, we have
\begin{align*}
    \lim_{N\rightarrow\infty}\frac{\left\|\Delta_{N}\right\|}{N}=m,\quad \text{a.s.}
\end{align*}
\end{corollary}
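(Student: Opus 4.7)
The strategy is an immediate application of the triangle inequality combined with Theorem~\ref{thm:spectral_norm_bound}. Write $\Delta_N = (\Delta_N - \mathbb{E}\Delta_N) + \mathbb{E}\Delta_N$. Then
\begin{align*}
\left|\frac{\|\Delta_N\|}{N} - \frac{\|\mathbb{E}\Delta_N\|}{N}\right| \leq \frac{\|\Delta_N - \mathbb{E}\Delta_N\|}{N},
\end{align*}
so it suffices to show that the right-hand side tends to zero almost surely; then assumption \ref{itm:A5_Norm} delivers the limit $m$.

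First I would observe that $\Delta_N - \mathbb{E}\Delta_N$ is exactly the Laplacian of the centered adjacency matrix whose $(i,j)$-entry is $X_{i,j}-\mathbb{E}X_{i,j}$. These centered entries have zero mean, their variance profile is unchanged (namely $\sigma_{i,j}^2$), they still satisfy the higher-moment bound \ref{itm:A2_Norm} (which is phrased for the centered entries already), and the graphon convergence assumption \ref{itm:A3_Norm} is unaffected because it depends only on the $\sigma_{i,j}^2$. Hence the centered Laplacian satisfies the full hypothesis set of Theorem~\ref{thm:spectral_norm_bound}, which gives
\begin{align*}
\limsup_{N\to\infty}\frac{\|\Delta_N - \mathbb{E}\Delta_N\|}{\sqrt{2N\log N}} \leq (2A_2)^{1/2}\quad \text{a.s.}
\end{align*}
In particular $\|\Delta_N - \mathbb{E}\Delta_N\|/N \to 0$ almost surely since $\sqrt{2N\log N}/N \to 0$.

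Combining this with \ref{itm:A5_Norm} via the triangle inequality displayed above yields $\|\Delta_N\|/N \to m$ almost surely. There is no real obstacle here since all the heavy lifting has already been done in Theorem~\ref{thm:spectral_norm_bound}; the only thing to verify is that the centered matrix inherits assumptions \ref{itm:A1_Norm}--\ref{itm:A3_Norm}, which is routine. The statement is essentially a restatement of Theorem~\ref{thm:spectral_norm_bound} in the non-centered case, made possible because the fluctuations of $\|\Delta_N - \mathbb{E}\Delta_N\|$ are of order $\sqrt{N\log N}$, which is negligible compared to the macroscopic scale $N$ of the mean.
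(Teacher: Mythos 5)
Your proposal is correct and follows exactly the paper's own argument: decompose $\Delta_N$ into the centered Laplacian plus $\mathbb{E}\Delta_N$, apply Theorem~\ref{thm:spectral_norm_bound} to the centered part (which satisfies \ref{itm:A1_Norm}--\ref{itm:A3_Norm} since those assumptions concern only the variances and the already-centered moments) to get $\|\Delta_N-\mathbb{E}\Delta_N\|/N\to 0$ a.s., and conclude by the triangle inequality together with \ref{itm:A5_Norm}. No gaps.
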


\section{Some examples}\label{sec:examples}
This section is devoted to examples and we show various models, mainly related to random networks which fall within the purview of the assumptions mentioned in the last section.
\subsection{Inhomogenous Erd\H{o}s-R\'enyi graphs}
We define the setting of inhomogenous Erd\H{o}s-R\'enyi graph following \citet{chakrabarty2018spectra}. Let $f:[0,1]^{2}\rightarrow[0,1]$ be a continuous function, satisfying 
\begin{align}
    f(x,y)=f(y,x)\quad\forall x,y\in[0,1].
\end{align}
Consider $(\varepsilon_{N}:N\geq 1)$ to be a sequence of fixed positive real numbers satisfying
\begin{align}\label{eq:inhomo_epsilon}
    \lim_{N\rightarrow\infty}\varepsilon_{N}=0,\quad\lim_{N\rightarrow\infty}N\varepsilon_{N}=\infty
\end{align}
Consider the random graph $\mathbb{G}_{N}$ on vertices $\{1,\cdots N\}$ where, for each $(i,j)$ with $1\leq i<j\leq N$, an edge is present between vertices $i$ and $j$ with probability $\varepsilon_{N}f\left(\frac{i}{N},\frac{j}{N}\right)$, independently of other pair of vertices. In particular, $\mathbb{G}_{N}$ is an undirected graph with no self edges and no multiple edges. Boundedness of $f$ ensures that $\varepsilon_{N}f\left(\frac{i}{N},\frac{j}{N}\right)\leq 1$ for all $1\leq i<j\leq N$ when $N$ is large enough. Without loss of generality we assume that $\varepsilon_{N}< 1$ for all $N\geq 1$.  If $f\equiv c$ with $c$ a constant, then $\mathbb{G}_{N}$ is the (homogeneous) Erd\H{o}s-R\'enyi graph with edge retention probability $\varepsilon_{N}c$. 
The adjacency matrix is denoted by $A_{N}$. Clearly, $A_{N}$ is a symmetric random matrix whose diagonal entries are zero and whose upper triangular entries are independent Bernoulli random variables, i.e.
\begin{align*}
    A_{N}(i,j)\sim \text{Ber}\left(\varepsilon_{N}f\left(\frac{i}{N},\frac{j}{N}\right)\right).
\end{align*}
Define $\Delta_{N}$ to be the Laplacian of the graph $\mathbb{G}_{N}$ corresponding to $A_{N}$ as in definition \ref{def: laplacian}.
The following theorem, which is a restatement of Theorem 1.2 in \citet{chakrabarty2018spectra} states  the existence of limiting spectral distribution of $\Delta_{N}$ under suitable scaling.

\begin{prop}\citet[Theorem 1.2]{chakrabarty2018spectra}\label{thm: Inhomothm}
There exists an unique, symmetric probability measure $\nu$ on $\mathbb{R}$ such that
$$
    \lim_{N\rightarrow\infty}\text{ESD}\left(\frac{1}{\sqrt{
    N\varepsilon_{N}}}\left(\Delta_{N}-D_{N}\right)\right)=\nu \text{ weakly in probability}
$$
where 
\begin{align*}
    D_{N}=\text{diag}\left(\mathbb{E}\Delta_{N}(1,1),\cdots, \mathbb{E}\Delta_{n}(N,N)\right)
\end{align*}
Furthermore, if $f\neq 0$, then the support of $\nu$ is unbounded.
\end{prop}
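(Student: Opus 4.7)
The plan is to deduce the statement from Theorem~\ref{thm:laplacian} by rescaling the adjacency matrix by $\varepsilon_N^{-1/2}$ and then absorbing the replacement of $\mathbb{E}\Delta_N$ by its diagonal $D_N$ via a Hoffman--Wielandt estimate.

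First I consider the rescaled adjacency $\widetilde A_N := \varepsilon_N^{-1/2} A_N$, whose entries $\widetilde X_{ij} = \varepsilon_N^{-1/2} X_{ij}$ remain independent. The bounded variance assumption \ref{itm:A1} holds because $\Var(\widetilde X_{ij}) = f(i/N, j/N)(1 - \varepsilon_N f(i/N, j/N)) \le \|f\|_\infty \le 1$. The Lindeberg condition \ref{itm:A2} is trivial: since $|\widetilde X_{ij} - \mathbb{E}\widetilde X_{ij}| \le \varepsilon_N^{-1/2}$ and $N\varepsilon_N \to \infty$, one has $\varepsilon_N^{-1/2} < \eta\sqrt{N}$ for all large $N$, so the truncated expectation vanishes. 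Finally, the graphon convergence \ref{itm:A3} follows from uniform continuity of $f$: the empirical graphon attached to $\sigma_{ij}^2 = f(i/N, j/N) + O(\varepsilon_N)$ converges in supremum norm (hence in cut-metric) to the graphon $f$.

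Applying Theorem~\ref{thm:laplacian} to $\widetilde A_N$ produces a unique symmetric probability measure $\nu$ such that $\text{ESD}\big((N\varepsilon_N)^{-1/2}(\Delta_N - \mathbb{E}\Delta_N)\big)$ converges weakly in probability to $\nu$. To pass from $\mathbb{E}\Delta_N$ to $D_N$, I write $R_N := (N\varepsilon_N)^{-1/2}(\mathbb{E}\Delta_N - D_N)$, which is the off-diagonal part of $(N\varepsilon_N)^{-1/2} \mathbb{E}A_N$. Then
\[
\|R_N\|_F^2 = \frac{1}{N\varepsilon_N}\sum_{i \ne j} \varepsilon_N^2 f(i/N, j/N)^2 \le N\varepsilon_N \|f\|_\infty^2.
\]
By the Hoffman--Wielandt inequality the averaged squared eigenvalue perturbation is at most $\|R_N\|_F^2 / N \le \varepsilon_N \|f\|_\infty^2 \to 0$, so the L\'evy distance between the ESDs of the two matrices vanishes and the in-probability limit $\nu$ is preserved.

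For the unbounded support, continuity of $f$ together with $f \not\equiv 0$ yields an open subset $U \subseteq [0,1]^2$ with $f > 0$ on $U$, and the last clause of Theorem~\ref{thm:laplacian} then guarantees the support of $\nu$ is unbounded. The main subtle point is the passage from $\mathbb{E}\Delta_N$ to $D_N$: the operator norm of the discarded off-diagonal mean matrix actually diverges, but its Frobenius norm is only of order $\sqrt{N\varepsilon_N}$, giving an averaged eigenvalue shift of order $\sqrt{\varepsilon_N} \to 0$, which is precisely what is needed to conclude ESD convergence.
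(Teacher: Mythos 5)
Your proposal is correct and follows essentially the same route as the paper: rescale by $\varepsilon_N^{-1/2}$, verify \ref{itm:A1}--\ref{itm:A3} for the resulting variance profile (the Lindeberg condition from the uniform bound $\varepsilon_N^{-1/2}=o(\sqrt N)$ and the graphon convergence from uniform continuity of $f$), and invoke Theorem~\ref{thm:laplacian}. The only difference is cosmetic: where you handle the replacement of $\mathbb{E}\Delta_N$ by its diagonal $D_N$ explicitly via the Hoffman--Wielandt/Frobenius-norm bound $\tfrac1N\|R_N\|_F^2\le\varepsilon_N\|f\|_\infty^2\to0$, the paper delegates exactly this reduction to Lemma~2.1 of \citet{chakrabarty2018spectra}, so your observation that the key point is the smallness of the Frobenius (not operator) norm is precisely the content of that cited lemma.
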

\begin{proof}
Observe that using Lemma 2.1 in \citet{chakrabarty2018spectra} we have 
\begin{align*}
    \lim_{N\rightarrow\infty}L\left(\frac{1}{\sqrt{N\varepsilon_{N}}}\text{ESD}(\Delta_{N}^{0}),\frac{1}{\sqrt{N\varepsilon_{N}}}\text{ESD}(\Delta_{N}-D_{N})\right)=0 \text{ in probability},
\end{align*}
where $L$ is the Levy metric\footnote{
The 
 \textit{L\'evy--Prokhorov metric} $L:\mathcal{P}(\mathbb{R})^{2}\rightarrow [0,+\infty)$ between two probability measures $\mu$ and $\nu$ is given by
\begin{align}
    L(\mu,\nu)=\inf\{\epsilon>0|\mu(A)\leq\nu(A^{\epsilon})+\epsilon\text{ and } \nu(A)\leq\mu(A^{\epsilon})+\epsilon\quad\forall A\in \mathcal{B}(\mathbb{R})\}
\end{align}
where $\mathcal{B}(\mathbb{R})$ denotes the Borel $\sigma$-algebra on $\mathbb{R}$ and $A^{\epsilon}$ is the $\epsilon$-neighbourhood of $A$.
} and
\begin{align*}
    \Delta_{N}^{0}=\Delta_{N}-\mathbb{E}\Delta_{N}.
\end{align*}
Hence it is enough to look at the limiting spectral distribution of $\frac{1}{\sqrt{N\varepsilon_{N}}}\text{ESD}(\Delta_{N}^{0})$. Define $B_{N}=\frac{1}{\sqrt{\varepsilon_{N}}}\left(A_{N}-\mathbb{E}A_{N}\right)$. Then $\frac{1}{\sqrt{\varepsilon_{N}}}\Delta_{N}^{0}$ is the centered Laplacian corresponding to $B_{N}$. \\
Observe that $\mathbb{E}B_{ij}^{(N)}=0$ and further it is easy to observe that due to $N\varepsilon_{N}\rightarrow\infty$ and $|B_{ij}^{(N)}|\leq \frac{2}{\sqrt{\varepsilon_{N}}}$, we must have
\begin{align*}
    \lim_{N\rightarrow\infty}\frac{1}{N}\sum_{1\leq i,j\leq N}\mathbb{E}\left[\left|B_{ij}^{(N)}\right|^{2}\one_{\left\{\left|B_{ij}^{(N)}\right|\geq \eta\sqrt{N}\right\}}\right]=0,\ \forall\eta>0
\end{align*}
Hence assumption \ref{itm:A2} is satisfied. Now consider $\Sigma^{(N)}$ to be the variance profile matrix of $B_{N}$. Then for $i\neq j$
\begin{align*}
    \Sigma_{ij}^{(N)}=\frac{\varepsilon_{N}f\left(\frac{i}{N},\frac{j}{N}\right)\left(1-\varepsilon_{N}f\left(\frac{i}{N},\frac{j}{N}\right)\right)}{\varepsilon_{N}}=f\left(\frac{i}{N},\frac{j}{N}\right)-\varepsilon_{N}f\left(\frac{i}{N},\frac{j}{N}\right)^{2}
\end{align*}
and $\Sigma_{ii}^{(N)}=0,\ 1\leq i\leq N$. Then observe that using the fact that $\sup_{N}\varepsilon_{N}<1$ and $|f|\leq 1$, we have $\Sigma_{ij}^{(N)}<C$ for some constant $C$ for $1\leq i,j\leq N$ and for all $N\geq 1$. Let $W_{N}$ be the empirical graphon corresponding to $\Sigma^{(N)}$. Let $$G^{N}=\bigg([N],\left\{ (i,j):1\leq i\leq j\right \}, \left(f\left(\frac{i}{N},\frac{j}{N}\right)\right)_{1\leq i\leq j\leq N}\bigg)$$ be the weighted graph on $[N]$ vertices with edge weights $\beta_{ij}=f\left(\frac{i}{N},\frac{j}{N}\right)$ for edge $i\neq j, 1\leq i,j\leq N$ and $\beta_{ii}=0, 1\leq i\leq N$. Let $\widehat{W}_{N}$ be the empirical graphon corresponding to $G^{N}$. Then we have
\begin{align}\label{eq: Inhomoorder1}
    W_{N}(x,y)=\widehat{W}_{N}(x,y)+o(1)
\end{align}
where $\mathrm{o}(1)$ is uniformly over all $(x,y)\in [0,1]^2$. Now observe that by definition $f$ is also a graphon and is uniformly continuous in $[0,1]^{2}$, hence given $\eta>0$ $\exists N_{\eta}\in\mathbb{N}$ such that $\forall N\geq N_{\eta}$,
\begin{align*}
    \left|f\left(\frac{i}{N},\frac{j}{N}\right)-f(x,y)\right|<\eta, \quad \forall(x,y)\in I_{i}\times I_{j}\quad\forall 1\leq i,j\leq N
\end{align*}
Then observe that 
\begin{align}
    \|\widehat{W}_{N}-f\|_{\Box}
    &=\sup_{S,T\subseteq[0,1]}\left|\int_{S\times T}\widehat{W}_{N}(x,y)-f(x,y)dxdy\right|\nonumber\\
    &\leq \int_{[0,1]^{2}}\left|\widehat{W}_{N}(x,y)-f(x,y)\right|dxdy\nonumber\\
    &=\sum_{i,j=1}^{N}\int_{I_{i}\times I_{j}}\left|\widehat{W}_{N}(x,y)-f(x,y)\right|dxdy\nonumber\\
    &=\sum_{i,j=1}^{N}\int_{I_{i}\times I_{j}}\left|f\left(\frac{i}{N},\frac{j}{N}\right)-f(x,y)\right|dxdy\leq \eta,\ \forall N\geq N_{\eta}\label{eq:inhomo_graphon_convg}
\end{align}
Hence we have $\delta_{\Box}\left(\widehat{W}_{N},f\right)\rightarrow 0$. Then using \eqref{eq: Inhomoorder1} we have $\delta_{\Box}\left(W_N,f\right)\rightarrow 0$. Thus assumptions \ref{itm:A1}-\ref{itm:A3} are satisfied.
Further when $f\not\equiv 0$ there exists an open set $U\subseteq[0,1]^{2}$ such that $f>0$ on $U$. The result follows  from Theorem \ref{thm:laplacian}. 
\end{proof}
Under a different situation, we are well equipped to look at the spectral norm of inhomogenous Erd\H{o}s-R\'enyi graph. Instead of the sparse setup of \eqref{eq:inhomo_epsilon},  we consider the dense situation
\begin{align}\label{eq:Inhomo_eps_norm}
    \lim_{N\rightarrow\infty}\varepsilon_{N}=\varepsilon_{\infty}
\end{align}
for some $\varepsilon_{\infty}\in (0,1)$. Further we assume that there exists a $\lambda>0$ such that
\begin{align}\label{eq:f_lb}
    \inf_{x,y\in [0,1]^2}f(x,y)>\lambda.
\end{align}
Once again we consider the matrix $B_{N}=((B_{ij}^{(N)}))_{i,j}$ defined in Proposition \ref{thm: Inhomothm}. Using the fact that $|B_{ij}^{(N)}|\leq \frac{2}{\sqrt{\vep_{N}}}$ and \eqref{eq:Inhomo_eps_norm} we must have
\begin{align*}
    \mathbb{E}\left[\left|B_{ij}^{(N)}\right|^{7}\right]\leq \frac{2^{7}}{\vep_{N}^{7/2}}\leq K
\end{align*}
for some $K>0$. Thus assumption \ref{itm:A2_Norm} is satisfied. Recall the variance profile matrix of $B_{N}$ given by
\begin{align*}
    \Sigma_{ij}^{(N)}=f\left(\frac{i}{N},\frac{j}{N}\right)-\varepsilon_{N}f\left(\frac{i}{N},\frac{j}{N}\right)^{2}
\end{align*}
Then using the fact that $f$ is a continuous function from a compact set, lower bounded by $\lambda>0$, we can easily see that $\Sigma_{ij}^{(N)}$ satisfies assumption \ref{itm:A1_Norm}. Further continuity along with compact support shows that the function $f^{2}$ is uniformly continuous. Using an argument similar to \eqref{eq:inhomo_graphon_convg}, it can be shown that the empirical graphon corresponding to $\Sigma_{ij}^{(N)}$ converges to
\begin{align*}
    W(x,y)=f(x,y)-\varepsilon_{\infty} f(x,y)^{2}
\end{align*}
in the cut metric. By definition, $W$ is a well defined graphon, and hence assumption \ref{itm:A3_Norm} is satisfied. Thus we have the following result using Theorem \ref{thm:spectral_norm_bound}.
\begin{prop}
Under the additional assumptions \eqref{eq:Inhomo_eps_norm} and \eqref{eq:f_lb}, we have
\begin{align*}
    \left\|\Delta_{N}\right\|=\Theta(\sqrt{N\log N})\ a.s.
\end{align*}
\end{prop}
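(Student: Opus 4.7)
The plan is to invoke Theorem~\ref{thm:spectral_norm_bound} applied to the centered and rescaled adjacency matrix $B_N = \varepsilon_N^{-1/2}(A_N - \mathbb{E}A_N)$, since the text preceding the proposition has already carried out essentially all the bookkeeping. I would first reconfirm that each of \ref{itm:A1_Norm}--\ref{itm:A3_Norm} holds for $B_N$ under the new hypotheses \eqref{eq:Inhomo_eps_norm} and \eqref{eq:f_lb}. Bounded higher moments (\ref{itm:A2_Norm}) follow from $|B_{ij}^{(N)}|\leq 2/\sqrt{\varepsilon_N}$ together with $\varepsilon_N \to \varepsilon_\infty > 0$. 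For \ref{itm:A1_Norm}, note that the variance profile
\[
\Sigma_{ij}^{(N)} \;=\; f\!\left(\tfrac{i}{N},\tfrac{j}{N}\right)\Bigl(1-\varepsilon_N f\!\left(\tfrac{i}{N},\tfrac{j}{N}\right)\Bigr)
\]
is bounded below by $\lambda(1-\varepsilon_N)$, which stays uniformly positive because of \eqref{eq:f_lb} and \eqref{eq:Inhomo_eps_norm}, and bounded above by $1$. Finally \ref{itm:A3_Norm} is the cut-metric convergence of the associated empirical graphon to $W(x,y)=f(x,y)-\varepsilon_\infty f(x,y)^2$, a genuine graphon by continuity of $f$, which is obtained by exactly the uniform-continuity argument used in \eqref{eq:inhomo_graphon_convg}.

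Having verified the hypotheses, I would read off the constants $A_1=\inf_{i,j}\sigma_{i,j}^{2}$ and $A_2=\sup_{i,j}\sigma_{i,j}^{2}$ for $B_N$, both of which lie in $(0,1]$. Theorem~\ref{thm:spectral_norm_bound} then delivers
\[
A_1^{1/2}\;\leq\;\liminf_{N\to\infty}\frac{\|\Delta_{B_N}\|}{\sqrt{2N\log N}}\;\leq\;\limsup_{N\to\infty}\frac{\|\Delta_{B_N}\|}{\sqrt{2N\log N}}\;\leq\;(2A_2)^{1/2}\quad\text{a.s.},
\]
where $\Delta_{B_N}$ denotes the Laplacian of $B_N$, giving $\|\Delta_{B_N}\|=\Theta(\sqrt{N\log N})$ almost surely.

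The last step is simply to translate this back to the Laplacian of $A_N-\mathbb{E}A_N$: since $\Delta_{B_N}=\varepsilon_N^{-1/2}(\Delta_N-\mathbb{E}\Delta_N)$ and $\varepsilon_N \to \varepsilon_\infty\in(0,1)$ is a bounded positive constant, multiplying by $\sqrt{\varepsilon_N}$ preserves the $\Theta(\sqrt{N\log N})$ rate. No further probabilistic input is required, so in truth there is no genuinely hard step here: the proposition is essentially a bookkeeping consequence of Theorem~\ref{thm:spectral_norm_bound}. The only subtle point worth flagging is that the ingredient one actually uses from \eqref{eq:f_lb} is the uniform positive lower bound on the variance profile, which is what prevents $A_1$ from collapsing to zero and guarantees the matching lower bound of order $\sqrt{N\log N}$; without \eqref{eq:f_lb} one would only obtain an upper bound.
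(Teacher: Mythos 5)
Your proposal is correct and follows essentially the same route as the paper: verify \ref{itm:A1_Norm}--\ref{itm:A3_Norm} for $B_N=\varepsilon_N^{-1/2}(A_N-\mathbb{E}A_N)$ using the $L^\infty$ bound $|B_{ij}^{(N)}|\leq 2/\sqrt{\varepsilon_N}$, the lower bound \eqref{eq:f_lb} on the variance profile, and uniform continuity of $f$ and $f^2$ for the cut-metric convergence, then apply Theorem~\ref{thm:spectral_norm_bound} and rescale by $\sqrt{\varepsilon_N}\to\varepsilon_\infty>0$. Your closing remark correctly identifies that \eqref{eq:f_lb} is what keeps $A_1$ bounded away from zero and hence yields the matching lower bound.
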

where $h(n)=\Theta(g(n))$ implies that for all large enough $n$, $c_{1}g(n)\leq h(n)\leq c_{2}g(n)$ for some positive constants $c_{1}$ and $c_{2}$. 
\subsection{Sparse W-random graphs}
Given a graphon $W:[0,1]^{2}\rightarrow[0,1]$, as stated in the definitions of \citet{borgs2019} a sequence of sparse random graphs $G_{N}$ can be generated in the following way. We choose $\vep_{N}$ to be a sparsity parameter such that $\sup_{N}\vep_{N}<1$ and $\vep_{N}\rightarrow 0$ and $N\vep_{N}\rightarrow\infty$. Let $\{X_{i}: i\geq 1\}$ be i.i.d. $U[0,1]$ consider the random graph $G_{N}$, where $i$ and $j$ are connected with probability $\vep_{N}W(X_{i},X_{j})$ independently for all $i\neq j$. The graph $G_{N}$ is called the \textit{sparse W-random graph} and is denoted by $\mathcal{G}(N,W,\vep_{N})$. 
The adjacency matrix of such random graphs were studied in \citet{zhu2020graphon}. In the following theorem we show the existence of an unique limiting spectral distribution of the Laplacian matrix. The proof essentially follows the ideas from~\citet{zhu2020graphon} and we present the ideas for completeness.
\begin{prop}\label{thm:sparse_W}
Let $\mathcal{G}(N,W,\vep_{n})$ be a sequence of sparse W-random graphs with adjacency matrix $A_{N}$ and the corresponding Laplacian matrix $\Delta_{N}$. Further assume that $W$ is regular, that is
\begin{align*}
	\int_{0}^{1}W(x,y)dy=d_{W}, \ \forall x\in [0,1]
\end{align*}
where $d_{W}$ is a constant only depending on the graphon $W$. Then the empirical spectral distribution of $\frac{\Delta_{N}-\mathbb{E}\Delta_{N}}{\sqrt{N\vep_{N}}}$ converges in probability to an unique symmetric probability measure $\nu$. Further if there exists an open set $U\subseteq [0,1]^{2}$ such that $W>0$ on $U$, then $\nu$ have unbounded support.
\end{prop}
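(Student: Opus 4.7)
The strategy is to condition on the i.i.d.\ uniform sample $\mathbf{X}=(X_1,\ldots,X_N)$ and then apply Theorem~\ref{thm:laplacian} to the conditionally-deterministic variance profile. Set $\widetilde A_N=A_N/\sqrt{\vep_N}$; conditionally on $\mathbf X$ the upper-triangular entries of $\widetilde A_N$ are independent with $\text{Var}(\widetilde A_N(i,j)\mid\mathbf X)=W(X_i,X_j)(1-\vep_N W(X_i,X_j))\le 1$, which verifies \ref{itm:A1}. Since $|\widetilde A_N(i,j)-\mathbb E[\widetilde A_N(i,j)\mid\mathbf X]|\le 1/\sqrt{\vep_N}$ and $N\vep_N\to\infty$, for every fixed $\eta>0$ the indicator in the sum \ref{itm:A2} is identically zero for all large $N$. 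For \ref{itm:A3}, the empirical graphon of the conditional variance profile differs uniformly by $O(\vep_N)$ from the empirical graphon of $W(X_i,X_j)$, and the latter converges in cut-metric to $W$ almost surely by the classical $W$-random sampling theorem (see \citet[Chapter~10]{lovasz2012large}).

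Conditionally on $\mathbf{X}$, Theorem~\ref{thm:laplacian} therefore yields that the empirical spectral distribution of
\[
\widetilde\Delta_N^{\,0}\;=\;\frac{\Delta_N-\mathbb E[\Delta_N\mid\mathbf X]}{\sqrt{N\vep_N}}
\]
converges weakly in probability to the measure $\nu$ described there, and averaging over $\mathbf X$ lifts this to unconditional convergence in probability. It remains to replace $\mathbb E[\Delta_N\mid\mathbf X]$ by $\mathbb E\Delta_N$. Setting $E=\mathbb E[\Delta_N\mid\mathbf X]-\mathbb E\Delta_N$, one checks that its off-diagonal entries equal $\vep_N(W(X_i,X_j)-d_W)$ and its diagonal entries are the negatives of the corresponding row sums. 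The regularity assumption $\int_0^1 W(x,y)\,dy=d_W$ forces $\int\!\!\int W=d_W$, so each $W(X_i,X_j)-d_W$ has mean zero and, crucially, also mean zero conditionally on either argument. A direct second-moment computation, exploiting the cancellation of cross-terms in the diagonal sum, gives $\mathbb E\|E\|_F^2=O(\vep_N^2 N^2)$. The Hoffman--Wielandt inequality then yields
\[
L\Big(\text{ESD}\big(\tfrac{\Delta_N-\mathbb E\Delta_N}{\sqrt{N\vep_N}}\big),\;\text{ESD}(\widetilde\Delta_N^{\,0})\Big)^{3}\;\le\;\frac{\|E\|_F^2}{N^{2}\vep_N}\;=\;O_P(\vep_N),
\]
where $L$ is the L\'evy metric; this vanishes in probability and, combined with the previous display, finishes the first claim.

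The unbounded-support assertion is inherited directly from Theorem~\ref{thm:laplacian}: the open set $U$ on which $W>0$ fulfils its hypothesis. The one genuinely delicate point is the centering exchange; without regularity the diagonal entries of $E$ would typically be of order $\vep_N N$ rather than $\vep_N\sqrt N$, and the scaling $\sqrt{N\vep_N}$ would not absorb them. Regularity is precisely what makes the row sums $\sum_j W(X_i,X_j)$ concentrate around $Nd_W$ with fluctuations of order only $\sqrt N$, and it is this $\sqrt N$ versus $N$ gap that allows the Hoffman--Wielandt estimate to close.
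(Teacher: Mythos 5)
Your proposal is correct and follows essentially the same route as the paper: condition on $X_1,\ldots,X_N$ to reduce to the deterministic-profile result (the paper defers this step to the proof of \citet[Theorem 5.1]{zhu2020graphon}, you invoke Theorem~\ref{thm:laplacian} directly after verifying \ref{itm:A1}--\ref{itm:A3}), and then exchange the conditional centering $\mathbb{E}[\Delta_N\mid\mathbf{X}]$ for $\mathbb{E}\Delta_N$ via the same second-moment computation, in which regularity kills the cross-terms $\mathbb{E}[(W(X_i,X_j)-d_W)(W(X_i,X_k)-d_W)]$ and yields an $O_P(\vep_N)$ bound on the normalized squared Frobenius norm, closed by the Hoffman--Wielandt/L\'evy-distance inequality \citet[Corollary A.41]{bai2010spectral}. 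Your identification of regularity as the hypothesis that makes the diagonal fluctuations $O(\sqrt{N})$ rather than $O(N)$ is exactly the point of the paper's computation.
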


\begin{proof}
	The proof will follow from the proof of~\citet[Theorem 5.1]{zhu2020graphon} as long as we can show the following,
\begin{align}\label{eq:obj_wrand}
	L\left(\frac{\Delta_{N}-\mathbb{E}\Delta_{N}}{\sqrt{N\vep_{N}}},\frac{\Delta_{N}-\mathbb{E}\left[\Delta_{N}\middle|X_{1},\cdots,X_{N}\right]}{\sqrt{N\vep_{N}}}\right)\overset{p}{\rightarrow}0
\end{align}
Observe that,
\begin{align}\label{eq:reg_iden}
	\mathbb{E}W(X_{1},X_{2})=\int\int W(x,y)dxdy=d_{W}\text{ and }\mathbb{E}W(X_{1},X_{2})W(X_{1},X_{3})=d_{W}^2
\end{align}
By definition,
\begin{align*}
	\frac{1}{N}&\Tr\left(\frac{\mathbb{E}\left[\Delta_{N}\middle| X_{1},X_{2},\cdots,X_{N} \right]-\mathbb{E}\Delta_{N}}{\sqrt{N\vep_{N}}}\right)^2\\
	&= \frac{\vep_{N}}{N^{2}}\left[\sum_{i=1}^{N}\left(\sum_{j=1,j\neq i}^{N}(W(X_{i},X_{j})-d_{W})\right)^2+\sum_{i=1}^{N}\sum_{j=1,j\neq i}^{N}\left(W(X_{i},X_{j})-d_{W}\right)^{2}\right]\\
	&= \frac{\vep_{N}}{N^{2}}\sum_{i=1}^{N}\left(\sum_{j=1,j\neq i}^{N}W(X_{i},X_{j})-d_{W}\right)^2+O(\vep_{N})\\
	&= \frac{\vep_{N}}{N^{2}}\sum_{i=1}^{N}\sum_{j\neq k, j,k\neq i}\left(W(X_{i},X_{k})-d_{W}\right)\left(W(X_{i},X_{j})-d_{W}\right) + O(\vep_{N})
\end{align*}
Then by \eqref{eq:reg_iden},
\begin{align*}
	\mathbb{E}\frac{1}{N}&\Tr\left(\frac{\mathbb{E}\left[\Delta_{N}\middle| X_{1},X_{2},\cdots,X_{N} \right]-\mathbb{E}\Delta_{N}}{\sqrt{N\vep_{N}}}\right)^2\\
	&= \frac{\vep_{N}}{N^{2}}\mathbb{E}\left[\sum_{i=1}^{N}\sum_{j\neq k, j,k\neq i}\left(W(X_{i},X_{k})-d_{W}\right)\left(W(X_{i},X_{j})-d_{W}\right)\right] + O(\vep_{N})\\
	&=  \frac{\vep_{N}}{N^{2}}\sum_{i=1}^{N}\sum_{j\neq k, j,k\neq i}\left(\mathbb{E}W(X_{i},X_{k})W(X_{i},X_{j})-d_{W}^2\right) + O(\vep_{N})=O(\vep_{N})\\
\end{align*}
Finally an application of \cite[Corollary A.41]{bai2010spectral} proves \eqref{eq:obj_wrand}.
\end{proof}

\subsection{Constrained Random Graph}Constraints in the random graph are very important especially in the context of Gibbs measure related to the graph. The constraints impose certain dependence in the graph. Let $S_{N}$ be the set of all simple graphs on $N$ vertices. One of the important classes is the so-called \textit{canonical ensemble} which puts a probability distribution on the set of simple graphs in a way that entropy is maximized and the average degree is equal to a prescribed value. There has been a recent interest in the breaking of ensemble equivalence study where the canonical ensembles play a crucial role. We refer to \citet{touchette2015equivalence,squartini2015breaking} for further details. The spectrum of the adjacency matrix for constrained random graph was derived in \citet{chakrabarty2018spectra} and here we derive similar results for the Laplacian matrix.

The canonical ensemble measure $P_{N}$ is the unique probability distribution on $S_{N}$ with the following two properties:
\begin{enumerate}
    \item [(I)] The \textit{average degree} of vertex $i$, defined by $\sum_{G\in S_{N}}k_{i}(G)P_{N}(G)$, equals $k_{i}^{\star}$ for all $1\leq i\leq N$, where $k^{\star}=(k_{i}^{\star}): 1\leq i\leq N)$ is a fixed sequence of positive integers of which we only require to be graphical.
    \item [(II)] The \textit{entropy} of $P_{N}$, defined by $-\sum_{G\in S_{N}}P_{N}(G)\log P_{N}(G)$, is maximal.
\end{enumerate}
It is known that because of property (II), $P_{N}$ takes the form 
\begin{align*}
    P_{N}(G)=\frac{1}{Z_{N}(\theta)^{\star}}\exp\left(-\sum_{i=1}^{N}\theta_{i}^{\star}k_{i}(G)\right),\quad G\in S_{N},
\end{align*}
where $\theta^{\star}=\left(\theta_{i}^{\star}: 1\leq i\leq N\right)$  is a sequence of real-valued Lagrange multipliers that must be chosen in such a way that property (I) is satisfied. The normalisation constant $Z_{N}(\theta^{\star})$, which depends on $\theta^{\star}$, is called the partition functions in Gibbs theory. The matching of property (I) uniquely fixes $\theta^{\star}$, namely , it turns out that ~(\citet{squartini2015breaking})
\begin{align*}
    P_{N}(G)=\prod_{1\leq i<j\leq N}^{N}(p_{ij}^{\star})^{A_{N}[G](i,j)}(1-p_{ij}^{\star})^{1-A_{N}[G](i,j)},\quad G\in S_{N},
\end{align*}
where $A_{N}[G]$ is the adjacency matrix of $G$, and $p_{ij}^{\star}$ represent a \textit{reparametrisation} of the Lagrange multipliers, namely,
\begin{align}\label{eq:pijxij}
    p_{ij}^{\star}=\frac{x_{i}^{\star}x_{j}^{\star}}{1+x_{i}^{\star}x_{j}^{\star}}, \quad 1\leq i\neq j\leq N,
\end{align}
with $x_{i}^{\star}=e^{-\theta_{i}^{\star}}$. Thus, we see that $P_{N}$ is nothing other than an inhomogenous Erd\H{o}s-R\'enyi random graph where the probability that vertices $i$ and $j$ are connected by an edge equals $p_{ij}^{\star}$. In order to match property (I), these probabilities must satisfy
\begin{align}\label{eq:kipij}
    k_{i}^{\star}=\sum_{j\neq i, j=1}^{N}p_{ij}^{\star}, \quad 1\leq i\leq N,
\end{align}
which constitutes a set of $N$ equations for the $N$ unknowns $x_{1}^{\star}, \cdots x_{N}^{\star}$.

In order to state the next result, we need to make some assumptions on the sequence $(k_{Ni}^{\star}: 1\leq i\leq N)$. For the sake of simplification the dependence on $N$ would be suppressed from notation. 

\begin{prop}\label{prop:constrained}
Let $(k_{i}^{\star}:1\leq i\leq N)$ be a graphical sequence of positive integers. Define 
\begin{align*}
    m_{N}=\max_{1\leq i\leq N}k_{i}^{\star}
\end{align*}
Assume that
\begin{align}\label{eq:m_Nlimit}
    \lim_{N\rightarrow \infty}m_{N}=\infty,\quad \lim_{N\rightarrow\infty}m_{N}/\sqrt{N}=0,
\end{align}
and consider the graph $G^{k_{N}}=\bigg([N],\{(i,j):1\leq i< j\}, (k_{i}^{\star}k_{j}^{\star}/m_{N})_{1\leq i< j\leq N}\bigg)$ and the corresponding empirical graphon $W_{N}^{k}$. Further assume there exists a graphon $W\in\mathcal{W}_{0}$ such that
\begin{align*}\label{eq:constrained_graphon}
    \delta_{\Box}(W_{N}^{k},W)\rightarrow 0.
\end{align*}
Let $x_{i}^{\star}$ and $p_{ij}^{\star}$ be determined by~\eqref{eq:pijxij} and~\eqref{eq:kipij}. Let $\Delta_{N}$ be the Laplacian matrix of an inhomogenous Erd\H{o}s--R\'enyi random graph on $N$ vertices, with $p_{ij}^{\star}$ the probability of an edge being present between vertices $i$ and $j$ for $1\leq i\neq j\leq N$. Then there exists an unique, symmetric probability measure $\nu_{k}$ on $\mathbb{R}$ such that
\begin{align}
    \lim_{N\rightarrow\infty} \text{ESD}\left(\frac{1}{\sqrt{N\varepsilon_{N}}}(\Delta_{N}-\mathbb{E}(\Delta_{N}))\right)=\nu_{k},\text{ weakly in probability}
\end{align}
where $\varepsilon_{N}=\frac{m_{N}^{2}}{\sum_{1\leq l\leq N}k_{l}^{\star}}$. Further if $W$ is positive on an open set in $[0,1]$, then support of $\nu_{k}$ is unbounded.
\end{prop}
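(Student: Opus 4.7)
The plan is to apply Theorem~\ref{thm:laplacian} to the rescaled adjacency matrix $B_N = \varepsilon_N^{-1/2}(A_N - \mathbb{E} A_N)$. A short calculation (as in the proof of Proposition~\ref{thm: Inhomothm}) shows that the centered Laplacian of $B_N$ is exactly $(N\varepsilon_N)^{-1/2}(\Delta_N - \mathbb{E}\Delta_N)$, so once we verify assumptions~\ref{itm:A1}--\ref{itm:A3} for $B_N$, the desired convergence of the ESD follows, with $\nu_k$ the symmetric probability measure produced by Theorem~\ref{thm:laplacian}, characterised via the moment formula~\eqref{eq:moment-formula} using the limiting graphon $W$.

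The first step is to invoke the asymptotic analysis of the Lagrange multipliers carried out in~\citet{chakrabarty2018spectra}: under~\eqref{eq:m_Nlimit} the unique solution $(x_i^\star)$ of the system~\eqref{eq:pijxij}--\eqref{eq:kipij} satisfies
\[
    x_i^\star = \frac{k_i^\star}{\sqrt{\sum_l k_l^\star}}(1+o(1)) \quad \text{uniformly in } 1 \le i \le N,
\]
which, substituted back into~\eqref{eq:pijxij}, yields
\[
    p_{ij}^\star = \frac{k_i^\star k_j^\star}{\sum_l k_l^\star}(1+o(1)) \quad \text{uniformly in } 1 \le i \neq j \le N.
\]
In particular $\sup_{i\neq j} p_{ij}^\star \to 0$, so $p_{ij}^\star(1-p_{ij}^\star) = p_{ij}^\star(1+o(1))$ uniformly.

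The second step is to read off the variance profile of $B_N$. For $i \neq j$ one has
\[
    (\sigma_{ij}^{(N)})^{2} := \Var(B_N(i,j)) = \frac{p_{ij}^\star(1-p_{ij}^\star)}{\varepsilon_N} = \frac{k_i^\star k_j^\star}{m_N^{2}}(1+o(1)),
\]
while $(\sigma_{ii}^{(N)})^2=0$. Since $k_i^\star \le m_N$, the entries are bounded by $1+o(1)$, giving~\ref{itm:A1}. By the uniformity of the error, the empirical graphon of $\Sigma^{(N)}$ differs from $W_N^k$ uniformly by $o(1)$, so $\delta_\Box(\Sigma^{(N)}\text{-graphon}, W_N^k) \to 0$; combined with the hypothesis $\delta_\Box(W_N^k, W)\to 0$ this gives~\ref{itm:A3}. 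For Lindeberg's condition~\ref{itm:A2}, the bound $|B_N(i,j)| \le 2/\sqrt{\varepsilon_N}$ together with $N\varepsilon_N \ge m_N \to \infty$ (which follows from $\sum_l k_l^\star \le N m_N$) makes the indicator $\{|B_N(i,j)| \ge \eta \sqrt N\}$ eventually empty for every fixed $\eta>0$.

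With \ref{itm:A1}--\ref{itm:A3} verified, Theorem~\ref{thm:laplacian} yields a unique symmetric limiting measure $\nu_k$ for $\mathrm{ESD}\bigl((N\varepsilon_N)^{-1/2}(\Delta_N - \mathbb{E}\Delta_N)\bigr)$, weakly in probability, and the unboundedness of its support under the open-set positivity of $W$ is the second assertion of the same theorem. The principal obstacle is not the routine verification of the graphon assumptions but the uniform approximation $p_{ij}^\star \approx k_i^\star k_j^\star/\sum_l k_l^\star$; since this was established in~\citet{chakrabarty2018spectra} under precisely the hypothesis~\eqref{eq:m_Nlimit}, we reuse their analysis rather than redo it here.
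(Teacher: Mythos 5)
Your proposal is correct and follows essentially the same route as the paper: rescale by $\sqrt{\varepsilon_N}$, invoke the uniform asymptotics $p_{ij}^\star=(1+o(1))k_i^\star k_j^\star/\sum_l k_l^\star$ for the Lagrange-multiplier reparametrisation, verify \ref{itm:A1}--\ref{itm:A3} for $B_N$ (boundedness from $k_i^\star\le m_N$, Lindeberg from $|B_N(i,j)|\le 2/\sqrt{\varepsilon_N}$ with $N\varepsilon_N\to\infty$, and cut-metric convergence of the variance-profile graphon to $W$ via the uniform $o(1)$ comparison with $W_N^k$), and then apply Theorem~\ref{thm:laplacian}. The only cosmetic differences are that you absorb the factor $1-p_{ij}^\star$ multiplicatively where the paper tracks it additively, and you cite \citet{chakrabarty2018spectra} rather than \citet{squartini2015breaking} for the estimate $\max_l x_l^\star=o(1)$; neither affects the argument.
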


\begin{proof}
Consider $A_{N}$ to be adjacency matrix of the inhomogenous Erd\H{o}s--R\'enyi graph defined in the proposition. Define 
\begin{align*}
    \sigma_{N}=\sum_{1\leq l\leq N}k_{l}^{\star},
\end{align*}
It is known that (\citet{squartini2015breaking})
\begin{align*}
    \max_{1\leq l\leq N}x_{l}^{\star}=o(1),
\end{align*}
in which case \eqref{eq:pijxij} and \eqref{eq:kipij} gives
\begin{align}
    x_{i}^{\star}=[1+o(1)]\frac{k_{i}^{\star}}{\sqrt{\sigma_{N}}},\quad p_{ij}^{\star}=[1+o(1)]\frac{k_{i}^{\star}k_{j}^{\star}}{\sigma_{N}}, \text{ as }N\rightarrow\infty
\end{align}
with the error term \textit{uniform} in $1\leq  i\neq j\leq N$. Then by definition
\begin{align}
    \vep_{N}=\frac{m_{N}^{2}}{\sigma_{N}}
\end{align}
It follows from \eqref{eq:m_Nlimit} that
\begin{align*}
    \lim_{N\rightarrow\infty}\vep_{N}=0,\quad\lim_{N\rightarrow\infty}N\vep_{N}=\infty
\end{align*}
Define $B_{N}=\frac{A_{N}}{\sqrt{\vep_{N}}}$, it is easy to observe that
\begin{align*}
    \left|B_{N}(i,j)-\mathbb{E}B_{N}(i,j)\right|\leq \frac{2}{\sqrt{\vep_{N}}}
\end{align*}
The bound above immediately implies $B_{N}$ satisfies the Lindeberg condition.
Remember that by definition the empirical graphon $W_{N}^{k}$ is such that
\begin{align*}
    W_{N}(x,y)=
    \begin{cases}
        \frac{k_{i}^{\star}k_{j}^{\star}}{m_{N}^{2}} & \text{if }(x,y)\in I_{i}\times I_{j},\ 1\leq i\neq j\leq N\\
        0 & \text{if }(x,y)\in I_{i}\times I_{i},\ 1\leq i\leq N
    \end{cases}
\end{align*}
For $1\leq i\neq j\leq N$,
\begin{align*}
    \text{Var}(B_{N}(i,j))
    &=\frac{1}{\vep_{N}}p_{ij}^{\star}(1-p_{ij}^{\star})\\
    &=\frac{k_{i}^{\star}k_{j}^{\star}}{m_{N}^{2}}\left(1-\vep_{N}\frac{k_{i}^{\star}k_{j}^{\star}}{m_{N}^{2}}\right)+o(1).
\end{align*}
Define an empirical graphon $\widetilde{W}_{N}^{k}$ as 
\begin{align*}
    \widetilde{W}_{N}^{k}(x,y)=
    \begin{cases}
        \text{Var}(B_{N}(i,j)) & \text{if }(x,y)\in I_{i}\times I_{j},\ 1\leq i\neq j\leq N\\
        0 & \text{if }(x,y)\in I_{i}\times I_{i},\ 1\leq i\leq N
    \end{cases}
\end{align*}
Then by definition, for all $1\leq i\neq j\leq N$,
\begin{align}\label{eq:WtildeWNconvg}
    \left|W_{N}^{k}(x,y)-\widetilde{W}_{N}^{k}(x,y)\right|\leq\left|\vep_{N}\left(\frac{k_{i}^{\star}k_{j}^{\star}}{m_{N}^{2}}\right)^{2}\right|+o(1)\overset{N\rightarrow\infty}{\longrightarrow}0,\quad \forall (x,y)\in I_{i}\times I_{j}
\end{align}
DCT combined with \eqref{eq:WtildeWNconvg} gives
\begin{align*}
    \delta_{\Box}(W_{N}^{k},\widetilde{W}_{N}^{k})\leq \int_{[0,1]^{2}}\left|W_{N}^{k}(x,y)-\widetilde{W}_{N}^{k}(x,y)\right|dxdy\rightarrow 0.
\end{align*}
Thus we have $\delta_{\Box}(\widetilde{W}_{N}^{k},W)\rightarrow 0$. Hence by an appeal to Theorem \ref{thm:laplacian} the proof is completed.
\end{proof}

\begin{remark}
A concrete example of a graphical sequence $(k_{i}^{\star}: 1\leq i\leq N)$ satisfying \eqref{eq:m_Nlimit}-\eqref{eq:constrained_graphon} can be constructed from the example considered in \citet[Remark 5.1]{chakrabarty2018spectra}. For $N\geq 1$, let
\begin{align*}
    k_{i}^{\star}=\lfloor i^{1/3}\rfloor,\quad 1\leq i\leq N
\end{align*}
Then Theorem 7.12 from \citet{van2009random} implies that $(k_{i}^{\star}:1\leq i\leq N)$ is graphical for $N$ large enough. Since $m_{N}=\lfloor N^{1/3}\rfloor$ it is immediate that \eqref{eq:m_Nlimit} holds. Define 
\begin{align*}
    W(x,y)=(xy)^{1/3},\quad (x,y)\in [0,1]^{2}
\end{align*}
Then using the $L^{1}$ bound on $\delta_{\Box}(W_{N}^{k},W)$, where $W_{N}^{k}$ is as defined in Proposition \ref{prop:constrained}, we can show that 
\begin{align*}
    \delta_{\Box}(W_{N}^{k},W)\rightarrow 0
\end{align*}
\end{remark}

Among other examples one can consider also the random block matrices and sparse stochastic block models considered in \citet{zhu2020graphon}. The results for the Laplacian matrices hold under the assumptions stated in Section 6 and Section 7 of \citet{zhu2020graphon}. To avoid repetitions we skip the results.

\section{Simulations}\label{sec:simulations}
This section is devoted to simulation study for a clearer picture of the above results. We consider two situations, firstly we consider the inhomogenous Erd\H{o}s-R\'enyi graph and in the second case, we generate the elements of adjacency matrices independently having a Gaussian distribution while respecting the symmetry constant.

For the inhomogenous Erd\H{o}s-R\'enyi graph we choose $f(x,y) = \sqrt{xy}$ i.e.~$f$ is a product of two same functions $r(t) = \sqrt{t}$ in $L^2[0,1]$. Then, in the Figure~\ref{fig: ErdosHist}(A) the eigenvalues of the centered Laplacian matrix are plotted under the scaling $\sqrt{N\vep_N}$ and hence it is clear that the ESD converges to a symmetric distribution. In the simulation
$N$ and $\vep_N$ are chosen to be $1000$ and $0.25$ respectively. In Figure~\ref{fig: ErdosHist} we show a comparison with the limiting spectral distribution under usual Erd\H{o}s-R\'enyi graph with edge retention probability $0.25$.\\
\begin{figure}[!ht]
    \centering
    \subfloat[LSD of $\frac{1}{\sqrt{N\vep_{N}}}\Delta_{N}^{0}$ for Inhomogenous Erd\H{o}s-R\'enyi graph ]{{\includegraphics[width=7cm]{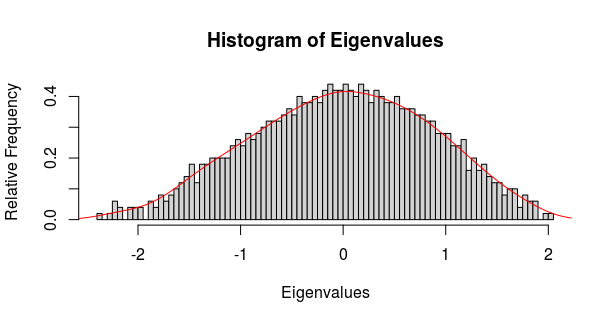} }}%
    \qquad
    \subfloat[LSD of $\frac{1}{\sqrt{N\vep_{N}}}\Delta_{N}^{0}$ for Erd\H{o}s-R\'enyi graph with $p=0.25$ ]{{\includegraphics[width=7cm]{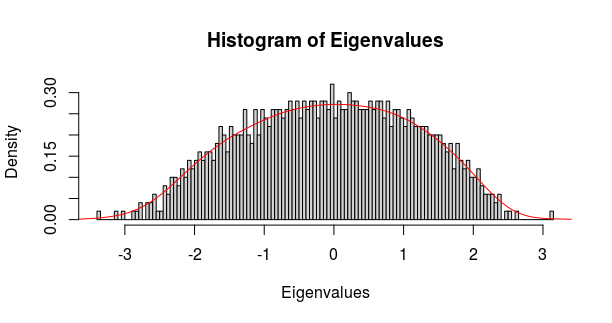} }}%
    \caption{ESD in both Inhomogenous and Homogenous Erd\H{o}s-R\'enyi graph}%
    \label{fig: ErdosHist}%
\end{figure}
For the Gaussian case, we consider two graphons $W$ as $W(x,y)=\sqrt{xy}$ and $W(x,y)=\frac{1}{2}(x(1-y)+y(1-x))$. Then the elements of the adjacency matrix are generated as $X_{ij}^{(n)}\sim\mathrm{N}\left(0,W\left(\frac{i}{n},\frac{j}{n}\right)\right)$. The graphons considerd here are uniformly continuous and hence by \eqref{eq:inhomo_graphon_convg}the empirical graphon constructed using above variance profile converges in the cut norm to $W$. We consider $n=1000$ so that the adjacency matrix is a $1000\times 1000$ matrix. In Figure~\ref{fig: Gaussian1} the eigenvalues of the Laplacian matrix under the scaling $\sqrt{n}$ and hence we can observe that ESD converges to a symmetric distribution.
\begin{figure}[!ht]
    \centering
    \subfloat[$W(x,y)=\sqrt{xy}$]{{\includegraphics[width=7cm]{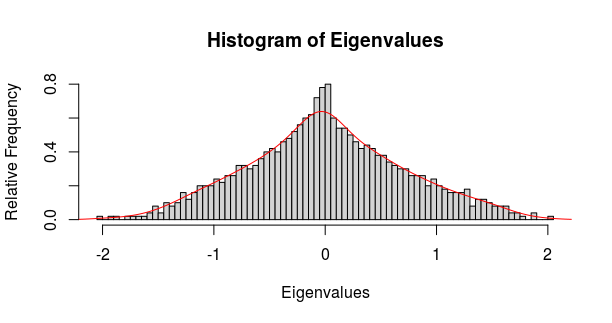} }}%
    \qquad
    \subfloat[$W(x,y)=\frac{1}{2}(x(1-y)+y(1-x))$]{{\includegraphics[width=7cm]{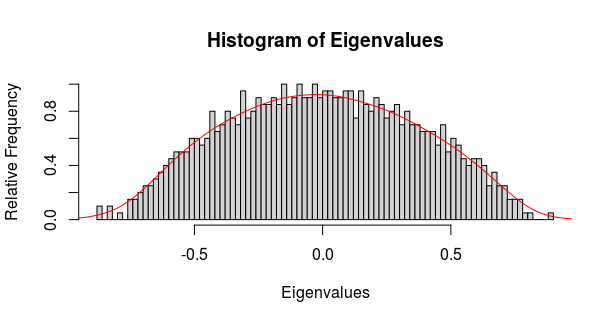} }}%
    \caption{ESD of $\frac{1}{\sqrt{n}}\Delta_{n}$ when edge weights are Gaussian}%
    \label{fig: Gaussian1}%
\end{figure}

\section{Proof of Theorems \ref{thm:laplacian}, \ref{thm:multi_structure} and Corollary \ref{corollary: dingjiang}}\label{sec:proofESD}
\subsection{Preparatory Lemmas for Theorem \ref{thm:laplacian}}
The proof of Theorem \ref{thm:laplacian} rely on several preparatory lemmas which are organised in this section. One of the crucial steps in studying the properties of ESD is to replace each entry by a Gaussian random variable, which we outline in the following lemma. 
 Let $\{G_{i,j},1\leq i\leq j\}$ be a family of i.i.d. standard Gaussian random variables. Define $N\times N$ matrices $A_{N}^{g}$ and $\Delta_{N}^{g}$ by 
\begin{align}
    &
    A_{N}^{g}(i,j)=\frac{\sigma_{i\wedge j,i\vee j}}{\sqrt{N}}G_{i\wedge j,i\vee j} \quad \forall 1\leq i,j\leq N \label{eq:ANgandDelNg}\\
    &
    \Delta_{N}^{g}(i,j)=
    \begin{cases}
        A_{N}^{g}(i,j) & \text{ if } i\neq j\\
        -\sum_{k\neq i,k=1}^{N} A_{N}^{g}(i,k) & \text{ if } i=j
    \end{cases}\label{eq:ANgandDelNg2}
\end{align}
Consider a three times continuously differentiable function $h:\mathbb{R}\rightarrow\mathbb{R}$ such that 
\begin{align*}
    \max_{0\leq j\leq 3}\sup_{x\in \mathbb{R}}\left|h^{(j)}(x)\right|<\infty
\end{align*}
where $h^{(j)}$ denotes the $j$-th derivative. For a $N\times N$ real symmetric matrix $M$ define the Stieltjes transform of the ESD of $M$ as
\begin{align*}
    H_{N}(M)=\frac{1}{N}\Tr\left(\left(M-zI_{N}\right)^{-1}\right), \,\, z\in \mathbb C\setminus \mathbb R.
\end{align*}
The next result shows that real and imaginary part of Steitjes transform of $A_N^0$ and $\Delta_N^0$ are close to the Gaussian counterparts. Since one knows that convergence of ESD is equivalent to showing the convergence of the corresponding Stieltjes transform, one can work with Gaussian random variables.  
\begin{lemma}\label{lemma:Gaussianisation}
(\textbf{Gaussianisation}) Let entries of $A_N^0$ satisfy the assumptions $\ref{itm:A1}$ and $\ref{itm:A2}$ and $A_N^g $ and $\Delta_N^g$ be defined as in \ref{eq:ANgandDelNg} and \ref{eq:ANgandDelNg2} then
\begin{align}
    & \lim_{N\rightarrow\infty} \mathbb{E}\left[h\left(\mathcal{R}H_{N}\left(\Delta_{N}^{g}\right)\right)-h\left(\mathcal{R}H_{N}\left(\Delta_{N}^{0}\right)\right)\right]=0\label{eq:Gaussianisation 3}\\
    & \lim_{N\rightarrow\infty} \mathbb{E}\left[h\left(\mathcal{I}H_{N}\left(\Delta_{N}^{g}\right)\right)-h\left(\mathcal{I}H_{N}\left(\Delta_{N}^{0}\right)\right)\right]=0\label{eq:Gaussianisation 4}
\end{align}
where $\mathcal{R}$ and $\mathcal{I}$ denotes the real and imaginary parts respectively. Similar statement holds true for $A_N^{0}$ and $A_N^g$.
\end{lemma}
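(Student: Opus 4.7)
The plan is a Lindeberg-type swap applied to the independent upper-triangular entries. Enumerate $\{(i,j):1\le i\le j\le N\}$ arbitrarily and define an interpolating sequence of Laplacians $\Delta_N^{(0)},\Delta_N^{(1)},\ldots$ in which the first $k$ centered entries $\xi_{ij}:=X_{ij}-\mathbb{E}X_{ij}$ have been replaced by independent $\sigma_{ij}G_{ij}$ with the matching variance. Then $\Delta_N^{(0)}=\Delta_N^0$ and the last member has the distribution of $\Delta_N^g$; it suffices to show that each single-entry swap changes $\mathbb{E}[h(\mathcal{R}H_N(\cdot))]$ by $o(N^{-2})$ uniformly. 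A crucial structural observation is that perturbing a single off-diagonal entry $X_{ij}$ by an amount $\delta$ changes $\Delta_N$ at exactly four positions $(i,i),(i,j),(j,i),(j,j)$ and the resulting perturbation equals the \emph{rank-one} matrix $-\delta(e_i-e_j)(e_i-e_j)^{\top}/\sqrt{N}$; diagonal entries of $A_N$ do not appear in $\Delta_N$ and need no swap. For the $A_N^0$ versus $A_N^g$ statement, the perturbation is instead the rank-two $\delta(e_ie_j^{\top}+e_je_i^{\top})/\sqrt{N}$, and the argument is entirely analogous and slightly simpler.

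Writing $u=e_i-e_j$, $V=-uu^{\top}$, $R_M=(M-zI_N)^{-1}$ and $\Delta_N^{(k-1)}=M+(\xi/\sqrt{N})V$, Sherman--Morrison gives
\[
H_N\!\left(M+\tfrac{t}{\sqrt{N}}V\right)-H_N(M)\;=\;\frac{t}{N\sqrt{N}}\cdot\frac{u^{\top}R_M^{2}u}{1-(t/\sqrt{N})\,u^{\top}R_Mu},
\]
which, because $\|R_M\|\le |\mathcal{I}z|^{-1}$ and $\|u\|^2=2$, is analytic for $|t|\le \sqrt{N}|\mathcal{I}z|/4$ with $p$-th derivative at $t=0$ bounded by $C_p(\mathcal{I}z)/N$. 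Setting $\phi(\xi)=h(\mathcal{R}H_N(M+(\xi/\sqrt{N})V))$, the chain rule together with boundedness of $h',h'',h'''$ yields
\[
|\phi'(\xi)|\le CN^{-3/2},\qquad |\phi''(\xi)|\le CN^{-2},\qquad |\phi'''(\xi)|\le CN^{-5/2}
\]
throughout the analytic range. Taylor-expand $\phi$ to third order around $0$ and do the same with $\xi$ replaced by $\sigma_{ij}G_{ij}$; taking expectations, the zeroth-order terms cancel (same $M$), the linear terms vanish by $\mathbb{E}\xi=0=\mathbb{E}(\sigma_{ij}G_{ij})$, and the quadratic terms cancel because $\mathbb{E}\xi^2=\sigma_{ij}^2=\mathbb{E}(\sigma_{ij}G_{ij})^2$. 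Only the cubic Taylor remainders survive.

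The Gaussian-side cubic remainder totals $\sum_{i\le j}C\sigma_{ij}^{3}/N^{5/2}=O(N^{-1/2})$ since $\sigma_{ij}\le 1$. On the original side, the naive bound $C\mathbb{E}|\xi|^3/N^{5/2}$ may be infinite, so one must interlace it with the second-order estimate $|\mathbb{E}[\phi(\xi)-\phi(0)]|\le C\mathbb{E}\xi^2/N^2$ (valid because $\mathbb{E}\xi=0$). This yields the per-swap control $C\,\mathbb{E}[\min(\xi^2/N^2,\,|\xi|^3/N^{5/2})]$, which on summing over the $O(N^2)$ swaps and splitting at a cutoff $|\xi|=\eta\sqrt{N}$ gives
\[
\sum_{i\le j}\mathbb{E}\!\left[\min\!\Bigl(\tfrac{\xi_{ij}^{2}}{N^{2}},\,\tfrac{|\xi_{ij}|^{3}}{N^{5/2}}\Bigr)\right]\le \frac{\eta}{N^{2}}\sum_{i\le j}\mathbb{E}\xi_{ij}^{2}+\frac{1}{N^{2}}\sum_{i\le j}\mathbb{E}\!\left[\xi_{ij}^{2}\,\mathbf{1}_{\{|\xi_{ij}|>\eta\sqrt{N}\}}\right],
\]
where we used $|\xi|^{3}/\sqrt{N}\le \eta\xi^{2}$ on $\{|\xi|\le \eta\sqrt{N}\}$. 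The first term is $O(\eta)$ by \ref{itm:A1}, the second vanishes as $N\to\infty$ by the Lindeberg hypothesis \ref{itm:A2}; letting $\eta\downarrow 0$ concludes. This truncation is the main technical subtlety because only $L^2$ moments are postulated; once in place, the imaginary-part statement \eqref{eq:Gaussianisation 4} follows identically since the derivative bounds depend only on $|\mathcal{I}z|$, and the analogous $A_N^0$ versus $A_N^g$ claims are proved the same way with the rank-two perturbation matrix.
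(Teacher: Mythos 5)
Your proof is correct and follows essentially the same route as the paper: a Lindeberg-type exchange controlled by second- and third-order derivative bounds on the Stieltjes transform (per-entry bounds of order $N^{-2}$ and $N^{-5/2}$ in the unscaled variables), with the truncation at $|\xi|=\eta\sqrt{N}$ converting the cubic term into exactly the bounded-variance and Lindeberg sums of \ref{itm:A1} and \ref{itm:A2}. The only real difference is presentational: the paper invokes \citet[Theorem 1.1]{chatterjee2005simple} as a black box, computing $\lambda_{2}(\Phi)$ and $\lambda_{3}(\Phi)$ from resolvent identities, whereas you unroll that invariance principle by hand via the rank-one Sherman--Morrison update with perturbation $-\xi(e_i-e_j)(e_i-e_j)^{\top}/\sqrt{N}$ — which is precisely the matrix $\partial\Delta/\partial x_{i,j}$ that appears in the paper's derivative computation.
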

The proof of Lemma \ref{lemma:Gaussianisation} is routine and hence is skipped here and presented in the Appendix. The next lemma allows for minor tweaks in the diagonal entries of  $\Delta_{N}^{g}$.
\begin{lemma}\label{lemma:DeltaNbar}
Define a $N\times N$ matrix by
\begin{equation}\label{def:barA}
    \Bar{A}_{N}(i,j)=\frac{\sigma_{i,j}}{\sqrt{N}}G_{i\wedge j,i\vee j},\quad 1\leq i,j\leq N
\end{equation}
and let 
\begin{equation}\label{def:bardelta}
\Bar{\Delta}_{N}=\Bar{A}_{N}-X_{N}
\end{equation}
where $X_{N}$ is a diagonal matrix of order $N$ defined by 
$$
    X_{N}(i,i)=\sum_{k\neq i}\Bar{A}_{N}(i,k),\quad 1\leq i\leq N.
$$
Then
\begin{align}
    &\lim_{N\rightarrow\infty} L\left(ESD\left(\Delta_{N}^{g}\right),ESD\left(\Bar{\Delta}_{N}\right)\right)=0 \text{ in probability.}
\end{align}
\end{lemma}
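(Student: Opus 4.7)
\medskip

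\noindent\textbf{Proof proposal for Lemma \ref{lemma:DeltaNbar}.} The plan is to realize $\bar{\Delta}_N$ as a rank-$N$ (in fact diagonal) perturbation of $\Delta_N^g$ of vanishing Hilbert–Schmidt norm and then invoke the standard Hoffman–Wielandt bound on the L\'evy distance.

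First I would identify the difference $\bar{\Delta}_N - \Delta_N^g$ explicitly. Since $\sigma_{i,j}$ is symmetric in its indices, the off-diagonal entries of $\bar{A}_N$ and $A_N^g$ agree, so $\bar{\Delta}_N(i,j)=\Delta_N^g(i,j)$ for every $i\neq j$. On the diagonal one has
\begin{equation*}
    \Delta_N^g(i,i) \;=\; -\sum_{k\neq i} A_N^g(i,k), \qquad
    \bar{\Delta}_N(i,i) \;=\; \bar A_N(i,i) - \sum_{k\neq i}\bar A_N(i,k),
\end{equation*}
so that
\begin{equation*}
    \bar{\Delta}_N - \Delta_N^g \;=\; D_N, \qquad D_N \;=\; \mathrm{diag}\!\left(\tfrac{\sigma_{i,i}}{\sqrt N}\,G_{i,i}\right)_{1\le i\le N}.
\end{equation*}
Thus the two matrices differ by a diagonal matrix whose $i$-th entry is a centered Gaussian of variance $\sigma_{i,i}^2/N\le C/N\le 1/N$ by \ref{itm:A1}.

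Next I would apply the well-known bound (a consequence of the Hoffman–Wielandt inequality; see e.g.~\citet[Corollary A.41]{bai2010spectral})
\begin{equation*}
    L^3\!\left(\mathrm{ESD}(\Delta_N^g),\,\mathrm{ESD}(\bar{\Delta}_N)\right) \;\le\; \frac{1}{N}\,\mathrm{Tr}\!\left((\bar{\Delta}_N-\Delta_N^g)^2\right) \;=\; \frac{1}{N^2}\sum_{i=1}^{N}\sigma_{i,i}^2\,G_{i,i}^2,
\end{equation*}
where $L$ denotes the L\'evy distance. Taking expectation gives
\begin{equation*}
    \mathbb{E}\!\left[\frac{1}{N^2}\sum_{i=1}^{N}\sigma_{i,i}^2\,G_{i,i}^2\right] \;=\; \frac{1}{N^2}\sum_{i=1}^{N}\sigma_{i,i}^2 \;\le\; \frac{C}{N} \;\longrightarrow\; 0,
\end{equation*}
so by Markov's inequality $\frac{1}{N}\mathrm{Tr}((\bar{\Delta}_N-\Delta_N^g)^2)\to 0$ in probability, and therefore $L(\mathrm{ESD}(\Delta_N^g),\mathrm{ESD}(\bar{\Delta}_N))\to 0$ in probability, as claimed.

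There is essentially no hard step here: the whole content is the identification $\bar{\Delta}_N-\Delta_N^g=D_N$ of a diagonal matrix, after which the bounded variance assumption \ref{itm:A1} and the $1/\sqrt N$ scaling make the Hilbert–Schmidt norm of the perturbation vanish at rate $O(N^{-1/2})$. The only minor subtlety worth flagging is that $A_N^g$ and $\bar A_N$ agree only because $\sigma_{i,j}$ is interpreted symmetrically (i.e.\ $\sigma_{i,j}=\sigma_{j,i}$); this was implicit in the definitions \eqref{eq:ANgandDelNg}–\eqref{def:barA} but deserves a one-line remark.
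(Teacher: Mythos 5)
Your proposal is correct and follows essentially the same route as the paper: identify $\Bar{\Delta}_{N}-\Delta_{N}^{g}$ as the diagonal matrix with entries $A_{N}^{g}(i,i)=\sigma_{i,i}G_{i,i}/\sqrt{N}$, bound $L^{3}$ by the normalized trace of the squared difference via \citet[Corollary A.41]{bai2010spectral}, and note that its expectation is $\frac{1}{N^{2}}\sum_{i}\sigma_{i,i}^{2}=O(1/N)$ under \ref{itm:A1}. Your extra remark about the symmetry $\sigma_{i,j}=\sigma_{j,i}$ being needed for the off-diagonal entries to match is a fair observation but does not change the argument.
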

\begin{proof}
Observe that by \citet[Corollary A.41]{bai2010spectral}
\begin{equation}\label{eq:def:bardelta}
    \mathbb{E}\left[L^{3}\left(ESD\left(\Delta_{N}^{g}\right),ESD\left(\Bar{\Delta}_{N}\right)\right)\right]\leq \frac{1}{N}\mathbb{E}\left[\Tr\left[\Delta_{N}^{g}-\Bar{\Delta}_{N}\right]^{2}\right]
\end{equation}
Since $\Bar{\Delta}_{N}$ and $\Delta_{N}^{g}$ differs only in the diagonal entries, then we have $\left(\Bar{\Delta}_{N}-\Delta_{N}^{g}\right)_{i,i}=A_{N}^{g}(i,i)$, implying
\begin{align*}
    \frac{1}{N}\mathbb{E}\left[\Tr\left[\Delta_{N}^{g}-\Bar{\Delta}_{N}\right]^{2}\right]=\frac{1}{N}\mathbb{E}\left[\sum_{i=1}^{N}A_{N}^{g}(i,i)^{2}\right]=\frac{1}{N^{2}}\sum_{i=1}^{N}\sigma_{i,i}^{2}=O\left(\frac{1}{N}\right)\rightarrow 0
\end{align*}
The last order coming from \ref{itm:A1}. Hence $\mathbb{E}\left[L^{3}\left(ESD\left(\Delta_{N}^{g}\right),ESD\left(\Bar{\Delta}_{N}\right)\right)\right]\rightarrow 0$ which show \eqref{eq:def:bardelta}.
\end{proof}
The (diagonal) entries of $X_{N}$ are nothing but the row sums of $A_{N}^{g}$. However the correlation between an entry of $A_{N}^{g}$ and that of $X_{N}$ is small. The following decoupling lemma, shows that it does not hurt when the entries of $X_{N}$ are replaced by a mean-zero Gaussian random variable of the same variance that is independent of $A_{N}^{g}$. 
\begin{lemma}\citet[Lemma 2.4]{chakrabarty2018spectra}\label{lemma: AN+YN}
Let $(Z_{i}: i\geq 1)$ be a family of i.i.d. standard normal random variables, independent of $\left(G_{i,j}:1\leq i\leq j\right)$. Define a diagonal matrix $Y_{N}$ of order $N$ by
$$
    Y_{N}(i,i)=Z_{i}\sqrt{\frac{1}{N}\sum_{\substack{j=1\\j\neq i}}^{N}\sigma_{i,j}^{2}},\quad 1\leq i\leq N
$$
and let 
\begin{equation}\label{def:tildedelta}
    \widetilde{\Delta}_{N}=\Bar{A}_{N}+Y_{N}
\end{equation}
Then for every $k\in\mathbb{N}$
\begin{align}
    \lim_{N\rightarrow\infty}\frac{1}{N}\mathbb{E}\left(\Tr\left[(\widetilde{\Delta}_{N})^{2k}-(\Bar{\Delta}_{N})^{2k}\right]\right)=0
\end{align}
and
\begin{align}
    \lim_{N\rightarrow\infty}\frac{1}{N^{2}}\mathbb{E}\left(\Tr^{2}\left[(\widetilde{\Delta}_{N})^{k}\right]-\Tr^{2}\left[(\Bar{\Delta}_{N})^{2k}\right]\right)=0
\end{align}
\end{lemma}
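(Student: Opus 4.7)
The matrices $\widetilde\Delta_N$ and $\bar\Delta_N$ agree on all off-diagonal entries and differ only in their diagonals: $\widetilde\Delta_N(i,i) = \bar A_N(i,i) + Y_N(i,i)$ while $\bar\Delta_N(i,i) = \bar A_N(i,i) - \sum_{k\neq i}\bar A_N(i,k)$. My plan is to expand both traces as sums over closed walks and exploit the joint Gaussianity of all entries through Wick's formula. For any symmetric matrix $M$, write
\[
\Tr(M^{2k}) = \sum_{i_0,i_1,\ldots,i_{2k-1}} \prod_{s=0}^{2k-1} M(i_s,i_{s+1}), \qquad i_{2k}:=i_0,
\]
and split each walk's edges into off-diagonal steps ($i_s\neq i_{s+1}$), whose contribution is identical under both matrices, and diagonal steps ($i_s=i_{s+1}$), where the two matrices disagree.

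After taking expectations, a diagonal step at vertex $i$ contributes either $Y_N(i,i)$ (for $\widetilde\Delta_N$) or $-\sum_{k\neq i}\bar A_N(i,k)$ (for $\bar\Delta_N$), up to the common $\bar A_N(i,i)$ term whose contribution is $O(N^{-1/2})$ per self-loop and hence negligible in the aggregate. By Wick's formula, the expectation of the resulting product of centred Gaussians is a sum over pairings. In the $\widetilde\Delta_N$ case, $Y_N(i,i)$ is independent of $\bar A_N$, so it can only pair with another $Y_N(i',i')$, enforcing $i'=i$ and producing the variance factor $N^{-1}\sum_{k\neq i}\sigma_{i,k}^2$. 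In the $\bar\Delta_N$ case, the sum $-\sum_{k_1\neq i}\bar A_N(i,k_1)$ may pair either (a) with another such diagonal sum at vertex $i'$, yielding $\delta_{ii'}N^{-1}\sum_{k\neq i}\sigma_{i,k}^2$ and exactly matching the $\widetilde\Delta_N$ contribution, or (b) with an off-diagonal entry $\bar A_N(i_s,i_{s+1})$ appearing elsewhere in the walk, which forces $\{i,k_1\}=\{i_s,i_{s+1}\}$ and therefore eliminates a previously free summation, yielding a term smaller by a factor of $N^{-1}$. Summing over all closed walks of length $2k$ and using the uniform bound on $\sigma_{i,j}^2$ from \ref{itm:A1}, the discrepancy $N^{-1}\mathbb{E}\bigl(\Tr[(\widetilde\Delta_N)^{2k}]-\Tr[(\bar\Delta_N)^{2k}]\bigr)$ is therefore $o(1)$.

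The second identity is handled by the same scheme applied to $\Tr[M^{2k}]\cdot \Tr[M^{2k}]$ expanded as a double sum over two closed walks. The normalisation $N^{-2}$ compensates for the fact that each trace is individually $O(N)$, and one now has to rule out non-negligible correlations between the two walks produced by Wick pairings that cross from one walk to the other through $X_N$-entries; the same counting shows that such crossings require additional coincidences among the free indices and are therefore subleading. The main technical obstacle in both parts is the combinatorial bookkeeping: one must catalogue every Wick pairing that links a diagonal sum in $\bar\Delta_N$ to a matrix entry appearing elsewhere in the walk(s) and verify that it is dominated in $N$ by the paired-diagonals contributions; this is precisely the mechanism that justifies replacing the correlated diagonal $X_N$ by the independent Gaussian diagonal $Y_N$.
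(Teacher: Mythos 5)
Your plan is correct and follows essentially the same route as the proof the paper relies on (the paper omits the argument, deferring verbatim to Lemma 2.4 of \citet{chakrabarty2018spectra}): expand the traces over closed walks, apply Wick's formula to the jointly Gaussian entries, and observe that every pairing linking a row-sum diagonal factor to an off-diagonal entry pins an otherwise free summation index and hence costs a factor $N^{-1}$. One small inaccuracy in your case (a): for $i\neq i'$ the covariance $\mathbb{E}\left[X_{N}(i,i)X_{N}(i',i')\right]=\sigma_{i,i'}^{2}/N$ is not zero as the $\delta_{ii'}$ formula suggests, but this extra term is itself $O(N^{-1})$ and is absorbed by exactly the same counting as your case (b).
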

We skip the proof of the above lemma since it is verbatim same as Lemma 2.4 of \citet{chakrabarty2018spectra}. In the next lemma we show the convergence of ESD of the above defined diagonal matrix $Y_{N}$. 
\begin{lemma}\label{thm:ESD_YN}
    Under the assumptions \ref{itm:A1}-\ref{itm:A3}, there exists a unique probability distribution $\zeta$ on $\mathbb{R}$  such that 
    \begin{align*}
        \lim_{N\rightarrow\infty}\text{ESD}\left(Y_{N}\right)=\zeta \text{ weakly in probability}
    \end{align*}
    Further if there exists an open set $U\subseteq [0,1]^{2}$ such that $W>0$ on $U$, then $\zeta$ have unbounded support.
\end{lemma}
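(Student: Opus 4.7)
Since $Y_N$ is diagonal with entries $Y_N(i,i) = Z_i s_i(N)$, where $s_i(N) := \left(\frac{1}{N}\sum_{j \neq i}\sigma_{i,j}^2\right)^{1/2}$, the ESD of $Y_N$ is simply $\frac{1}{N}\sum_{i=1}^N \delta_{Z_i s_i(N)}$. The natural strategy is the method of moments. First I would compute the expected $k$-th moment of $\text{ESD}(Y_N)$, namely
\begin{align*}
    \mathbb{E}\left[\int x^k \, d\,\text{ESD}(Y_N)(x)\right] = \mathbf{m}_k \cdot \frac{1}{N}\sum_{i=1}^N s_i(N)^k,
\end{align*}
which vanishes when $k$ is odd. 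For even $k = 2m$, the sum $\frac{1}{N}\sum_i s_i(N)^{2m}$ can be rewritten as a homomorphism count over the star graph $S_{m+1}$ (one center with $m$ leaves):
\begin{align*}
    \frac{1}{N}\sum_{i=1}^N s_i(N)^{2m} = t(S_{m+1}, W_N) + O(1/N),
\end{align*}
where the $O(1/N)$ comes from omitting the $j = i$ summand and is controlled by \ref{itm:A1}. Since $t(F, \cdot)$ is continuous in the cut metric, \ref{itm:A3} yields $t(S_{m+1}, W_N) \to t(S_{m+1}, W) = \int_0^1 d_W(x)^m\, dx$, where $d_W(x) := \int_0^1 W(x,y)\, dy$.

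Next I would control fluctuations. Because the $Z_i$ are independent and the $s_i(N)$ are deterministic,
\begin{align*}
    \Var\left(\int x^k \, d\,\text{ESD}(Y_N)(x)\right) = \frac{1}{N^2}\sum_{i=1}^N \Var(Z_i^k)\, s_i(N)^{2k} \leq \frac{\Var(Z_1^k)}{N} \longrightarrow 0,
\end{align*}
using $s_i(N)^2 \leq 1$ from \ref{itm:A1}. Combined with the previous step, this shows that every moment of $\text{ESD}(Y_N)$ converges in probability; the limiting $2m$-th moment is $\mathbf{m}_{2m}\cdot t(S_{m+1}, W)$, which is precisely $\mathbb{E}\bigl[(Z \sqrt{d_W(U)})^{2m}\bigr]$ for $Z \sim N(0,1)$ independent of $U \sim \text{Unif}[0,1]$. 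Since $d_W \leq 1$, these limiting moments are dominated by the Gaussian moments $\mathbf{m}_{2m}$ and therefore satisfy Carleman's condition, so the measure $\zeta := \mathcal{L}\bigl(Z\sqrt{d_W(U)}\bigr)$ is uniquely determined by them. The method of moments then gives $\text{ESD}(Y_N) \to \zeta$ weakly in probability.

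For the unbounded support claim, if $W > 0$ on some open set $U \subseteq [0,1]^2$ then by Fubini the set $\{x \in [0,1] : d_W(x) > 0\}$ has positive Lebesgue measure, so $\mathbb{P}(d_W(U) > 0) > 0$. Conditionally on $d_W(U) = d > 0$, the variable $Z\sqrt{d}$ is a centered Gaussian with positive variance, which is unbounded; hence $\zeta$ has unbounded support. The only mildly delicate point is recognising $\frac{1}{N}\sum_i s_i(N)^{2m}$ as a star-homomorphism count and reading off cut-metric continuity; once this is in hand, the rest is routine, and no serious obstacle is expected.
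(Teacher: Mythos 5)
Your proposal is correct and follows essentially the same route as the paper: method of moments, recognition of $\frac{1}{N}\sum_i s_i(N)^{2m}$ as a star-graph homomorphism density converging under \ref{itm:A3}, variance control via independence of the $Z_i$, a moment-growth (Carleman-type) condition for uniqueness, and the representation of $\zeta$ as the law of $Z\sqrt{d_W(U)}$ (which the paper obtains as $XZ$ with $X^2 \eid d_W(U)$) to get unbounded support. The only cosmetic difference is that you name the limit explicitly up front rather than deriving it through the moment sequence first.
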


\begin{proof}
We would be using method of moments to prove our result. After showing moment convergence we would show that the limits uniquely determine the distribution. And finally we would show the unbounded support.\\
\\
\textbf{Part 1: Convergence of Moments}\\
\\
Define $m_{k}^N$ to be the $k^{th}$ moment of ESD of $Y_{N}$,
\begin{align*}
    m_{k}^N=\frac{1}{N}\Tr(Y_{N}^{k})=\frac{1}{N}\sum_{i=1}^{N}\left(\sum_{j\neq i, j=1}^N\frac{\sigma_{i,j}^{2}}{N}\right)^{\frac{k}{2}}Z_{i}^{k}
\end{align*}
We first show that variance of $m_k^N$ goes to zero as $N\to \infty$. Note that using $\{Z_i\}$ is a collection of iid standard Gaussian random variables we have
\begin{align*}
    \Var\left(m_{k}^N\right)
    &=\Var\left(Z_{1}^{k}\right)\frac{1}{N^{2}}\sum_{i=1}^{N}\left(\sum_{j\neq i, j=1}^N\frac{\sigma_{i,j}^{2}}{N}\right)^{k}.
\end{align*}
Now define
\begin{align*}
    S_{1}^{k}:=\frac{1}{N}\sum_{i=1}^{N}\left(\sum_{j\neq i}\frac{\sigma_{i,j}^{2}}{N}\right)^{k}
    &=\frac{1}{N^{k+1}}\sum_{i=1}^{N}\sum_{j_{1}\hdots j_{k}\neq i}\sigma_{i,j_{1}}^{2}\hdots \sigma_{i,j_{k}}^{2}
\end{align*}
and
\begin{align}
    \widetilde{S}_{1}^{k}:=\frac{1}{N^{k+1}}\sum_{i=1}^{N}\sum_{j_{1}\hdots j_{k}}\sigma_{i,j_{1}}^{2}\hdots \sigma_{i,j_{k}}^{2}.
\end{align}
It immediately follows that
\begin{align}
    \left|S_{1}^{k}-\widetilde{S}_{1}^{k}\right|\leq \frac{1}{N^{k+1}}O(N^{k})=O\left(\frac{1}{N}\right)\overset{N\rightarrow\infty}{\longrightarrow}0
\end{align}
Consider the star graph $F_{k}=\left(V_{k},E_{k}\right)$ with $k+1$ vertices and $k$ edges, with the internal node labelled as $i$ and the leaves labelled as $j_{1},\cdots,j_{k}$. An example of such a star graph is shown in Figure \ref{fig:Star_4}.
\begin{figure}
\centering
\begin{tikzpicture}
  [scale=.8,auto=left,every node/.style={circle,fill=blue!40}]
  \node (n6) at (0,0) {i};
  \node (n5) at (2,2)  {$j_{2}$};
  \node (n4) at (-2,2) {$j_{1}$};
  \node (n3) at (-2,-2)  {$j_{4}$};
  \node (n2) at (2,-2)  {$j_{3}$};

  \foreach \from/\to in {n6/n5,n6/n4,n6/n3,n6/n2}
    \draw (\from) -- (\to);
\end{tikzpicture}
\caption{Star graph $F_{4}$}\label{fig:Star_4}
\end{figure}
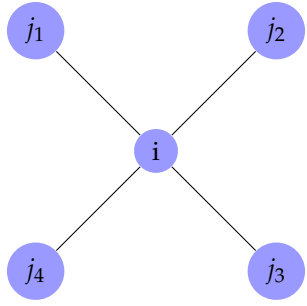
Then we have (recall the definition of homomorphism density~\eqref{def:graphhom})
\begin{align}
    t\left(F_{k},W_{N}\right)
    &=\int_{[0,1]^{k+1}}\prod_{(u,v)\in E_{k}}W_{N}(x_{u},x_{v})\prod_{u\in V_{k}}dx_{u}\nonumber\\
    &=\sum_{i,j_{1}\hdots j_{k}=1}^{N}\int_{I_{i}\otimes_{l=1}^{k}I_{j_{l}}}\sigma_{i,j_{1}}^{2}\hdots\sigma_{i,j_{2}}^{2}dx_{i}dx_{j_{1}}\hdots dx_{j_{k}}\nonumber\\
    &=\frac{1}{N^{k+1}}\sum_{i=1}^{N}\sum_{j_{1}\hdots j_{k}}\sigma_{i,j_{1}}^{2}\hdots \sigma_{i,j_{k}}^{2}=: \widetilde{S}_{1}^{k}\nonumber
   \end{align}
Using \ref{itm:A3} we have $t(F_{k},W_{N})\rightarrow t(F_{k},W)<\infty$, implying
\begin{align}\label{eq:YN_S_convg}
    S_{1}^{k}\overset{N\rightarrow\infty}{\longrightarrow}t(F_{k},W).
\end{align}
Hence,
\begin{align}\label{eq:L2_convg_YN}
    \Var(m_{k}^N)=\frac{\Var\left(Z_{1}^{k}\right)}{N}S_{1}^{k}\overset{N\rightarrow\infty}{\longrightarrow}0.
\end{align}
Now we show that $\mathbb{E}(m_{k}^N)$ converges. Observe that
\begin{align*}
    \mathbb{E}(m_{k}^N)=
    \begin{cases}
        0 & \text{ if $k$ is odd},\\
        \mu_{k}\frac{1}{N}\sum_{i=1}^{N}\left(\sum_{j\neq i}\frac{\sigma_{i,j}^{2}}{N}\right)^{\frac{k}{2}} & \text{ if $k$ is even},
    \end{cases}
\end{align*}
where $\mu_{k}$ is the $k^{th}$ moment of standard Gaussian distribution 
If $k$ is even, then we have $\mathbb{E}(m_{k})=\mu_{k}S_{1}^{\frac{k}{2}}$. So combining \eqref{eq:YN_S_convg} and \eqref{eq:L2_convg_YN}, we infer
\begin{align}
    m_{k}^N\overset{L_{2}}{\longrightarrow}
    \begin{cases}
        0 & \text{ if $k$ is odd}\\
        \mu_{k}t\left(F_{\frac{k}{2}},\, W\right) & \text{ if $k$ is even}
    \end{cases}
\end{align}
\\
\textbf{Part 2: Uniqueness of the Limiting distribution}\\
\\
Before proceeding let us define
\begin{align}
    \eta_{k}=
    \begin{cases}
        0 & \text{ if $k$ is odd}\\
        \mu_{k}t\left(F_{\frac{k}{2}},W\right) & \text{ if $k$ is even}
    \end{cases}
\end{align}
then by Lemma B.1 from \citet{bai2010spectral} we can easily see that there exists a probability measure $\zeta$ identified by the moments $\eta_{k}$. To deal with the uniqueness of $\zeta$ observe that due to \ref{itm:A1}, for all $k$ even we can find $C>0$ such that $t(F_{k/2},W)\leq C^{k/2}$. Then for $k\in \mathbb{N}$
\begin{align}
    \frac{1}{2k}\eta_{2k}^{\frac{1}{2k}}\preceq \frac{1}{2k}\left(\frac{2k!}{k!}\right)^{\frac{1}{2k}}.
\end{align}
Using the Sterling's approximation
it is immediate that $\limsup_{k\rightarrow\infty}\frac{1}{2k}\eta_{2k}^{\frac{1}{2k}}<\infty$.
By applying Theorem 1 of \citet{lin2017recent}, the probability measure $\zeta$ is uniquely identified by the moment sequence $\eta_{k}$.
\\
\textbf{Part 3: Unbounded support of $\mu$}\\
\\
Suppose there exists an open $U\subseteq [0,1]^{2}$ such that $W>0$ on it. For $k\in \mathbb{N}$ define 
\begin{align*}
    \beta_{k}=
    \begin{cases}
        0 & \text{ if $k$ is odd}\\
        t\left(F_{\frac{k}{2}},W\right) & \text{ if $k$ is even}
    \end{cases}
\end{align*}
Since $W>0$ on $U$, then by definition $t\left(F_{\frac{k}{2}},W\right)>0$. Then again using Lemma B.1 from \citet{bai2010spectral} there exists a probability measure $\kappa$ having the above moment sequence. By Theorem 1 of \citet{lin2017recent}, we can say that the moment sequence $\beta_{k}$ uniquely identifies the probability measure $\kappa$. Now consider $X\sim\kappa$ and consider $Z\sim N(0,1)$ independently of $X$. Then it is easy to observe that $XZ$ have the moment sequence $\eta_{k}$ and hence $XZ\sim\zeta$.
It easily follows from here that if we take any $M>0$ then $P(XZ>M)>0$ and
\[
    \sup(\text{supp}(XZ))=\sup(\text{supp}(\zeta))=\infty
  \]  
This completes the proof of Lemma~\ref{thm:ESD_YN}.
\end{proof}

{\bf Notations:} We now recall some notations from~\citet{chakrabarty2018spectra} and we refer the reader to \citet{nica2006lectures} for the combinatorial properties of non-crossing pair partition and Krewaras complement.

For $k\in\mathbb{N}$ and $\Pi$ a partition of $\{1,\cdots, 2k\}$, define
$$
    \Psi\left(\Pi,N\right)=\left\{i\in\{1,\cdots,N\}^{2k}:i_{u}=i_{v}\iff u,v\text{ belong to the same block of }\Pi\right\}
$$

For an even positive integer $k$, $NC_{2}(k)$ is the set of non-crossing pair partitions of $\{1,2,\hdots,k\}$. For $\sigma\in NC_{2}(k)$, its Kreweras complement $K(\sigma)$ is the maximal non-crossing partition $\Bar{\sigma}$ of $\{\Bar{1},\hdots,\Bar{k}\}$, such that $\sigma\cup\Bar{\sigma}$ is a non-crossing partition of $\{1,\Bar{1},\hdots, k,\Bar{k}\}$. 
For $\sigma\in NC_{2}(k)$ and $N\geq 1$, define
\begin{align}
    S(\sigma,N)=\{i\in\{1,\hdots,N\}^{k}: i_{u}=i_{v} \iff \text{ $u,v$ belong to the same block of $K(\sigma)$}\}
\end{align}
and 
\begin{align}
    C(k,N)=\{1,\hdots N\}^{k}\setminus \bigg(\bigcup_{\sigma\in NC_{2}(k)}S(\sigma,N)\bigg)
\end{align}
In other words, $S(\sigma,N)$ is the same as $\Psi(K(\sigma),N)$. We will use this fact in the upcoming proof.

\subsection{Proof of Theorem \ref{thm:laplacian}}
Combining Lemmas \ref{lemma:Gaussianisation} and \ref{lemma:DeltaNbar} we find that ESD$(\Delta_{N}^{0})$  and ESD$(\Bar{\Delta}_{N})$ have the same in probability limit. Next we use method of moments on ESD$(\Bar{\Delta}_{N})$. By Lemma \ref{lemma: AN+YN} it is enough to look at the moments of ESD$(\widetilde{\Delta}_{N})$ where $\widetilde{\Delta}_N$ is as defined in \eqref{def:tildedelta}. The rest of the proof is organised as follows. First we show the $L_{2}$ convergence of moments of ESD$(\widetilde{\Delta}_{N})$. We work separately for even and odd moment. Convergence of the even moment  is more involved, and the odd moment convergence follows along similar lines. In case of even moments we appeal to the fact that if $X$ is a random variable and $\mathbb{E}X$ converges to $\alpha$ for some $\alpha\in\mathbb{R}$, and $\mathbb{E}X^{2}$ converges to $\alpha^{2}$, then $X$ converges to $\alpha$ is $L_{2}$. We will follow the combinatorial ideas from \citet{zhu2020graphon} and express the expected value of moments in terms of graph homomorphism and use graphon convergence assumption (\ref{itm:A3}) to find the limit. Finally an appeal to Theorem 1 from \citet{lin2017recent} would show the uniqueness of the distribution.

\subsubsection{\textbf{Convergence of Moments}}
Fix $k\in\mathbb{N}\cup\{0\}$. We are interested to look at the $L_{2}$ convergence of $k^{th}$ moment of ESD($\Tilde{\Delta}_{N}$) given by $$\frac{1}{N}\Tr[(\Tilde{\Delta}_{N})^{k}].$$ We would deal with the odd and even moments separately. We shall show that the odd moments converge to $0$ and the even moments converge to $\sum_{\mathcal{P}_{k}}\sum_{\sigma\in NC_{2}(\sum m_{p})}\beta(\sigma)\mathcal{E}(\sigma)$
where $\mathcal P_k$ is defined in Section~\ref{sec:moment_description}, $\beta(\sigma)\geq 0$ and $\mathcal{E}(\sigma)$ will be defined in \eqref{eq:graphon_convg_tree_sigma_1} and \eqref{eq:E_sigma} respectively.

\textbf{Case 1: $k$ is even}

By definition we have
\begin{align}\label{eq:trace_expansion}
    \frac{1}{N}\Tr\left(\widetilde{\Delta}_{N}^{k}\right)=\frac{1}{N}\sum_{\substack{m_{1},m_{2}\hdots,m_{k}\\n_{1},n_{2}\hdots,n_{k}}}\Tr\left(\Bar{A}_{N}^{m_{1}}Y_{N}^{n_{1}}\hdots\Bar{A}_{N}^{m_{k}}Y_{N}^{n_{k}}\right)
\end{align}
where the sum is over all the terms in the expansion of $\left(\Bar{A}_{N}+Y_{N}\right)^{k}$, i.e. we have $2^{k}$ many terms and for every choice of $m_{1},n_{1},\hdots,m_{k},n_{k}$ we have $\sum_{i=1}^{k} m_{i}+n_{i}=k$. We can take $k$ many expressions in each term of the expansion, since we allow the exponents to be 0. So enough to look at $L_{2}$ convergence of $\frac{1}{N}\Tr\left(\Bar{A}_{N}^{m_{1}}Y_{N}^{n_{1}}\hdots\Bar{A}_{N}^{m_{k}}Y_{N}^{n_{k}}\right)$. Let
$$M_j= \sum_{p=1}^j m_j+n_j,\, \, j=1,\cdots, k.$$
Observe that $M_{k}=k$. Take $\widetilde{i}$ such that $\widetilde{i}=(\widetilde{i}_{1},\cdots,\widetilde{i}_{M_{k}+1})\in\{1,\cdots N\}^{M_{k}+1}$ and $\widetilde{i}_{1+M_k}=\widetilde{i}_{1}$. Then
\begin{align}\label{eq:trace_exp_i1_im}
    &\frac{1}{N}\Tr\left(\Bar{A}_{N}^{m_{1}}Y_{N}^{n_{1}}\hdots\Bar{A}_{N}^{m_{k}}Y_{N}^{n_{k}}\right)\nonumber
    \\
    =&\frac{1}{N}\sum_{{\widetilde{i}}}\prod_{j=1}^{m_{1}}\Bar{A}_{N}(i_{j},i_{j+1})\prod_{j=m_{1}+1}^{M_1}Y_{N}(i_{j},i_{j+1})\hdots
    \prod_{j=1+M_{k-1}}^{m_{k}+M_{k-1}}\Bar{A}_{N}(i_{j},i_{j+1})\prod_{j=1+m_{k}+M_{k-1}}^{M_k}Y_{N}(i_{j},i_{j+1})\nonumber\\
    =&\frac{1}{N}\sum_{i}\prod_{j=1}^{m_{1}}\Bar{A}_{N}(i_{j},i_{j+1})Y_{N}^{n_{1}}(i_{m_{1}+1},i_{m_{1}+1})\hdots\prod_{j=1+\sum_{p=1}^{k-1}m_{p}}^{\sum_{p=1}^{k}m_{p}}\Bar{A}_{N}(i_{j},i_{j+1})Y_{N}^{n_{k}}(i_{1},i_{1})\nonumber\\
    =&\frac{1}{N}\sum_{i_{1},i_{2},\hdots i_{\sum m_{p}}=1}^{N}\prod_{j=1}^{\sum m_{p}}\Bar{A}_{N}(i_{j},i_{j+1})\prod_{j=1}^{k}Y_{N}^{n_{j}}\left(i_{1+\sum_{p=1}^{j}m_{p}},i_{1+\sum_{p=1}^{j}m_{p}}\right)\nonumber\\
    =&\frac{1}{N}\sum_{i_{1}\hdots i_{\sum m_{p}}}\prod_{j=1}^{\sum m_{p}}G_{i_{j}\wedge i_{j+1},i_{j}\vee i_{j+1}}\prod_{j=1}^{\sum m_{p}}\frac{\sigma_{i_{j},i_{j+1}}}{\sqrt{N}}\prod_{j=1}^{k}\left(\frac{1}{N}\sum_{t\neq i_{1+\sum_{p=1}^{j}m_{p}}}\sigma_{i_{1+\sum_{p=1}^{j}m_{p}},t}^{2}\right)^{\frac{n_{j}}{2}}\prod_{j=1}^{k}Z_{i_{1+\sum_{p=1}^{j}m_{p}}}^{n_{j}}.
\end{align}
Here recall that $(Z_{i}: i\geq 1)$ is a family of i.i.d. standard Normal random variables, independent of $\left(G_{i,j}:1\leq i\leq j\right)$ which are also independent standard Normal random variables. Also above $i=(i_{1},i_{2},\hdots i_{\sum m_{p}})$ with $i_{1+\sum_{p=1}^{k}m_{p}}=i_{1}$.
Using definition of Kreweras complement we have the following decomposition of \eqref{eq:trace_exp_i1_im}
\begin{align}\label{eq:sum_break_Kreweras}
    \sum_{i_{1},i_{2},\hdots,i_{\sum m_{p}}}=\sum_{i\in C\left(\sum m_{p},N\right)}+\sum_{\sigma\in NC_{2}\left(\sum m_{p}\right)}\sum_{i\in S(\sigma,N)}
\end{align}
Now $i\in S(\sigma,N)$ is same as saying $i\in \Psi\left(K(\sigma),N\right)$. Observe that
\begin{align}
    \mathbb{E}\left[\frac{1}{N}\Tr\left(\Bar{A}_{N}^{m_{1}}Y_{N}^{n_{1}}\hdots\Bar{A}_{N}^{m_{k}}Y_{N}^{n_{k}}\right)\right]
    &=\frac{1}{N}\sum_{i_{1}\hdots i_{\sum m_{p}}}\mathbb{E}\left[\prod_{j=1}^{\sum m_{p}}G_{i_{j}\wedge i_{j+1},i_{j}\vee i_{j+1}}\right]\prod_{j=1}^{\sum m_{p}}\frac{\sigma_{i_{j},i_{j+1}}}{\sqrt{N}} \times\nonumber\\
    &\prod_{j=1}^{k}\left(\frac{1}{N}\sum_{t\neq i_{1+\sum_{p=1}^{j}m_{p}}}\sigma_{i_{1+\sum_{p=1}^{j}m_{p}},t}^{2}\right)^{\frac{n_{j}}{2}}\mathbb{E}\left[\prod_{j=1}^{k}Z_{i_{1+\sum_{p=1}^{j}m_{p}}}^{n_{j}}\right]. \label{big:expression}
\end{align}
Using \eqref{eq:sum_break_Kreweras} let us first look at the sum over $S(\sigma,N)$. Consider a partition $\Pi$ of $\{1,2,\hdots,\sum m_{p}\}$, and take $i\in\Pi$, then observe that $\mathbb{E}\left[\prod_{j=1}^{\sum m_{p}}G_{i_{j}\wedge i_{j+1},i_{j}\vee i_{j+1}}\right]$ does not depend on $i$, but on the partition $\Pi$. Define 
$$
    \Phi(\Pi):=\mathbb{E}\left[\prod_{j=1}^{\sum m_{p}}G_{i_{j}\wedge i_{j+1},i_{j}\vee i_{j+1}}\right].
$$
First we focus on the factor 
$$
    \mathbb{E}\left[\prod_{j=1}^{k}Z_{i_{1+\sum_{p=1}^{j}m_{p}}}^{n_{j}}\right].
$$
from \eqref{big:expression}. One easy observation is that it only depends upon the partition $\Pi$ where $i$ belongs. Now consider a partition $\Pi$ and suppose $i\in\Pi$. For notational simplicity we identify $1$ by $1+\sum_{p=1}^{k}m_{p}$. Then consider the blocks where the indices $\{m_{1}+1,m_{1}+m_{2}+1,\hdots,1+\sum_{p=1}^{k} m_{p}\}$ belongs. Then we have
\begin{align}\label{eq:exp_Z}
    \mathbb{E}\left[\prod_{j=1}^{k}Z_{i_{1+\sum_{p=1}^{j}m_{p}}}^{n_{j}}\right]
    &=\mathbb{E}\left[\prod_{u\in\Pi}\prod_{\substack{j=1\\1+\sum_{p=1}^{j}m_{p}\in u}}^{k}Z_{i_{1+\sum_{p=1}^{j}m_{p}}}^{n_{j}}\right]=\prod_{u\in\Pi}\mathbb{E}\left[\prod_{\substack{j=1\\1+\sum_{p=1}^{j}m_{p}\in u}}^{k}Z_{l_{u}}^{n_{j}}\right]
\end{align}
Where $u$ denotes a block in $\Pi$ and $l_{u}$ denotes the corresponding representative element. So now if for some block $u\in\Pi$,
\begin{equation}\label{eq:odd}
\sum_{\substack{j=1\\1+\sum_{p=1}^{j}m_{p}\in u}}^{k}n_{j}\equiv 1 \pmod2 
\end{equation}
then the expectation in \eqref{eq:exp_Z} is 0, and hence in turn the whole expression is 0. So while looking at $\sum_{i\in S(\sigma,N)}$ in \eqref{eq:sum_break_Kreweras} we would only be looking at the case where for all $u\in K(\sigma)$
\begin{equation}\label{eq:even}
\sum_{\substack{j=1\\1+\sum_{p=1}^{j}m_{p}\in u}}^{k}n_{j}\equiv 0 \pmod2 
\end{equation}
holds.  Now remember the expectation in \eqref{eq:exp_Z} does not depend on the choice of $i\in S(\sigma,N)$, hence can go out of the sum. So we would be focusing on
\begin{align}\label{eq:E(P_i)}
    \frac{1}{N}\sum_{i\in S(\sigma,N)}\Phi(K(\sigma))\prod_{j=1}^{\sum m_{p}}\frac{\sigma_{i_{j},i_{j+1}}}{\sqrt{N}}\prod_{j=1}^{k}\left(\frac{1}{N}\sum_{t\neq i_{1+\sum_{p=1}^{j}m_{p}}}\sigma_{i_{1+\sum_{p=1}^{j}m_{p}},t}^{2}\right)^{\frac{n_{j}}{2}}
\end{align}
Define $m=\sum_{p=1}^{k}m_{p}$. We now use a combinatorial identity from proof of Theorem 1.2 in \citet{chakrabarty2018spectra}.
\begin{align}\label{eq:Spectra_Paper_eqn}
    \lim_{N\rightarrow\infty}N^{-(\frac{m}{2}+1)}\Phi(\Pi)\#\Psi(\Pi,N)=
    \begin{dcases}
            1 \text{ if $m$ is even and $\Pi=K(\sigma)$ for some $\sigma\in NC_{2}(m)$}\\
            0 \text{ otherwise }
   \end{dcases}
\end{align}
The above follows from standard arguments leading to proof of Wigner's semicircle law using method of moments. Observe that if 
\begin{align}\label{eq:m_oddNC2}
m\equiv 1 \pmod 2, \text{ then } NC_{2}(m)=\emptyset
\end{align}
where $\emptyset$ denotes the null set.  Hence we can safely ignore this case. So when considering the sum over $NC_{2}(m)$ we assume that $m$ is even. Since we are having $\Pi=K(\sigma)$, and recalling that $\Psi\left(K(\sigma),N\right)=S(\sigma,N)$, then using \eqref{eq:Spectra_Paper_eqn} we have
\begin{align}\label{eq:equation_for_limit}
    &\frac{1}{\#\Psi\left(K(\sigma),N\right)}\sum_{i\in\Psi(K\left(\sigma),N\right)}\prod_{j=1}^{m}\sigma_{i_{j},i_{j+1}}\prod_{j=1}^{k}\left(\frac{1}{N}\sum_{t\neq i_{1+\sum_{p=1}^{j}m_{p}}}\sigma_{i_{1+\sum_{p=1}^{j}m_{p}},t}^{2}\right)^{\frac{n_{j}}{2}}\nonumber\\
    &\approx\frac{1}{N^{\frac{m}{2}+1}}\sum_{i\in\Psi\left(K(\sigma),N\right)}\prod_{j=1}^{m}\sigma_{i_{j},i_{j+1}}\prod_{j=1}^{k}\left(\frac{1}{N}\sum_{t\neq i_{1+\sum_{p=1}^{j}m_{p}}}\sigma_{i_{1+\sum_{p=1}^{j}m_{p}},t}^{2}\right)^{\frac{n_{j}}{2}}
\end{align}
The above follows from  $\#\Psi\left(K(\sigma),N\right)=O\left(N^{\frac{m}{2}+1}\right)$. \footnote{$\approx$ implies they are same in the limit.}

Now let us consider the product $\prod_{j=1}^{m}\sigma_{i_{j},i_{j+1}}$. Observe that if all coordinates of $i$ were distinct then $i_{1}\rightarrow i_{2}\rightarrow\hdots\rightarrow i_{m}\rightarrow i_{1}$ forms a closed walk ($G_{W}$) on $m$ vertices. Now we have $i\in K(\sigma)$, and $K(\sigma)$ can have only $\frac{m}{2}+1$ many distinct block implies there are only $\frac{m}{2}+1$ many distinct values in $i$. Hence we have the following modification of $G_{W}$. Glue together the vertices $i_{a}$ and $i_{b}$ which appear in the same block. Since the previous graph $G_{W}$ was a closed walk, which is a connected graph, then the new graph (denoted by $G=(V,E)$) will be connected. Observe $V$ is the blocks in $i$, that is,  the blocks in $\Pi$ and $E$ is the edges between them (without repetition). Observe that $G$ only depends upon the positions in $i$ which are equal and which are not, hence the graph is independent of choice of $i\in S(\sigma,N)$. $G$ only depends upon the blocks of  $K(\sigma)$.\\

In the product $\prod_{j=1}^{m}\sigma_{i_{j},i_{j+1}}$ the number of times the unordered pair $(i_{j},i_{j+1})$ would appear is same as the number of times the edge between $i_{j}$ and $i_{j+1}$ is traversed in the graph $G$ while following the previous closed walk. If $i_{j}$ and $i_{j+1}$ belong in the same block, then the edge between them is basically a self loop. Since we are looking undirected graph, then the total number of repetition of the edge $(i_{j},i_{j+1})$ is the same as the total number of appearance of $(i_{j},i_{j+1})$ and $(i_{j+1},i_{j})$ in the product. This takes care of the symmetry constraint. Then we have
\begin{align}\label{eq:equal_prod_var}
    \prod_{j=1}^{m}\sigma_{i_{j},i_{j+1}}=\prod_{e\in E}\sigma_{e}^{t_{e}}
\end{align}
where $e=(a,b)$ denotes the edge between vertex $a$ and $b$ and $t_{e}$ is the number of times the edge is repeated in the closed walk. Independence of the graph from $i$ gives
\begin{align*}
    \Phi(K(\sigma))=\mathbb{E}\bigg[\prod_{e\in E}G_{e}^{t_{e}}\bigg]
\end{align*}
\\
We consider three exhaustive cases
\begin{enumerate}[label=\textbf{C.\arabic*}]
    \item $\forall e\in E, \quad t_{e}=2$;\label{itm:Case1}
    \item $\exists\, \,  e\in E$ such that $t_{e}=1$;\label{itm:Case2}
    \item $\forall e\in E, \, t_{e}\geq 2$ and $\exists\, e\in E$\label{itm:Case3} such that $t_{e}>2$.
\end{enumerate}
Let us first deal with \ref{itm:Case2}. Suppose $e_{1}$ is the edge appearing only once. Then
\begin{align*}
    \Phi(K(\sigma))=\mathbb{E}(G_{e_{1}})\mathbb{E}\left(\prod_{e\in E\setminus\{e_{1}\}}G_{e}^{t_{e}}\right)=0\quad\text{since $G_{e}\sim N(0,1)$}
\end{align*}
But this would contradict \eqref{eq:Spectra_Paper_eqn}. Hence this case does not happen.\\

Now let us look at \ref{itm:Case3}. Using \eqref{eq:equal_prod_var} we have $\sum_{e\in E}t_{e}=m$. By \ref{itm:Case3} we have $\sum_{e\in E}t_{e}>2|E|$, where $|E|$ denotes the number of edges in $G$. Since $G$ is a connected graph, then $|E|\geq |V|-1=\frac{m}{2}$. Then $\sum_{e\in E}t_{e}>m$. This is a contradiction again and hence this case is also not possible.\\

In order to deal with \ref{itm:Case1} we break it into two sub-cases.\\
\noindent
\textbf{Sub-Case 1:} The graph $G$ has a self edge. 
Then we have a connected graph $G$ having a self loop. Even if we remove the self loop still the graph remains connected. Say, $e_{1}$ is the self loop. Then consider the spanning tree of $G$ which we denote by $S$, then $|E(S)|=m/2$, then observe
\[
    t_{e_{1}}+\sum_{e\in E(S)}t_{e}\leq \sum_{e\in E}t_{e}\]
    which implies $
  2+\frac{m}{2}\times 2\leq m$ since here $t_{e}=2$ for all $e\in E$. This is a contradiction and it shows that the graph cannot have a self edge.

\textbf{Sub-Case 2:} We have a connected graph $G$ having no self loop and $\frac{m}{2}+1$ vertices and $t_{e}=2, \, \forall e\in E$. Then using $\sum_{e\in E}t_{e}=m$ we have $|E|=\frac{m}{2}$. Hence $G$ is a tree.\\

Since the graph $G$ depends only on $\sigma\in NC_{2}(m)$, then we redefine it as $G\equiv T(\sigma)=\left(V(\sigma),E(\sigma)\right)$, where $V(= V(\sigma))$ and $E(= E(\sigma))$ denotes the vertices and edges respectively.\\

Consider the $s^{th}$ block of $K(\sigma)$ and say the representative element is $l_{s}$ and define
\begin{align*}
    \gamma_{s}=\#\left\{1\leq j\leq k:\quad 1+\sum_{i=1}^{j}m_{i}\in \left\{\text{s$^{th}$ block}\right\}\right\}
\end{align*}
and 
\begin{align*}
    \left\{s_{1},s_{2},\cdots,s_{\gamma_{s}}\right\}=\left\{1\leq j\leq k:\quad 1+\sum_{i=1}^{j}m_{i}\in \left\{\text{s$^{th}$ block}\right\}\right\}
\end{align*}
Then, since $i_{a}=i_{b}$ if $a,b$ are in the same block, it is easy to see that
\begin{align}\label{eq:sigma_prod_sum_1st}
    \prod_{j=1}^{k}\left(\frac{1}{N}\sum_{t\neq i_{1+\sum_{p=1}^{j}m_{p}}}\sigma_{i_{\sum_{p=1}^{j}m_{p}},t}^{2}\right)^{\frac{n_{j}}{2}}=\prod_{s=1}^{\frac{m}{2}+1}\left(\frac{1}{N}\sum_{t\neq l_{s}}\sigma_{l_{s},t}^{2}\right)^{\sum_{j=1}^{\gamma_{s}}n_{s_{j}}/2}
\end{align}
Observe that from \eqref{eq:even} we have $$\sum_{j=1}^{\gamma_{s}}n_{s_{j}}=\sum_{\substack{j=1\\1+\sum_{p=1}^{j}m_{p}\in s}}^{k}n_{j}\equiv 0\pmod 2.$$ So define $\widetilde{n}_{s}=\sum_{j=1}^{\gamma_{s}}n_{s_{j}}/2$ and then $\sum_{s}\widetilde{n}_{s}=\frac{1}{2}\sum_{j=1}^{k}n_{j}$, where $\sum_{s}$ denotes sum over the blocks of $K(\sigma)$. Then
\begin{align}\label{eq:sigma_prod_sum_2nd}
    \prod_{s=1}^{\frac{m}{2}+1}\left(\frac{1}{N}\sum_{t\neq l_{s}}\sigma_{l_{s},t}^{2}\right)^{\sum_{j=1}^{\gamma_{s}}n_{s_{j}}/2}
    &=\frac{1}{N^{\sum \widetilde{n}_{s}}}\prod_{s=1}^{\frac{m}{2}+1}\left(\sum_{p_{1}\hdots p_{\widetilde{n}_{s}}\neq l_{s}}\sigma_{l_{s},p_{1}}^{2}\hdots \sigma_{l_{s},\widetilde{n}_{s}}^{2}\right)
\end{align}
Then combining \eqref{eq:sigma_prod_sum_1st} and \eqref{eq:sigma_prod_sum_2nd},  \eqref{eq:equation_for_limit} becomes
\begin{align}
    &\frac{1}{N^{\frac{m}{2}+1}}\sum_{i\in\Psi\left(K(\sigma),N\right)}\prod_{j=1}^{m}\sigma_{i_{j},i_{j+1}}\prod_{j=1}^{k}\left(\frac{1}{N}\sum_{t\neq i_{1+\sum_{p=1}^{j}m_{p}}}\sigma_{i_{1+\sum_{p=1}^{j}m_{p}},t}^{2}\right)^{\frac{n_{j}}{2}}\nonumber\\
    &=\frac{1}{N^{\frac{m}{2}+1}}\sum_{l_{1}\neq\hdots\neq l_{\frac{m}{2}+1}}\prod_{(u,v)\in E}\sigma_{l_{u},l_{v}}^{2}\prod_{s=1}^{\frac{m}{2}+1}\left(\frac{1}{N^{\widetilde{n}_{s}}}\sum_{p_{1},\hdots,p_{\widetilde{n}_{s}}\neq l_{s}}\sigma_{l_{s},p_{1}}^{2}\hdots\sigma_{l_{s},p_{\widetilde{n}_{s}}}^{2}\right)\label{eq:specify_p(s,i)}\\
    &=\frac{1}{N^{1+\frac{\sum m_{p}+n_{p}}{2}}}\sum_{\substack{l_{1}\neq\hdots\neq l_{m/2+1}\\p_{(s,1)}\hdots p_{(s,\widetilde{n}_{s})}\neq l_{s}\\ \forall s=1\hdots \frac{m}{2}+1}}\prod_{(u,v)\in E}\sigma_{l_{u},l_{v}}^{2}\prod_{s=1}^{\frac{m}{2}+1}\prod_{t=1}^{\widetilde{n}_{s}}\sigma_{l_{s},p_{(s,t)}}^{2}\label{eq:pre_def_S1}
\end{align}
where for a fixed $s\in \{1,\cdots, 1+m/2\}$, $p_{(s,i)}$ denotes the index $p_{i}$ in \eqref{eq:specify_p(s,i)}. We modify the graph $T(\sigma)$ as follows. We take vertex $s$ from $T(\sigma)$ and join $\widetilde{n}_{s}$ many vertices to it, denote those by $\{(s,1),(s,2),\hdots,(s,\widetilde{n}_{s})\}$, so that $s$ becomes the internal node of a star graph with the vertices $\{(s,1),(s,2),\hdots,(s,\widetilde{n}_{s})\}$ forming the leaves. An example of the modification is shown in Figure \ref{fig:Star_s}. 
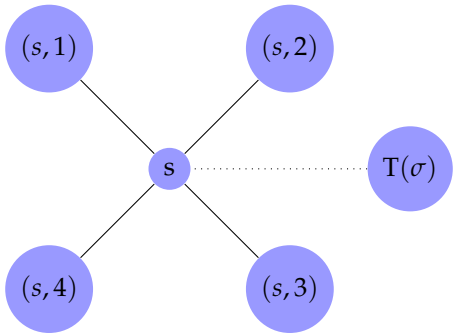
\begin{figure}[!ht]
\centering
\begin{tikzpicture}
  [scale=.8,auto=left,every node/.style={circle,fill=blue!40}]
  \node (n6) at (0,0) {s};
  \node (n5) at (2,2)  {$(s,2)$};
  \node (n4) at (-2,2) {$(s,1)$};
  \node (n3) at (-2,-2)  {$(s,4)$};
  \node (n2) at (2,-2)  {$(s,3)$};
  \node (n1) at (4,0) {T$(\sigma)$};

  \foreach \from/\to in {n6/n5,n6/n4,n6/n3,n6/n2}
    \draw (\from) -- (\to);
  \foreach \from/\to in {n1/n6}
    \draw[dotted] (\from) -- (\to);
\end{tikzpicture}
\caption{Star graph around vertex $s$, with $\widetilde{n}_{s}=4$}\label{fig:Star_s}
\end{figure}
Consider the new graph to be $\widetilde{T}(\sigma)=\left(\widetilde{V}(\sigma),\widetilde{E}(\sigma)\right)$, where $|\widetilde{V}(\sigma)|=\frac{1}{2}\sum_{p=1}^{k}m_{p}+\frac{1}{2}\sum_{s}\widetilde{n}_{s}+1=\frac{k}{2}+1$ and $|\widetilde{E}(\sigma)|=\frac{m}{2}+\sum_{s}\widetilde{n}_{s}=\frac{k}{2}$. Then observe that by construction $\widetilde{T}(\sigma)$ is a tree and the homomorphism density $t\left(\widetilde{T}(\sigma),W_{N}\right)$ becomes
\begin{align}
    t\left(\widetilde{T}(\sigma),W_{N}\right)
    &=\int_{[0,1]^{1+\sum \frac{m_{p}+n_{p}}{2}}}\prod_{(u,v)\in \widetilde{E}}W_{N}(x_{u},x_{v})\prod_{u}dx_{u}\nonumber\\
    &=\sum_{\substack{l_{1}\hdots l_{m/2+1}\\p_{(s,1)}\hdots p_{(s,\widetilde{n}_{s})}\\ \forall s=1\hdots \frac{m}{2}+1}}\int_{\otimes_{j=1}^{\frac{m}{2}+1}I_{l_{j}}\otimes_{s=1}^{\frac{m}{2}+1}\otimes_{t=1}^{\widetilde{n}_{s}}I_{p_{(s,t)}}}\prod_{(u,v)\in\widetilde{E}}W_{N}(x_{u},x_{v})\prod_{u}dx_{u}\nonumber\\
    &=\frac{1}{N^{1+\frac{\sum m_{p}+n_{p}}{2}}}\sum_{\substack{l_{1}\hdots l_{m/2+1}\\p_{(s,1)},\hdots p_{(s,\widetilde{n}_{s})}\\ \forall s=1,\hdots \frac{m}{2}+1}}\prod_{(u,v)\in E}\sigma_{l_{u},l_{v}}^{2}\prod_{s=1}^{\frac{m}{2}+1}\prod_{t=1}^{\widetilde{n}_{s}}\sigma_{l_{s},p_{(s,t)}}^{2}\label{eq:form_of_t}\\
    &\overset{N\rightarrow\infty}{\longrightarrow}t\left(\widetilde{T}(\sigma),W\right):=\beta(\sigma)\label{eq:graphon_convg_tree_sigma}
\end{align}
where the last limit follows from assumption \ref{itm:A1}. The expression from \eqref{eq:pre_def_S1} is redefined as
\begin{align*}
    S_{1}=\frac{1}{N^{1+\frac{\sum m_{p}+n_{p}}{2}}}\sum_{\substack{l_{1}\neq\hdots\neq l_{m/2+1}\\p_{(s,1)}\hdots p_{(s,\widetilde{n}_{s})}\neq l_{s}\\ \forall s=1\hdots \frac{m}{2}+1}}\prod_{(u,v)\in E}\sigma_{l_{u},l_{v}}^{2}\prod_{s=1}^{\frac{m}{2}+1}\prod_{t=1}^{\widetilde{n}_{s}}\sigma_{l_{s},p_{(s,t)}}^{2}
\end{align*}
Then using \eqref{eq:form_of_t} and a counting argument it follows that
\begin{align*}
    \left|S_{1}-t\left(\widetilde{T}(\sigma),W_{N}\right)\right|=O\left(N^{-1-\frac{\sum m_{p}+n_{p}}{2}}N^{\frac{\sum m_{p}+n_{p}}{2}}\right)=O\left(\frac{1}{N}\right)\overset{N\rightarrow \infty}{\longrightarrow 0}
\end{align*}
Hence using homomorphism density convergence from \eqref{eq:graphon_convg_tree_sigma} we conclude
\begin{align}
    S_{1}=\frac{1}{N^{1+\frac{\sum m_{p}+n_{p}}{2}}}\sum_{\substack{l_{1}\neq\hdots\neq l_{m/2+1}\\p_{(s,1)}\hdots p_{(s,\widetilde{n}_{s})}\neq l_{s}\\ \forall s=1\hdots \frac{m}{2}+1}}\prod_{(u,v)\in E}\sigma_{l_{u},l_{v}}^{2}\prod_{s=1}^{\frac{m}{2}+1}\prod_{t=1}^{\widetilde{n}_{s}}\sigma_{l_{s},p_{(s,t)}}^{2}\overset{N\rightarrow \infty}{\longrightarrow}t(\widetilde{T}(\sigma),W)
\end{align}
Recall that by construction $\widetilde{T}(\sigma)$ depends only on $\sigma$ for fixed values of $m_{i}, n_{i}$ for all $1\leq i\leq k$. Remember that we took \eqref{eq:even} holding for all $u\in K(\sigma)$. Hence redefine
\begin{align}\label{eq:graphon_convg_tree_sigma_1}
    \beta(\sigma)=
    \begin{cases}
        t\left(\widetilde{T}(\sigma),W\right) & \text{ if \eqref{eq:even} holds for all $u\in K(\sigma)$}\\
        0 & \text{ otherwise}
    \end{cases}
\end{align}
Also recall that $\mathbb{E}\left[\prod_{j=1}^{k}Z_{i_{1+\sum_{p=1}^{j}m_{p}}}^{n_{j}}\right]$ does not depend on choice of $i$, rather it only depends on $\sigma$. Define 
\begin{align}\label{eq:E_sigma}
    \mathcal{E}(\sigma)=\mathbb{E}\left[\prod_{j=1}^{k}Z_{i_{1+\sum_{p=1}^{j}m_{p}}}^{n_{j}}\right].
\end{align}
Note that by the discussion following \eqref{eq:odd}, $\mathcal{E}(\sigma)$ becomes 0 if \eqref{eq:even} does not hold for all $u\in K(\sigma)$.
Then expectation of \eqref{eq:trace_exp_i1_im} becomes
\begin{align}\label{eq:L2_expression}
    \frac{1}{N}\sum_{i\in S(\sigma,N)}\mathbb{E}\left[\prod_{j=1}^{\sum m_{p}}G_{i_{j}\wedge i_{j+1},i_{j}\vee i_{j+1}}\right]
    &\prod_{j=1}^{\sum m_{p}}\frac{\sigma_{i_{j},i_{j+1}}}{\sqrt{N}}\nonumber\\
    &\prod_{j=1}^{k}\left(\frac{1}{N}\sum_{t\neq i_{1+\sum_{p=1}^{j}m_{p}}}\sigma_{i_{1+\sum_{p=1}^{j}m_{p}},t}^{2}\right)^{\frac{n_{j}}{2}}\mathbb{E}\left[\prod_{j=1}^{k}Z_{i_{1+\sum_{p=1}^{j}m_{p}}}^{n_{j}}\right]\nonumber\\
    &\overset{N\rightarrow\infty}{\longrightarrow}\beta(\sigma)\mathcal{E}(\sigma)
\end{align}
where $\beta(\sigma)$ and $\mathcal{E}(\sigma)$ depends only on the choice of $m_{i},n_{i}$ for all $i=1,2,\hdots k$ and the corresponding partition $\sigma$. (Using \eqref{eq:m_oddNC2} we can safely replace the above expression by 0 when $\sum_{1}^{k}m_{p}$ is odd).

Now observe that we have shown convergence of expectation. But our objective was to show $L_{2}$ convergence. For any $i\in\{1,2,\hdots,N\}^{m}$ define:
\begin{align*}
    P_{i}=\prod_{j=1}^{m}G_{i_{j}\wedge i_{j+1},i_{j}\vee i_{j+1}}\prod_{j=1}^{m}\frac{\sigma_{i_{j},i_{j+1}}}{\sqrt{N}}\prod_{j=1}^{k}\left(\frac{1}{N}\sum_{t\neq i_{1+\sum_{p=1}^{j}m_{p}}}\sigma_{i_{1+\sum_{p=1}^{j}m_{p}},t}^{2}\right)^{\frac{n_{j}}{2}}\prod_{j=1}^{k}Z_{i_{1+\sum_{p=1}^{j}m_{p}}}^{n_{j}}
\end{align*}
and by \eqref{eq:L2_expression} we have $\mathbb{E}\left[\frac{1}{N}\sum_{i\in S(\sigma,N)}P_{i}\right]\rightarrow\beta(\sigma)\mathcal{E}(\sigma)$. To show that $$\frac{1}{N}\sum_{i\in S(\sigma,N)}P_{i}\overset{L_{2}}{\rightarrow}\beta(\sigma)\mathcal{E}(\sigma),$$ it is enough to show that $$\mathbb{E}\left[\left(\frac{1}{N}\sum_{i\in S(\sigma,N)}P_{i}\right)^{2}\right]\rightarrow\beta^{2}(\sigma)\mathcal{E}^{2}(\sigma).$$
With that goal observe that 
$$
    \mathbb{E}\left[\frac{1}{N}\sum_{i\in S(\sigma,N)}P_{i}\right]^{2}\rightarrow\beta^{2}(\sigma)\mathcal{E}^{2}(\sigma).
$$
Let us call $i,j\in\mathbb{N}^{m}$ to be disjoint if no co-ordinate of $i$ matches any co-ordinates of $j$ i.e. $\min_{1\leq u,v\leq m}|i_{u}-j_{v}|\geq 1$.
Now observe
\begin{align}
    \bigg[\mathbb{E}\bigg[\frac{1}{N}\sum_{i\in S(\sigma,N)}P_{i}\bigg]\bigg]^{2}
    &=\frac{1}{N^{2}}\sum_{i,j\in S(\sigma,N)}\mathbb{E}(P_{i})\mathbb{E}(P_{j})\nonumber\\
    &=\frac{1}{N^{2}}\sum_{\substack{i,j\in S(\sigma,N) \\ \text{ are disjoint }}}\mathbb{E}(P_{i})\mathbb{E}(P_{j})+\frac{1}{N^{2}}\sum_{\substack{i,j\in S(\sigma,N) \\ \text{ are not disjoint }}}\mathbb{E}(P_{i})\mathbb{E}(P_{j})\label{eq:Piconvg2}
\end{align}
If $i$ and $j$ are not disjoint then there is at least one common coordinate. Which implies there is a block in $i$ having same values as some block in $j$. Then we have 
\begin{align}
    &\{i,j\in S(\sigma,N) \text{ $i,j$ are not distinct}\}=\bigsqcup_{l=1}^{\frac{m}{2}+1}\{i,j\in S(\sigma,N) \text{ $i,j$ have exactly $l$ blocks common}\}\label{eq:sum_break}
\end{align}
where $\bigsqcup$ denotes disjoint union and define for $l=1,2,\hdots \left(\frac{m}{2}+1\right)$,
\begin{align*}
    &H_{l}=\{i,j\in S(\sigma,N): \,\, \text{ $i,j$ have exactly $l$ blocks common}\}
\end{align*}
By \eqref{eq:E(P_i)} we have
\begin{align}\label{eq:E(P_i)<MN^(power)}
    \mathbb{E}\left[P_{i}\right]
    &=N^{-\frac{m}{2}}\Phi(K(\sigma))\prod_{j=1}^{m}\sigma_{i_{j},i_{j+1}}\prod_{j=1}^{k}\left(\frac{1}{N}\sum_{t\neq i_{1+\sum_{p=1}^{j}m_{p}}}\sigma_{i_{1+\sum_{p=1}^{j}m_{p}},t}^{2}\right)^{\frac{n_{j}}{2}}\mathbb{E}\left[\prod_{j=1}^{k}Z_{i_{1+\sum_{p=1}^{j}m_{p}}}^{n_{j}}\right]\nonumber\\
    &\leq MN^{-\frac{m}{2}}
\end{align}
for some $0<M<\infty$, which depends only on $\sigma$. Observe that the above bound follows from \ref{itm:A1}. 
Then using \eqref{eq:E(P_i)<MN^(power)}, we have for all $l\in\left\{1,2,\hdots,\left(\frac{m}{2}+1\right)\right\}$,
\begin{align}
    \frac{1}{N^{2}}\sum_{i,j\in H_{l}}\mathbb{E}(P_{i})\mathbb{E}(P_{j})\preceq \frac{1}{N^{m+2}}\sum_{i,j\in H_{l}}1\preceq \frac{1}{N^{m+2}}O(N^{m+2-k})\rightarrow 0 \text{ as } N\rightarrow\infty \label{eq:PiPj 0}
\end{align}
Using \eqref{eq:PiPj 0} we conclude
\begin{align*}
    &\lim_{N\rightarrow\infty}\frac{1}{N^{2}}\sum_{i,j\in H_{l}}\mathbb{E}(P_{i})\mathbb{E}(P_{j})=0
\end{align*}
Then using \eqref{eq:sum_break} we have
\begin{align}
    \lim_{N\rightarrow\infty}\frac{1}{N^{2}}\sum_{\substack{i,j\in S(\sigma,N) \\ \text{ are not disjoint }}}\mathbb{E}(P_{i})\mathbb{E}(P_{j})=\lim_{N\rightarrow\infty}\sum_{l=1}^{\frac{m}{2}+1}\frac{1}{N^{2}}\sum_{i,j\in H_{l}}\mathbb{E}(P_{i})\mathbb{E}(P_{j})=0\label{eq:Pi not distinct}
\end{align}
Combining \eqref{eq:Pi not distinct} and \eqref{eq:Piconvg2} we have 
\begin{align}
    \lim_{N\rightarrow\infty}\frac{1}{N^{2}}\sum_{\substack{i,j\in S(\sigma,N) \\ \text{ are disjoint }}}\mathbb{E}(P_{i})\mathbb{E}(P_{j})=\beta^{2}(\sigma)\mathcal{E}^{2}(\sigma)\label{eq:L2convg1}
\end{align}
We have
\begin{align}
    \mathbb{E}\bigg[\bigg(\frac{1}{N}\sum_{i\in S(\sigma,N)}P_{i}\bigg)^{2}\bigg]
    &=\frac{1}{N^{2}}\sum_{i,j\in S(\sigma,N)}\mathbb{E}(P_{i}P_{j})\nonumber\\
    &=\frac{1}{N^{2}}\sum_{\substack{i,j\in S(\sigma,N)\\ \text{ are disjoint }}}\mathbb{E}(P_{i})\mathbb{E}(P_{j})+\frac{1}{N^{2}}\sum_{\substack{i,j\in S(\sigma,N) \text{ are not disjoint }}}\mathbb{E}(P_{i}P_{j})\label{eq:L2convg2}
\end{align}
Then for $i,j\in S(\sigma,N)$ such that they have at least one common coordinate, we have
\begin{align}\label{eq:E(PiPj)}
    \mathbb{E}(P_{i}P_{j})
    &=N^{-m}\widetilde{\Phi}_{i,j}\prod_{l=1}^{m}\sigma_{i_{l},i_{l+1}}^{2}\prod_{l=1}^{k}\left[\frac{1}{N}\sum_{t\neq i_{1+\sum_{p=1}^{l}m_{p}}}\sigma_{i_{1+\sum_{p=1}^{l}m_{p}},t}\right]^{\frac{n_{l}}{2}}\nonumber\\
    &\prod_{q=1}^{m}\sigma_{j_{q},j_{q+1}}^{2}\prod_{q=1}^{k}\left[\frac{1}{N}\sum_{t\neq j_{1+\sum_{p=1}^{q}m_{p}}}\sigma_{j_{1+\sum_{p=1}^{q}m_{p}},t}\right]^{\frac{n_{q}}{2}}\mathbb{E}\left[\prod_{l=1}^{k}Z_{i_{1+\sum_{p=1}^{l}m_{p}}}^{n_{l}}Z_{j_{1+\sum_{p=1}^{l}m_{p}}}^{n_{l}}\right]
\end{align}
where
\begin{align*}
    \widetilde{\Phi}_{i,j}=\mathbb{E}\left[\prod_{l=1}^{m}G_{i_{l}\wedge i_{l+1},i_{l}\vee i_{l+1}}G_{j_{l}\wedge j_{l+1},j_{l}\vee j_{l+1}}\right]
\end{align*}
 It is easy to observe that right hand side of \eqref{eq:E(PiPj)} is bounded by some constant depending on $\sigma\in NC_{2}(m)$. Then as before we have $\mathbb{E}(P_{i}P_{j})\leq \widetilde{M}N^{-m}$ for some $\widetilde{M}>0$, depending on $\sigma$, this again follows similarly as above from \ref{itm:A1}.
Then second term in \eqref{eq:L2convg2} goes to 0 by similar counting argument as in \eqref{eq:PiPj 0}. Then combining \eqref{eq:L2convg1} and \eqref{eq:L2convg2} we have
\begin{align*}
    \lim_{N\rightarrow\infty}\mathbb{E}\bigg[\bigg(\frac{1}{N}\sum_{i\in S(\sigma,N)}P_{i}\bigg)^{2}\bigg]=\beta^{2}(\sigma)\mathcal{E}^{2}(\sigma)
\end{align*}
Hence we have shown that
\begin{align*}
    \frac{1}{N}\sum_{i\in S(\sigma,N)}P_{i}\overset{L_{2}}{\rightarrow}\beta(\sigma)\mathcal{E}(\sigma)
\end{align*}
Now we want to show that $\frac{1}{N}\sum_{i\in C(m,N)}P_{i}\rightarrow 0$ in $L^{2}$. Observe here $m$ can be odd or even. Define $D=\{\Pi\in\text{ all partitions of }[m] \text{ such that }\Pi\neq K(\sigma), \,\, \forall \sigma\in NC_{2}(m)\}$. Observe that 
\begin{align*}
    \frac{1}{N}\sum_{i\in C(m,N)}P_{i}=\frac{1}{N}\sum_{\Pi\in D}\sum_{i\in\Psi(\Pi,N)}P_{i}
\end{align*}
So enough to show $$\frac{1}{N}\sum_{i\in\Psi(\Pi,N)}P_{i}\overset{L^{2}}{\rightarrow}0 \text{ for all $\Pi\in D$}.$$ With that objective fix $\Pi\in D$, 
\begin{align*}
    \mathbb{E}\bigg[\bigg(\frac{1}{N}\sum_{i\in\Psi(\Pi,N)}P_{i}\bigg)^{2}\bigg]
    &=\frac{1}{N^{2}}\sum_{i,j\in\Psi(\Pi,N)}\mathbb{E}(P_{i}P_{j})\\
    &=\frac{1}{N^{2}}\sum_{\substack{i,j\in \Psi(\Pi,N) \\ \text{ are disjoint }}}\mathbb{E}(P_{i})\mathbb{E}(P_{j})+\frac{1}{N^{2}}\sum_{\substack{i,j\in \Psi(\Pi,N) \\ \text{ are not disjoint }}}\mathbb{E}(P_{i}P_{j})
\end{align*}
Define
\begin{align*}
    &E(\Pi)=\{i,j\in \Psi(\Pi,N): \text{ $i,j$ are disjoint}\}\\
    &H_{l}(\Pi)=\{i,j\in \Psi(\Pi,N): \text{ $i,j$ have exactly $l$ blocks common}\}
\end{align*}
Suppose number of blocks in $\Pi$ is $b(\Pi)$. Then observe
\begin{align*}
    &\{i,j\in \Psi(\Pi,N): \text{ $i,j$ are not distinct}\}=\bigsqcup_{k=1}^{b(\Pi)}H_{k}(\Pi)
\end{align*}
Recall that $\Phi(\Pi)$ is independent of choice of $i,j\in\Pi$, it only depends upon $\Pi$. Then
\begin{align}
    0\leq\frac{1}{N^{2}}\sum_{i,j\in E(\Pi)}E(P_{i})E(P_{j})
    &\preceq \frac{1}{N^{m+2}}\Phi(\Pi)^{2}|E(\Pi)|\frac{1}{|E(\Pi)|}\sum_{i,j\in E(\Pi)}1\\
    &\leq \bigg(\frac{1}{N^{\frac{m}{2}+1}}\Phi(\Pi)\#\Psi(\Pi,N)\bigg)^{2}\rightarrow 0 \text{ as } N\rightarrow\infty.\label{eq:Equal_limit}
\end{align}
The last limit follows from \eqref{eq:Spectra_Paper_eqn}. Using above definitions we have
\begin{align}\label{eq:Decom_distinct}
    \frac{1}{N^{2}}\sum_{\substack{i,j\in \Psi(\Pi,N) \\ \text{ are not disjoint }}}\mathbb{E}(P_{i}P_{j})=\frac{1}{N^{2}}\sum_{k=1}^{b(\Pi)}\sum_{i,j\in H_{k}(\Pi)}\mathbb{E}(P_{i}P_{j})
\end{align}
Let us look at the case when we have exactly one block common in $i$ and $j$. Then $|H_{1}(\Pi)|=O(N^{2b(\Pi)-1})$. It is easy to see that there exists $h>0$, depending on $\Pi$ such that 
$$
    \mathbb{E}(P_{i}P_{j})\leq hN^{-m}, \,\,  \forall i,j\in \Psi(\Pi,N).
$$
The above bound follows from the discussion after \eqref{eq:E(PiPj)}, replacing $K(\sigma)$ by $\Pi$. Now consider two cases depending upon the values of $b(\Pi)$.\\

\textit{Case 1:} Let $b(\Pi)\leq \frac{m}{2}+1$. Then we have
\begin{align}
    \frac{1}{N^{2}}\sum_{i,j\in H_{1}(\Pi)}\mathbb{E}(P_{i}P_{j})\preceq O\left(N^{-m-2}N^{m+1}\right)=O(N^{-1})\overset{N\rightarrow\infty}{\longrightarrow} 0.
\end{align}

\textit{Case 2:} Let $b(\Pi)>\frac{m}{2}+1$. Observe
\begin{align*}
    \mathbb{E}\left(P_{i}P_{j}\right)\leq\frac{C}{N^{m}}\mathbb{E}\left[\prod_{l=1}^{m}G_{i_{l}\wedge i_{l+1},i_{l}\vee i_{l+1}}\prod_{l=1}^{m}G_{j_{l}\wedge j_{l+1},j_{l}\vee j_{l+1}}\right],
\end{align*}
for some constant $C>0$. Now consider the two closed walks (remember we had $i_{m+1}=i_{1}$ and $j_{m+1}=j_{1}$)
\[
   i_{1}\rightarrow i_{2}\rightarrow i_{3}\rightarrow\hdots i_{m}\rightarrow i_{m+1} \text{ and }
    j_{1}\rightarrow j_{2}\rightarrow j_{3}\rightarrow\hdots j_{m}\rightarrow j_{m+1}.
\]
Now join them at the common coordinates that is where $i_{p}=j_{q}$ and for some $p,q\in\{1,\cdots, m\}$ and glue together the coordinates of $i$ which are in the same blocks of $\Pi$, and do the same for $j$. Then consider the graph $\widehat{G}=\left(\widehat{V},\widehat{E}\right)$ and then we have
\begin{align}\label{eq:construction_equiv}
    \prod_{l=1}^{m}G_{i_{l}\wedge i_{l+1},i_{l}\vee i_{l+1}}\prod_{l=1}^{m}G_{j_{l}\wedge j_{l+1},j_{l}\vee j_{l+1}}=\prod_{e\in \widehat{E}}G_{e}^{t_{e}}
\end{align}
and hence
\begin{align}\label{eq:graph_case2_estimate}
    \mathbb{E}(P_{i}P_{j})\leq\frac{C}{N^{m}}\mathbb{E}\left[\prod_{e\in \widehat{E}}G_{e}^{t_{e}}\right]
\end{align}
where $t_{e}$ is the number of times the edge $e$ is repeated in the graph $\widehat{G}$. Observe that each edge is repeated at least once, since the two closed walks were connected graph and in the resultant graph we did not remove any edge. So if $\mathbb{E}(P_{i}P_{j})\neq 0$, then $\forall e\in \widehat{E}, t_{e}\geq 2$. Since $i,j\in H_{1}(\Pi)$, then total number of distinct block in the combined vector $(i,j)$ would be $2b(\Pi)-1$. Now by construction the blocks form the vertices of $\widehat{G}$, hence as the graph is connected then $|\widehat{E}|\geq 2b(\Pi)-2$. Then $$\sum_{e\in\widehat{E}}t_{e}\geq2(2b(\Pi)-2)>2\left(2\left(\frac{m}{2}+1\right)-2\right)=2m ,$$ but by \eqref{eq:construction_equiv} we have $\sum_{e\in\widehat{E}}t_{e}=2m$. Hence a contradiction which implies that in this case $\mathbb{E}(P_{i}P_{j})=0,\forall i,j\in H_{1}(\Pi)$.\\
\\
So we have shown that 
\begin{align}
    \frac{1}{N^{2}}\sum_{i,j\in H_{1}(\Pi)}\mathbb{E}(P_{i}P_{j})\overset{N\rightarrow\infty}{\longrightarrow}0
\end{align}
Similarly we can draw the same conclusion for any $l\in\left\{1,2,\hdots, b(\Pi)\right\}$. Hence we can conclude by \eqref{eq:Decom_distinct} that
\begin{align}\label{eq:distinct_limit}
    \frac{1}{N^{2}}\sum_{\substack{i,j\in\Psi(\Pi,N) \\ \text{ are not distinct}}}\mathbb{E}\left(P_{i}P_{j}\right)\overset{N\rightarrow\infty}{\longrightarrow}0
\end{align}
Then combining \eqref{eq:Equal_limit} and \eqref{eq:distinct_limit} we have proved the $L_{2}$ convergence for $C(m,N)$. 
Hence using \eqref{eq:trace_expansion} we have
\begin{align}
    \frac{1}{N}\Tr\left(\widetilde{\Delta}_{N}^{k}\right)\overset{L_{2}}{\rightarrow}\sum_{\mathcal{P}_{k}}\sum_{\sigma\in NC_{2}(\sum m_{p})}\beta(\sigma)\mathcal{E}(\sigma)
\end{align}
where $\mathcal{P}_{k}$ is as defined in Section \ref{sec:moment_description}.\\

\textbf{Case 2: $k$ is odd}\\

Recall the expansion of moment expression as in \eqref{eq:trace_expansion} in the beginning of the previous case.
\begin{align}\label{eq:odd_expr}
    \frac{1}{N}\Tr\left(\widetilde{\Delta}_{N}^{k}\right)=\frac{1}{N}\sum_{\substack{m_{1},m_{2}\hdots,m_{k}\\n_{1},n_{2}\hdots,n_{k}}}\Tr\left(\Bar{A}_{N}^{m_{1}}Y_{N}^{n_{1}}\hdots\Bar{A}_{N}^{m_{k}}Y_{N}^{n_{k}}\right)
\end{align}
where the sum is over all the terms in the expansion of $\left(\Bar{A}_{N}+Y_{N}\right)^{k}$, that is we have $2^{k}$ many terms and for every choice of $m_{1},n_{1},\hdots,m_{k},n_{k}$ we have $\sum_{p=1}^{k} m_{p}+n_{p}=k$. So in order to show $\frac{1}{N}\Tr\left(\widetilde{\Delta}_{N}^{k}\right)\overset{L_{2}}{\rightarrow}0$, it is enough to show that 
$$
    \frac{1}{N}\Tr\left(\Bar{A}_{N}^{m_{1}}Y_{N}^{n_{1}}\hdots\Bar{A}_{N}^{m_{k}}Y_{N}^{n_{k}}\right)\overset{L_{2}}{\rightarrow}0
$$ Then due to \ref{itm:A1} we have
\begin{align}\label{eq:Trace_odd_bound}
    \left|\frac{1}{N}\Tr\left(\Bar{A}_{N}^{m_{1}}Y_{N}^{n_{1}}\hdots\Bar{A}_{N}^{m_{k}}Y_{N}^{n_{k}}\right)\right|\preceq 
    &\frac{1}{N}\sum_{i_{1}\hdots i_{\sum m_{p}}}\left|\prod_{j=1}^{\sum m_{p}}G_{i_{j}\wedge i_{j+1},i_{j}\vee i_{j+1}}\right|\prod_{j=1}^{\sum m_{p}}\frac{1}{\sqrt{N}}\left|\prod_{j=1}^{k}Z_{i_{1+\sum_{p=1}^{j}m_{p}}}^{n_{j}}\right|
\end{align}
\textbf{Sub-Case 1:} $\sum_{p=1}^{k}m_{p}$ is odd\\

Observe that from \eqref{eq:Trace_odd_bound},
\begin{align*}
    \mathbb{E}\left[\frac{1}{N^{2}}\Tr^{2}\left(\Bar{A}_{N}^{m_{1}}Y_{N}^{n_{1}}\hdots\Bar{A}_{N}^{m_{k}}Y_{N}^{n_{k}}\right)\right]
    &\preceq\mathbb{E}\left[\frac{1}{N^{2+\sum m_{p}}}\sum_{i,j}\prod_{l=1}^{\sum m_{p}}G_{i_{l}\wedge i_{l+1},i_{l}\vee i_{l+1}}\prod_{l=1}^{\sum m_{p}}G_{j_{l}\wedge j_{l+1},j_{l}\vee j_{l+1}}\right]\nonumber\\
    &\mathbb{E}\left[\prod_{l=1}^{k}Z_{i_{1+\sum_{p=1}^{l}m_{p}}}^{n_{l}}Z_{j_{1+\sum_{p=1}^{l}m_{p}}}^{n_{l}}\right]
\end{align*}
where $i,j$ are the $m$ dimensional vectors defined in \eqref{eq:trace_exp_i1_im}. The last term is uniformly bounded over $i,j$. (Since there are finitely such values depending on the partitions of $\{1,\cdots,m\}$, where $m=\sum_{j=1}^{k}m_{j}$). Then 
\begin{align*}
    \mathbb{E}\left[\frac{1}{N^{2}}\Tr^{2}\left(\Bar{A}_{N}^{m_{1}}Y_{N}^{n_{1}}\hdots\Bar{A}_{N}^{m_{k}}Y_{N}^{n_{k}}\right)\right]
    &\preceq\mathbb{E}\left[\frac{1}{N^{2+\sum m_{p}}}\sum_{i,j}\prod_{l=1}^{\sum m_{p}}G_{i_{l}\wedge i_{l+1},i_{l}\vee i_{l+1}}\prod_{l=1}^{\sum m_{p}}G_{j_{l}\wedge j_{l+1},j_{l}\vee j_{l+1}}\right]\nonumber\\
    &\leq\mathbb{E}\bigg[\bigg(\frac{1}{N}\Tr(U_{N}^{\sum m_{p}})\bigg)^{2}\bigg]\overset{N\rightarrow\infty}{\longrightarrow}0
\end{align*}
where $U_{N}$ is a $N\times N$ Wigner matrix having all entries i.i.d. from $\mathrm{N}(0,1/N)$ and satisfying the symmetry constraint. Observe that $\frac{1}{N}\Tr(U_{N}^{\sum m_{p}})$ is the $(\sum m_{p})^{th}$ moment of ESD$(Z_{N})$. The limit follows by the classical estimates for Wigner matrices with i.i.d.\ entries (see for example \citet{anderson2010introduction}).\\
\\
\textbf{Sub-Case 2:} $\sum_{p=1}^{k}m_{p}$ is even\\

Define 
$$
    \widetilde{P}_{i}=\frac{1}{N^{\sum m_{p}/2}}\prod_{l=1}^{\sum m_{p}}G_{i_{l}\wedge i_{l+1},i_{l}\vee i_{l+1}}
$$
So we have to show
\begin{align*}
    \frac{1}{N}\sum_{i}\widetilde{P}_{i}\prod_{l=1}^{k}Z_{i_{1+\sum_{p=1}^{l}m_{p}}}^{n_{l}}\overset{L_{2}}{\longrightarrow}0
\end{align*}
Now take $\Pi$ to be a partition of $\{1,\cdots ,m\}$, then due to a decomposition similar to \eqref{eq:sum_break_Kreweras} enough to show that 
\begin{align*}
    \frac{1}{N}\sum_{i\in\Psi(\Pi,N)}\widetilde{P}_{i}\prod_{l=1}^{k}Z_{i_{1+\sum_{p=1}^{l}m_{p}}}^{n_{l}}\overset{L_{2}}{\longrightarrow}0
\end{align*}
Then observe
\begin{align*}
    &\mathbb{E}\left[\left[\frac{1}{N}\sum_{i\in\Psi(\Pi,N)}\widetilde{P}_{i}\prod_{l=1}^{k}Z_{i_{1+\sum_{p=1}^{l}m_{p}}}^{n_{l}}\right]^{2}\right]
    =\frac{1}{N^{2}}\sum_{E(\Pi)}\mathbb{E}(\widetilde{P}_{i}\widetilde{P}_{j})\mathbb{E}\left(\prod_{l=1}^{k}Z_{i_{1+\sum_{p=1}^{l}m_{p}}}^{n_{l}}\right)\mathbb{E}\left(\prod_{l=1}^{k}Z_{j_{1+\sum_{p=1}^{l}m_{p}}}^{n_{l}}\right)\nonumber\\
    &+\frac{1}{N^{2}}\sum_{\substack{i\text{ and }j \\ \text{ are not distinct}} }\mathbb{E}\left(\widetilde{P}_{i}\widetilde{P}_{j}\prod_{l=1}^{k}Z_{i_{1+\sum_{p=1}^{l}m_{p}}}^{n_{l}}\prod_{l=1}^{k}Z_{j_{1+\sum_{p=1}^{l}m_{p}}}^{n_{l}}\right)
\end{align*}
Now since $k$ is odd, then $\sum m_{p}+n_{p}=k$ is odd, and hence $\sum n_{p}$ is odd. Hence it's easy to see that the first term is $0$. So we focus on the second term only. Using independence and the uniform bound of 
$$
    \mathbb{E}\left(\prod_{l=1}^{k}Z_{i_{1+\sum_{p=1}^{l}m_{p}}}^{n_{l}}\prod_{l=1}^{k}Z_{j_{1+\sum_{p=1}^{l}m_{p}}}^{n_{l}}\right)
$$
we only focus on $\mathbb{E}(\widetilde{P}_{i}\widetilde{P}_{j})$. Similar to \eqref{eq:Decom_distinct} it is enough to show that
\begin{align*}
    \frac{1}{N^{2}}\sum_{H_{k}(\Pi)}\mathbb{E}(\widetilde{P}_{i}\widetilde{P}_{j})\rightarrow 0
\end{align*}
Consider $b(\Pi)$ to be number of blocks in $\Pi$ and remember $m=\sum m_{p}$. Let us look at the case when we have exactly one block common in $i$ and $j$. Then $|H_{1}(\Pi)|=O(N^{2b(\Pi)-1})$. It is easy to see that  there exists $h>0$, depending on $\Pi$ such that $\mathbb{E}(\widetilde{P}_{i}\widetilde{P}_{j})\leq hN^{-m}$ for all $i,j\in \Psi(\Pi,N)$. This essentially follows as above in previous case. Now consider two cases depending upon the values of $b(\Pi)$.

\textit{Case 1:} $b(\Pi)\leq \frac{m}{2}+1$.  Then we have $2b(\Pi)-1\leq m+1$, which implies
\begin{align*}
    \frac{1}{N^{2}}\sum_{i,j\in H_{1}(\Pi)}\mathbb{E}(\widetilde{P}_{i}\widetilde{P}_{j})\preceq O\left(N^{-m-2}N^{m+1}\right)=O(N^{-1})\overset{N\rightarrow\infty}{\longrightarrow} 0
\end{align*}

\textit{Case 2:} $b(\Pi)>\frac{m}{2}+1$. Observe
\begin{align*}
    \mathbb{E}\left(\widetilde{P}_{i}\widetilde{P}_{j}\right)=\frac{1}{N^{m}}\mathbb{E}\left[\prod_{l=1}^{m}G_{i_{l}\wedge i_{l+1},i_{l}\vee i_{l+1}}\prod_{l=1}^{m}G_{j_{l}\wedge j_{l+1},j_{l}\vee j_{l+1}}\right]
\end{align*}
This case is exactly similar to Case 2 as before. Using the same estimates as in \eqref{eq:graph_case2_estimate} we can show 
\begin{align*}
    \frac{1}{N^{2}}\sum_{i,j\in H_{1}(\Pi)}\mathbb{E}(\widetilde{P}_{i}\widetilde{P}_{j})\overset{N\rightarrow\infty}{\longrightarrow}0.
\end{align*}
Similarly can conclude the same for any $k\in\left\{1,2,\hdots, b(\Pi)\right\}$. Hence we have
\begin{align*}
    \frac{1}{N^{2}}\sum_{\substack{i,j\in\Psi(\Pi,N) \\ \text{ are not distinct}}}\mathbb{E}\left(\widetilde{P}_{i}\widetilde{P}_{j}\right)\overset{N\rightarrow\infty}{\longrightarrow}0
\end{align*}
Then we have shown that \eqref{eq:odd_expr} converges to 0 in $L^{2}$, that is
\begin{align*}
    \frac{1}{N}\Tr\left(\Tilde{\Delta}_{N}^{k}\right)\overset{L_{2}}{\longrightarrow}0.
\end{align*}

\subsubsection{\textbf{Existence of unique limiting distribution}}

Before proceeding let us define
\begin{align*}
    \beta_{k}=
    \begin{cases}
        \sum_{\mathcal{P}_{k}}\sum_{\sigma\in NC_{2}(\sum m_{p})}\beta(\sigma)\mathcal{E}(\sigma) & \text{ if $k$ is even}\\
        0 & \text{ if $k$ is odd}
    \end{cases}
\end{align*}
then by Lemma B.1 from \citet{bai2010spectral} we can easily see that there exists a probability measure $\nu$ identified by the moments $\beta_{k}$. To show uniqueness of $\nu$ we invoke again Theorem 1 of \citet{lin2017recent}, that is show that
\begin{align}\label{eq:moment_lim_cond}
        \limsup_{k\rightarrow\infty}\frac{1}{2k}\beta_{2k}^{1/2k}<\infty
\end{align}
For all $l\in\{1,2,\hdots,\sum_{p=1}^{k}m_{p}\}$, define 
\begin{align*}
    \alpha_{l}=
    \begin{cases}
        n_{j} & \text{ if } l=1+\sum_{p=1}^{j}m_{p},\,\,\forall j=1,\hdots, k-1,\\
        n_{k} & \text{ if } l=1, \\
        0 & \text{ otherwise }.
    \end{cases}
\end{align*}
Recall that we use $1+\sum_{p=1}^{k}m_{p}$ as an identifier for $1$. Then it is easy to observe that 
\begin{align*}
    \prod_{j=1}^{k}Z_{i_{1+\sum_{p=1}^{j}m_{p}}}^{n_{j}}=\prod_{u\in K(\sigma)}Z_{u}^{\sum_{l\in u}\alpha_{l}},\quad\forall i\in \Psi(K(\sigma),N)
\end{align*}
where the product is taken over the blocks in $K(\sigma)$. Hence by definition of $\mathcal{E}(\sigma)$ we have 
\begin{align*}
    \mathcal{E}(\sigma)=\mathbb{E}\left[\prod_{u\in K(\sigma)}Z_{u}^{\sum_{l\in u}\alpha_{l}}\right]=\prod_{u\in K(\sigma)}\mathbb{E}\left[Z_{1}^{\sum_{l\in u}\alpha_{l}}\right].
\end{align*}
The above equality follows since $Z_{u}$'s are all i.i.d. N$(0,1)$ for all $u\in K(\sigma)$. Using the expression for moments of the Gaussian distribution,
\begin{align*}
    \prod_{u\in K(\sigma)}\mathbb{E}\left[Z_{1}^{\sum_{l\in u}\alpha_{l}}\right]
    &\leq \prod_{u\in K(\sigma)}\frac{\left(2\sum_{l\in u}\alpha_{l}\right)!}{2^{\sum_{l\in u}\alpha_{l}}\left(\sum_{l\in u}\alpha_{l}\right)!}\nonumber\\
    &\leq 2^{\sum_{u\in K(\sigma)}\sum_{l\in u}\alpha_{l}}\left(\sum_{u\in K(\sigma)}\sum_{l\in u}\alpha_{l}\right)!\nonumber\\
    &\leq 2^{k}k!
\end{align*}
The last inequality follows from $\sum_{j=1}^{k}n_{j}\leq k$. Observe that by definition of $\beta(\sigma)$ it can be easily seen that there exists a constant  $C>0$ such that $\beta(\sigma)\leq C^{k}$. Recall $C_{l}$ is the $l^{th}$ catalan number and $C_{a}\leq C_{b}$ whenever $a\leq b$, then taking $k\in 2\mathbb{N}$
\begin{align*}
    \beta_{k}
    &\leq \sum_{\mathcal{P}_{k}}\sum_{\sigma\in NC_{2}(\sum m_{p})}C^{k}2^{k}k!
    \leq \sum_{\mathcal{P}_{k}}C_{\sum m_{p}}(2C)^{k}k!\nonumber\\
    &\leq \sum_{\mathcal{P}_{k}}C_{k}(2C)^{k}k!
    \leq (4C)^{k}C_{k}k!
\end{align*}
The above inequalities follows since $\sum_{p=1}^{k}m_{p}\leq k$ and the sum over $\mathcal{P}_{k}$ can have at most $2^{k}$ many terms. 
Using Sterling's approximation we have
\begin{align*}
    \frac{1}{k}\beta_{k}^{\frac{1}{k}}\leq 4C\frac{1}{(k+1)^{\frac{1}{k}}}\frac{4e^{-\left(1+\frac{1}{k}\right)}}{\pi^{\frac{1}{k}}}
\end{align*}
and subsequently we conclude that
\begin{align}\label{eq:Mgf_limit_cond}
    \limsup_{k\rightarrow\infty}\frac{1}{2k}\beta_{2k}^{\frac{1}{2k}}<\infty.
\end{align}
Hence the measure $\nu$ is uniquely identified by the moments $\beta_{k}$. Finally note that the moment generating function of $\nu$ is finite around origin and the odd moments vanish it easily follows that $\nu$ is symmetric around origin.



\subsubsection{\textbf{$\nu$ has unbounded support}}
To prove the unbounded support we shall use the following lemma from \citet{chakrabarty2018spectra}. 
\begin{lemma}\citet[Fact A.5]{chakrabarty2018spectra}\label{Fact: A5}
Suppose that for al $n\geq 1, Z_{n1}\geq\hdots\geq Z_{nn}$ are random variables such that 
\begin{align}
    \lim_{n\rightarrow\infty}\frac{1}{n}\sum_{j=1}^{n}\delta_{Z_{nj}}=\mu \text{ weakly in probability},
\end{align}
for some probability measure $\mu$ on $\mathbb{R}$, where $\delta_{x}$ is the probability measure that puts mass 1 at $x$. Then,
\begin{align}
    \lim_{p\rightarrow 0}\limsup_{n\rightarrow\infty}Z_{n[np]}=\sup(\text{supp}(\mu)) \text{ almost surely}
\end{align}
where $[x]$ denotes the smallest integer larger than or equal to $x$.
\end{lemma}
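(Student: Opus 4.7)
Write $M:=\sup(\operatorname{supp}(\mu))\in(-\infty,\infty]$ and let $F, F_n$ denote the distribution functions of $\mu$ and of the empirical measure $\mu_n := n^{-1}\sum_{j=1}^{n}\delta_{Z_{nj}}$. The plan is to reduce the problem to a deterministic quantile computation via the identity that, since $Z_{n1}\ge\cdots\ge Z_{nn}$ are sorted in decreasing order,
\[
Z_{n[np]}>x \ \Longleftrightarrow\ \#\{j:Z_{nj}>x\}\ge [np] \ \Longleftrightarrow\ n\bigl(1-F_n(x)\bigr)\ge [np].
\]
Thus control of the order statistic $Z_{n[np]}$ is control of the right tail of $F_n$ at continuity points of $F$, which is exactly what weak convergence $\mu_n\to\mu$ supplies.

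The argument then splits into two inequalities for $L:=\lim_{p\to 0}\limsup_{n\to\infty} Z_{n[np]}$. For the upper bound $L\le M$ (vacuous if $M=\infty$): given $\varepsilon>0$, pick a continuity point $y\in(M,M+\varepsilon)$ of $F$; since $F(y)=1$, weak convergence yields $n(1-F_n(y))=o(n)$, so for every fixed $p>0$ and all $n$ large, $n(1-F_n(y))<[np]$, which by the identity above forces $Z_{n[np]}\le y<M+\varepsilon$. Taking $\limsup_n$, then $p\to 0$, then $\varepsilon\downarrow 0$ through rationals gives $L\le M$. For the lower bound $L\ge M$: choose any continuity point $y<M$ of $F$ (or, in case $M=\infty$, any large $y$); by the defining property of $M=\sup(\operatorname{supp}\mu)$, the number $\delta:=\mu((y,\infty))$ is strictly positive, and weak convergence of $\mu_n$ yields $1-F_n(y)\to\delta$. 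Fix any $p\in(0,\delta/2)$; then for all $n$ large, $n(1-F_n(y))\ge n\delta/2\ge [np]$, so $Z_{n[np]}>y$, whence $\limsup_n Z_{n[np]}\ge y$. Since $p\mapsto \limsup_n Z_{n[np]}$ is monotone nonincreasing in $p$ (smaller $p$ gives smaller index, hence larger order statistic), the limit as $p\to 0$ exists and also dominates $y$; letting $y\nearrow M$ along continuity points of $F$ completes the lower bound.

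The main obstacle is bridging weak-in-probability convergence to the almost-sure conclusion. I would fix a countable set $D$ of continuity points of $F$ that is cofinal in $(-\infty,M)$ (e.g.\ $\{M-1/k\}\cap\{\text{continuity points}\}$, or $\{k\}_{k\in\mathbb N}$ if $M=\infty$), together with one cofinal in $(M,M+1)$, and apply the standard fact that $\mu_n\to\mu$ weakly in probability on a separable metric space is equivalent to a.s.\ convergence along every subsequence up to a further subsequence, so that the countable family $\{F_n(y): y\in D\}$ converges almost surely to $\{F(y): y\in D\}$ on a single event of full measure. On that event the deterministic argument above applies, and monotonicity in $p$ reduces the double limit to its values along rational $p$, so all of the bounds hold off one common null set. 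The only edge case is $M=\infty$, handled by replacing ``$y\nearrow M$ through continuity points'' with ``$y=k\to\infty$'' and noting that the lower bound then forces $L=\infty$.
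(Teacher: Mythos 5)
Your deterministic skeleton is sound: the equivalence $Z_{n[np]}>x \iff n(1-F_n(x))\geq [np]$, the upper bound via continuity points $y>M$ with $F(y)=1$, the lower bound via $\mu((y,\infty))>0$ for $y<M$, and the monotonicity in $p$ are all correct. The paper offers no proof to compare against (it imports the lemma verbatim from \citet{chakrabarty2018spectra}), so the question is whether your argument stands on its own, and it does not: the bridge from the in-probability hypothesis to the almost-sure conclusion is broken. Convergence in probability of $F_n(y)$ to $F(y)$ for each $y$ in a countable set $D$ does \emph{not} yield almost-sure convergence of the family $\{F_n(y):y\in D\}$ on a single full-measure event. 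The subsequence characterization you invoke only produces, for each subsequence, a \emph{further} subsequence along which convergence is a.s., while $\limsup_{n\to\infty}Z_{n[np]}$ is a full-sequence object that cannot be evaluated along a subsequence of your choosing.

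This is not a fixable presentation issue, because the almost-sure conclusion is actually false under the stated hypothesis. Take $\mu=\delta_0$, let $(A_n)_{n\geq 1}$ be independent events with $\mathbb{P}(A_n)=1/n$, and set $Z_{nj}=\mathbf{1}_{A_n}$ for all $1\leq j\leq n$ (trivially sorted). Then $\frac1n\sum_{j}\delta_{Z_{nj}}$ equals $\delta_1$ on $A_n$ and $\delta_0$ otherwise, hence converges to $\delta_0$ weakly in probability; but by the second Borel--Cantelli lemma $A_n$ occurs infinitely often a.s., so $\limsup_{n}Z_{n[np]}=1$ a.s.\ for every $p$, whereas $\sup(\mathrm{supp}(\mu))=0$. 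Since your argument, if valid, would prove this false statement, the error is exactly the in-probability-to-a.s.\ step. The lemma is correct either with an almost-sure weak convergence hypothesis (in which case your proof goes through verbatim, since then $F_n(y)\to F(y)$ holds a.s.\ simultaneously for all $y$ in a countable set of continuity points) or with the in-probability hypothesis and an in-probability conclusion; either reading still delivers the deterministic consequence $\sup(\mathrm{supp}(\nu))=\infty$ that the paper actually extracts from it.
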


Since there exists an open set $U\subseteq [0,1]^{2}$ such that $W>0$ on $U$, then using Lemma \ref{Fact: A5} and Lemma \ref{thm:ESD_YN} we have
\begin{align}\label{eq:bound_esd_YN}
    \lim_{p\rightarrow0}\limsup_{N\rightarrow\infty}\lambda_{[Np]}(Y_{N})=\sup(\text{supp}(\zeta))=\infty \text{ almost surely}
\end{align}
Since in the following we would be dealing with almost sure statements, then we refrain from writing almost surely every time. Here $[x]$ denotes the smallest integer larger than or equal to $x$ and $\lambda_{k}(\Sigma)$ denotes the $k^{th}$ largest eigenvalue of the matrix $\Sigma$. By Weyl's inequality we have
\begin{align*}
    \lambda_{2[Np]-1}(Y_{N})\leq \lambda_{[Np]}(\Bar{A}_{N}+Y_{N})+\lambda_{[Np]}(-\Bar{A}_{N})
\end{align*}
Now by definition of $\Bar{A}_{N}$, observe that $-\Bar{A}_{N}$ satisfies the assumptions of Theorem 1 of \citet{zhu2020graphon}, and let
\begin{align*}
    \text{ESD}\left(-\Bar{A}_{N}\right)\rightarrow \kappa_{A}
\end{align*}
for some unique measure $\kappa_{A}$ on $\mathbb{R}$. Suppose $\widetilde{X}\sim\kappa_{A}$. Then we have from~\eqref{moments:zhu}
\begin{align*}
    \mathbb{E}\left[\widetilde{X}^{k}\right]=
    \begin{cases}
        0 & k \text{ is odd}\\
        \sum_{j=1}^{C_{k/2}}t\left(T_{j}^{k/2+1},W\right) & k\text{ is even}
    \end{cases}
\end{align*}
where $T_{j}^{k+1}$ denotes the $j^{th}$ rooted planar tree of $k+1$ vertices and $C_{k}$  is the $k^{th}$ Catalan number. Then observe that
\begin{align*}
    \|\widetilde{X}\|_{2k}=\left(\sum_{j=1}^{C_{k}}t(T_{j}^{k+1},W)\right)^{\frac{1}{2k}}\leq \left(C_{k}C^{k}\right)^{\frac{1}{2k}}
\end{align*}
where $\|\cdot\|_{2k}$ denotes the $L^{2k}$ norm and $C$ is from \ref{itm:A1}. It follows that there exists a constant $C^{\circ}_{1}\in(0, \,\infty)$ such that 
\begin{align*}
    \lim_{k\rightarrow\infty}\left(C_{k}C^{k}\right)^{\frac{1}{2k}}= C^{\frac{1}{2}}C^{\circ}_{1}.
\end{align*}
Hence we can conclude that there exists a constant  $C^{\circ}>0$ such that
\begin{align*}
    \left\|\widetilde{X}\right\|_{2k}\leq C^{\circ},\quad\forall k\geq 1
\end{align*}
Now since $\left\|\widetilde{X}\right\|_{k}\leq\left\|\widetilde{X}\right\|_{2k}$ we have
\begin{align*}
    \left\|\widetilde{X}\right\|_{k}\leq C^{\circ},\quad\forall k\geq 1
\end{align*}
Now define $f=\min\left(|\widetilde{X}|,C^{\circ}+1\right)\in L^{\infty}$. Then $\|f\|_{p}\leq \|\widetilde{X}\|_{p}\leq C^{\circ}$. Now since $f\in L^{\infty}\cap L^{p}$ for some $p\geq 1$, then $\|f\|_{\infty}=\lim_{p\rightarrow\infty}\|f\|_{p}\leq C^{\circ}$. Then we have $\min\left(|\widetilde{X}|,C^{\circ}+1\right)\leq C^{\circ} \text{ a.e. }$ which implies $$ |\widetilde{X}|\leq C^{\circ} \text{ a.e.}$$ Thus 
\begin{align}\label{eq:compact_sup}
    \sup(\text{supp}(\kappa_{A}))\leq C^{\circ}.
\end{align}
Hence using Lemma \ref{Fact: A5} we have
\begin{align*}
    \lim_{p\rightarrow0}\limsup_{N\rightarrow\infty}\lambda_{[Np]}(-\Bar{A}_{N})=\sup(\text{supp}(\kappa_{A}))\leq C^{\circ}
\end{align*}
Fix $\varepsilon>0$, then there exists $\eta>0$ such that for all $p\in (0,\eta\wedge\frac{1}{2})$, $\limsup_{N\rightarrow\infty}\lambda_{[Np]}(-\Bar{A}_{N})\leq C^{\circ}+\varepsilon$. So for all $p\in (0,\eta\wedge\frac{1}{2})$ we have
\begin{align*}
    \limsup_{N\rightarrow\infty}\lambda_{[Np]}(\Bar{A}_{N}+Y_{N})
    &\geq \limsup_{N\rightarrow\infty}\lambda_{2[Np]-1}(Y_{N})-\liminf_{N\rightarrow\infty}\lambda_{[Np]}(-\Bar{A}_{N})\\
    &\geq \limsup_{N\rightarrow\infty}\lambda_{2[Np]-1}(Y_{N})-(C^{\circ}+\varepsilon)
\end{align*}
Now from \eqref{eq:bound_esd_YN} it can be easily shown that
$    \lim_{p\rightarrow 0}\limsup_{N\rightarrow\infty}\lambda_{2[Np]-1}(Y_{N})=\infty
$
and hence we have 
\begin{align*}
    \sup(\text{supp}(\nu))=\lim_{p\rightarrow 0}\limsup_{N\rightarrow\infty}\lambda_{[Np]}(\Bar{A}_{N}+Y_{N})=\infty
\end{align*}
which implies that $\nu$ has unbounded support.

\subsubsection{\textbf{Identification of moments of $\nu$}}
The proof of the previous result already gave the convergence of the moments. We briefly browse through the expressions to write them in terms of the description used in Section~\ref{sec:moment_description}.
First, remember that the odd moments are 0. The case for $k=0$ is trivial, so fix $k$ in $2\mathbb{N}$. Then it is enough to look at the convergence of the terms (recall \eqref{big:expression})
\begin{align}\label{eq:full_sum}
    \mathbb{E}\left[\frac{1}{N}\Tr\left(\Bar{A}_{N}^{m_{1}}Y_{N}^{n_{1}}\hdots\Bar{A}_{N}^{m_{k}}Y_{N}^{n_{k}}\right)\right]
    &=\frac{1}{N}\sum_{i_{1}\hdots i_{\sum m_{p}}}\mathbb{E}\left[\prod_{j=1}^{\sum m_{p}}G_{i_{j}\wedge i_{j+1},i_{j}\vee i_{j+1}}\right]\prod_{j=1}^{\sum m_{p}}\frac{\sigma_{i_{j},i_{j+1}}}{\sqrt{N}}\nonumber\\
    &\prod_{j=1}^{k}\left(\frac{1}{N}\sum_{t\neq i_{1+\sum_{p=1}^{j}m_{p}}}\sigma_{i_{1+\sum_{p=1}^{j}m_{p}},t}^{2}\right)^{\frac{n_{j}}{2}}\mathbb{E}\left[\prod_{j=1}^{k}Z_{i_{1+\sum_{p=1}^{j}m_{p}}}^{n_{j}}\right].
\end{align}
where $\sum_{p=1}^{k}m_{p}+n_{p}=k$. Now if $\sum_{p=1}^{k}m_{p}$ is odd then
\begin{align*}
    \mathbb{E}\left[\prod_{j=1}^{\sum m_{p}}G_{i_{j}\wedge i_{j+1},i_{j}\vee i_{j+1}}\right]=0
\end{align*}
and the corresponding moment becomes 0. So we need only consider $\sum_{p=1}^{k}m_{p}$ is even. Once again we look at the closed walk $i_{1}\rightarrow\cdots i_{m}\rightarrow i_{1}$. (Remember that $i_{m+1}=i_{1}$ and hence we identify 1 by $m+1$ for notational convenience.) Arguing as in the convergence of moments part we can show that it is enough to look at a closed walk on a tree of $m/2+1$ vertices where each edge is visited twice. We know that there exists one correspondence between such a walk and a depth first search over a rooted planar tree having vertices chosen from $[N]$. Hence consider the $r^{th}$ labelled rooted planar tree $T_{r,l}^{m/2+1}=(V,E)$ with the labelling $l=(l_{1},\cdots,l_{m/2+1})$ which corresponds to this walk. Then it is easy to see that
\begin{align*}
    \mathbb{E}\left[\prod_{j=1}^{\sum m_{p}}G_{i_{j}\wedge i_{j+1},i_{j}\vee i_{j+1}}\right]=\mathbb{E}\left[\prod_{e\in E}G_{e}^{2}\right]=1
\end{align*}
and using the notations defined in the setup of Section~\ref{sec:moment_description} we have
\begin{align*}
    \prod_{j=1}^{k}\left(\frac{1}{N}\sum_{t\neq i_{1+\sum_{p=1}^{j}m_{p}}}\sigma_{i_{1+\sum_{p=1}^{j}m_{p}},t}^{2}\right)^{\frac{n_{j}}{2}}=\prod_{s=1}^{\frac{m}{2}+1}\left(\frac{1}{N}\sum_{t\neq l_{s}}\sigma_{l_{s},t}^{2}\right)^{\sum_{j=1}^{\eta_{s}}n_{s_{j}}/2}
\end{align*}
and
\begin{align*}
    \mathbb{E}\left[\prod_{j=1}^{k}Z_{i_{1+\sum_{p=1}^{j}m_{p}}}^{n_{j}}\right]=\mathbb{E}\left[\prod_{s=1}^{\frac{m}{2}}Z_{l_{s}}^{\sum_{j=1}^{\eta_{s}}n_{s_{j}}}\right]=\prod_{s=1}^{\frac{m}{2}}\mathbb{E}\left[Z_{l_{s}}^{\sum_{j=1}^{\eta_{s}}n_{s_{j}}}\right]=f\left(\widetilde{T}_{r,l}^{m/2+1}\right)
\end{align*}
where $f$ is defined in ~\eqref{def:f}. Hence the contribution of this term is 0 if $\sum_{j=1}^{\eta_{s}}n_{s_{j}}$ is odd for some $s\in \{1,2,\cdots, m/2+1\}$. So we consider the situation where $\sum_{j=1}^{\eta_{s}}n_{s_{j}}$ is even for all $s\in \{1,2,\cdots, m/2+1\}$. Then \eqref{eq:full_sum} becomes
\begin{align}\label{eq:sum_in_tree}
    \frac{1}{N^{m/2+1}}\sum_{r=1}^{C_{m/2}}\sum_{l_{1}\neq l_{2}\neq\cdots\neq l_{m/2+1}}\prod_{e\in E\left(T_{r,l}^{m/2+1}\right)}\sigma_{e}^{2}\prod_{s=1}^{\frac{m}{2}+1}\left(\frac{1}{N}\sum_{t\neq l_{s}}\sigma_{l_{s},t}^{2}\right)^{\sum_{j=1}^{\eta_{s}}n_{s_{j}}/2}f\left(\widetilde{T}_{r,l}^{m/2+1}\right)
\end{align}
Observing that $f$ does not depend on the labelling of the tree, and going similarly as in \eqref{eq:graphon_convg_tree_sigma} we can show that 
\begin{align*}
    \frac{1}{N^{m/2+1}}\sum_{l_{1}\neq l_{2}\neq\cdots\neq l_{m/2+1}}\prod_{e\in E\left(T_{r,l}^{m/2+1}\right)}\sigma_{e}^{2}\prod_{s=1}^{\frac{m}{2}+1}
    &\left(\frac{1}{N}\sum_{t\neq l_{s}}\sigma_{l_{s},t}^{2}\right)^{\sum_{j=1}^{\eta_{s}}n_{s_{j}}/2}f\left(\widetilde{T}_{r,l}^{m/2+1}\right)\nonumber\\
    &\rightarrow t(\widetilde{T}_{r}^{m/2+1},W)f(\widetilde{T}_{r}^{m/2+1})
\end{align*}
Then combining with \eqref{eq:sum_in_tree} and \eqref{eq:trace_expansion} we are done. \qed

\begin{proof}[Proof of Corollary \ref{corollary: dingjiang}]

It is easy to observe that $\lambda_{i}(A+\alpha I_{N})=\lambda_{i}(A)+\alpha$ and $\lambda_{i}\left(\frac{A}{\alpha}\right)=\frac{1}{\alpha}\lambda_{i}(A)$ for any $N\times N$ matrix $A$ and for any $\alpha\neq 0$. Then observe that 
\begin{align*}
    \frac{\lambda_{i}(\Delta_{N})-N\mu_{N}}{\sqrt{N}\sigma_{N}}=\lambda_{i}\left(\frac{\Delta_{N}-N\mu_{N}I_{N}}{\sqrt{N}\sigma_{N}}\right)
\end{align*}
Now observe that the centered laplacian is 
\begin{align*}
    \Delta_{N}^{0}=\frac{\Delta_{N}-N\mu_{N}I_{N}+\mu_{N}J_{N}}{\sqrt{N}\sigma_{N}}
\end{align*}
where $J_{N}$ is the $N\times N$ matrix having all entries equal to 1. Define $\Delta_{N}^{1}=\frac{\Delta_{N}-N\mu_{N}I_{N}}{\sqrt{N}\sigma_{N}}$. Now by rank inequality (\citet[Theorem A.43]{bai2010spectral}) we have
\begin{align}
    \left\|F^{\Delta_{N}^{0}}-F^{\Delta_{N}^{1}}\right\|\leq \frac{1}{N}\text{rank}\left(\Delta_{N}^{0}-\Delta_{N}^{1}\right)=\frac{1}{N}\text{rank}\left(\frac{\mu_{N}}{\sqrt{N}\sigma_{N}}J_{N}\right)=O\left(\frac{1}{N}\right)\rightarrow0
\end{align}
where $F^{A_{N}}(x)=\frac{1}{N}\sum_{i=1}^{N}I\left\{\lambda_{i}(A_{N})\leq x\right\}$ for any $N\times N$ symmetric matrix $A_{N}$ and its eigenvalues $\{\lambda_{i}(A_{N}):1\leq i\leq N\}$. Hence it is enough to look at convergence of ESD of $\Delta_{N}^{0}$.\\
Now define $B_{N}=\frac{1}{\sigma_{N}}\left(A_{N}-\mathbb{E}A_{N}\right)=\frac{1}{\sigma_{N}}\left(A_{N}-\mu_{N}J_{N}\right)$. Then it is easy to see that $\sqrt{N}\Delta_{N}^{0}$ is the laplacian corresponding to $B_{N}$. Now observe that for any $\eta>0$
\begin{align}
    \frac{1}{N^{2}}\sum_{1\leq i,j\leq N}\mathbb{E}\left[\left|B_{N}(i,j)\right|^{2}\one\left\{\left|B_{N}(i,j)\right|\geq \eta\sqrt{N}\right\}\right]
    &\leq \frac{1}{N^{2}}\sum_{1\leq i,j\leq N}\frac{1}{\eta^{\delta}N^{\frac{\delta}{2}}}\mathbb{E}\left[\left|\frac{A_{N}(i,j)-\mu_{n}}{\sigma_{N}}\right|^{2+\delta}\right]\\
    &=O\left(N^{-\frac{\delta}{2}}\right)\rightarrow 0
\end{align}
Thus $B_{N}$ satisfies assumption \ref{itm:A2}. Assumption \ref{itm:A1} is immediate from the given assumptions in the theorem. Observe that
\begin{align}
    \mathbb{E}\left[B_{N}(i,j)^{2}\right]=1,\ \forall 1\leq i,j\leq N
\end{align}
Taking $W\equiv 1$, a graphon, we can easily see that assumption \ref{itm:A3} is satisfied. Hence by Theorem \ref{thm:laplacian} there exists a symmetric probability measure $\nu$ of unbounded support where ESD of $\Delta_{N}^{0}$ converges to weakly in probability.\\
Recall from the proof of Theorem \ref{thm:laplacian}, the moments which identify $\nu$ are given by the limits of $\frac{1}{N}\Tr\left(\Bar{A}_{N}+Y_{N}\right)^{k}$ for all $k\geq 1$ where $\Bar{A}_{N}$ is as defined in the Lemma \ref{lemma:DeltaNbar} with $\sigma_{i,j}=1\ \forall i,j$ and $Y_{N}$ is the diagonal $N\times N$ matrix as defined in Lemma \ref{lemma: AN+YN} with $\sigma_{i,j}=1$. It is easy to observe that $\frac{1}{N}\Tr\left(\Bar{A}_{N}+Y_{N}\right)^{k}$ is the $k^{th}$ moment of ESD$\left(\Bar{A}_{N}+Y_{N}\right)$. Since $\nu$ is uniquely identified by it's moments, then using method of moments, we can say that ESD$\left(\Bar{A}_{N}+Y_{N}\right)\Longrightarrow\nu$ in probability. By the strong law of large numbers, with probability 1, ESD$(Y_{N})\Longrightarrow\gamma_{1}$, where $\gamma_{1}$ denotes the standard normal  distribution. Also we know that,with probability 1, ESD$(\Bar{A}_{N})\Longrightarrow\gamma_{0}$, where $\gamma_{0}$ denotes the semicircle law. Further 
\begin{align*}
    \sup_{N}\mathbb{E}\int |x|d\text{ESD}(Y_{N})=\sup_{N}\mathbb{E}\frac{1}{N}\sum_{i=1}^{N}|Z_{i}|<\infty
\end{align*}
where $Z_{i}$ are i.i.d. standard normal. Also $\mathbb{E}\int|x|d\text{ESD}(\Bar{A}_{N})\leq \frac{1}{N}\sqrt{\mathbb{E}\Tr(\Bar{A}_{N}^{2})}=1$. Then by Theorem 2.1 of \citet{pastur2000law} we have ESD$\left(\Bar{A}_{N}+Y_{N}\right)$ converges weakly in probability to $\gamma_{0}\boxplus\gamma_{1}=\gamma_{M}$. Hence by uniqueness of $\nu$ we must have $\nu=\gamma_{M}$.
\end{proof}

\begin{proof}[Proof of Theorem \ref{thm:multi_structure}]

The case when $r\equiv 0$ is trivially true. Hence we assume that $r>0$ at some point in [0,1]. By the proof of Theorem \ref{thm:laplacian}
\begin{align*}
    \text{ESD}\left(\Bar{A}_{N}+Y_{N}\right)\rightarrow\nu,\text{ weakly in probability}
\end{align*}
where $\Bar{A}_{N}$ and $Y_{N}$ are as defined in Lemma \ref{lemma: AN+YN}. Define 
\begin{align}\label{eq:eta_def}
    \eta^{W}\left(\frac{i}{N},\frac{j}{N}\right)=N^{2}\int_{I_{i}\times I_{j}}W(x,y)dxdy,\quad \forall 1\leq i,j\leq N,
\end{align}
where $\{I_i\}$ is the partition of $[0,1]$ as defined in Definition~\ref{def:empiricalgraphon}. Then define the $N\times N$ matrix $\Bar{Z}_{N}$ as
\begin{align*}
    \Bar{Z}_{N}(i,j)=\sqrt{\frac{\eta^{W}\left(\frac{i}{N},\frac{j}{N}\right)}{N}}G_{i\wedge j,i\vee j},\quad 1\leq i,j\leq N
\end{align*}
where the collection $\{G_{i,j}: \,\, 1\leq i,j\leq N\}$ is defined in \eqref{eq:ANgandDelNg}. Further define the $N\times N$ diagonal matrix $Y_{N}^{Z}$ by
\begin{align*}
    Y_{N}^{Z}(i,i)=\sqrt{\frac{1}{N}\sum_{j\neq i}\eta^{W}\left(\frac{i}{N},\frac{j}{N}\right)}Z_{i},\quad 1\leq i\leq N
\end{align*}
where $Z_{i}$ for all $1\leq i\leq N$ are as defined in Lemma \ref{lemma: AN+YN}. Observe that
\begin{align}\label{eq:AN_ZN}
    \frac{1}{N}\mathbb{E}\Tr\left(\Bar{A}_{N}-\Bar{Z}_{N}\right)^{2}
    &=\frac{1}{N^{2}}\sum_{i,j=1}^{N}\left(\sigma_{i,j}-\sqrt{\eta^{W}\left(\frac{i}{N},\frac{j}{N}\right)}\right)^2\nonumber\\
    &=\frac{1}{N^{2}}\sum_{i,j=1}^{N}\eta^{W}\left(\frac{i}{N},\frac{j}{N}\right)+\frac{1}{N^{2}}\sum_{i,j=1}^{N}\sigma_{i,j}^{2}-\frac{2}{N^{2}}\sum_{i,j=1}^{N}\sigma_{i,j}\sqrt{\eta^{W}\left(\frac{i}{N},\frac{j}{N}\right)}
\end{align}
Using \eqref{eq:eta_def} and the fact that $|\sqrt{x}-\sqrt{y}|\leq|x-y|/\sqrt{y}$ for $x, y\in (0,\infty)$, we have
\begin{align*}
    \Bigg|\frac{1}{N^{2}}\sum_{i,j=1}^{N}\sigma_{i,j}
    &\sqrt{\eta^{W}\left(\frac{i}{N},\frac{j}{N}\right)}-\int_{[0,1]^{2}}W(x,y)dxdy\Bigg|\\
    &=\left|\sum_{i,j=1}^{N}\left(\int_{I_{i}\times I_{j}}W(x,y)dxdy\right)^{1/2}\left[\left(\int_{I_{i}\times I_{j}}W_{N}(x,y)dxdy\right)^{1/2}-\left(\int_{I_{i}\times I_{j}}W(x,y) dxdy\right)^{1/2}\right]\right|\\
    &\leq \sum_{i,j=1}^{N}\int_{I_{i}\times I_{j}}\left|W_{N}(x,y)-W(x,y)\right|dxdy\overset{N\rightarrow\infty}{\longrightarrow}0.
\end{align*}
From \eqref{eq:AN_ZN} we conclude that
\begin{align}\label{eq:AN_ZN_convg}
    \frac{1}{N}\mathbb{E}\Tr\left(\Bar{A}_{N}-\Bar{Z}_{N}\right)^{2}\rightarrow 0.
\end{align}
A similar computation as above shows that 
\begin{align}\label{eq:YN_YNZ_convg}
    \frac{1}{N}\mathbb{E}\Tr\left(Y_{N}-Y_{N}^{Z}\right)^{2}\rightarrow 0.
\end{align}
Combining \eqref{eq:AN_ZN_convg} and \eqref{eq:YN_YNZ_convg}, along with \citet[Corollary A.41]{bai2010spectral} we have
\begin{align*}
    L\left(\text{ESD}\left(\Bar{A}_{N}+Y_{N}\right),\text{ESD}\left(\Bar{Z}_{N}+Y_{N}^{Z}\right)\right)\overset{P}{\longrightarrow}0.
\end{align*}
Define 
\begin{align}\label{eq:def_gi}
    g_{i}=\int_{I_{i}}r(x)dx,\quad\forall 1\leq i\leq N
\end{align}
and define the $N\times N$ diagonal matrix $\widetilde{Y}_{N}^{Z}$ as 
\begin{align*}
    \widetilde{Y}_{N}^{Z}(i,i)=\alpha\sqrt{Ng_{i}}Z_{i},\quad \forall 1\leq i\leq N.
\end{align*}
By definition in \eqref{eq:def_gi}
\begin{align*}
    \frac{1}{N}\sum_{j\neq i}\eta^{W}\left(\frac{i}{N},\frac{j}{N}\right)=Ng_{i}(\alpha^{2}-g_{i}),\quad\forall 1\leq i\leq N.
\end{align*}
Thus
\begin{align}
    \frac{1}{N}\mathbb{E}\Tr\left(\left(\Bar{Z}_{N}+Y_{N}^{Z}\right)-\left(\Bar{Z}_{N}+\widetilde{Y}_{N}^{Z}\right)\right)^2
    &=\frac{1}{N}\mathbb{E}\Tr\left(Y_{N}^{Z}-\widetilde{Y}_{N}^{Z}\right)^{2}\nonumber\\
    &=\frac{1}{N}\sum_{i=1}^{N}\left(\sqrt{Ng_{i}(\alpha^{2}-g_{i})}-\alpha\sqrt{Ng_{i}}\right)^{2}\mathbb{E}Z_{i}^{2}\nonumber\\
    &=\sum_{i=1}^{N}g_{i}\left(\alpha-\sqrt{\alpha^{2}-g_{i}}\right)^{2}\nonumber\\
    &\leq \sum_{i=1}^{n}g_{i}\frac{1}{\alpha}\left(\alpha^{2}-\left(\alpha^{2}-g_{i}\right)^{2}\right)\label{eq:g_i_ineq}\\
    &\leq \sum_{i=1}^{N} \frac{1}{\alpha}g_{i}^{3}\rightarrow 0.\nonumber
\end{align}
where \eqref{eq:g_i_ineq} follows from the inequality $|\sqrt{x}-\sqrt{y}|\leq \frac{|x-y|}{\sqrt{y}}$ and the final limit follows since $g_{i}\leq 1/N$ for all $1\leq i\leq N$. Now for $N\geq 1$, define the $N\times N$ matrices
\[
    G_{N}(i,j)=N^{-1/2}G_{i\wedge j,i\vee j}, \quad 1\leq i,j\leq N,
\]
\[
    R_{N}=\text{Diag}\left(\sqrt{Ng_{1}},\sqrt{Ng_{2}},\cdots,\sqrt{Ng_{N}}\right)
\]
and
\[
    U_{N}=\text{Diag}\left(Z_{1},\cdots,Z_{N}\right)
\]
Then observe that
\[
    \Bar{Z}_{N}=R_{N}G_{N}R_{N},
\]
and
\[
    \widetilde{Y}_{N}^{Z}=\alpha R_{N}^{1/2}U_{N}R_{N}^{1/2}
\]
The proof would follow similarly as the proof of Theorem 1.3 from \citet{chakrabarty2018spectra} if we can show that for any $K\geq 0$ and all $k\geq 1$, $m_{1},\cdots, m_{k}$ and $n_{1},\cdots, n_{k}\geq 0$,
\begin{align}\label{eq:lim_poly}
    &\lim_{N\rightarrow\infty}\frac{1}{N}\Tr\left(R_{N}^{m_{1}}U_{NK}^{n_{1}}\cdots R_{N}^{m_{k}}U_{NK}^{n_{k}}\right)\\
    &=\int_{0}^{1}r^{\sum_{i=1}^{k}m_{i}/4}(u)du\int_{-K}^{K}\frac{1}{\sqrt{2\pi}}x^{\sum_{i=1}^{k}n_{i}}e^{-x^{2}/2}dx\text{   a.s. }
\end{align}
where for all $K\geq 0$, the $N\times N$ diagonal matrix $U_{NK}$ is given by
\[
    U_{NK}=\text{Diag}\left(Z_{1}\one_{\{|Z_{1}|\leq K\}},\cdots,Z_{N}\one_{\{|Z_{N}|\leq K\}}\right)
\]
An application of SLLN shows that it is enough to look at the limit of 
\begin{align*}
    \frac{1}{N}\sum_{i=1}^{N}\left(Ng_{i}\right)^{\sum_{j=1}^{k}m_{j}/4}
    &\mathbb{E}\left(Z_{i}^{\sum_{j=1}^{k}n_{j}}\one_{\{|Z_{i}|\leq K\}}\right)\\
    &=\frac{1}{N}\sum_{i=1}^{N}\left(Ng_{i}\right)^{\sum_{j=1}^{k}m_{j}/4}\int_{-K}^{K}\frac{1}{\sqrt{2\pi}}x^{\sum_{i=1}^{k}n_{i}}e^{-x^{2}/2}dx
\end{align*}
Define 
\begin{align*}
    m(x)=\int_{0}^{x}r(t)dt,\quad\forall x\in[0,1]
\end{align*}
Then observe that 
\begin{align*}
    \frac{1}{N}\sum_{i=1}^{N}\left(N\int_{I_{i}}r(x)dx\right)^{\sum_{j=1}^{k}m_{j}/4}=\frac{1}{N}\sum_{i=1}^{N}\left(\frac{m(i/N)-m(i-1/N)}{1/N}\right)^{\sum_{j=1}^{k}m_{j}/4}
\end{align*}
It is easy to see that $m$ is uniformly differentiable on $[0,1]$.\footnote{Let $f$ be defined (and real valued) on $[a,b]$ and the derivative $f^{\prime}$ exists on $[a,b]$ (considering the left and right derivatives at the boundary points). Then the function $f$ is said to be uniformly differentiable if for all $\vep>0$, there exists $\delta>0$ such that whenever $0\leq |t-x|\leq \delta$, $a\leq x,t\leq b$
\begin{align*}
    \left|\frac{f(x)-f(t)}{x-t}-f^{\prime}(x)\right|<\varepsilon
\end{align*}

It follows from \citet[Exercise 5.8]{rudin1964principles} that a function $f$ differentiable on $[a,b]$ is uniformly differentiable if $f^{\prime}$ is continuous on $[a,b]$. }
Hence given $\vep>0$, for large enough $N$ we have
\begin{align*}
    \left(\frac{m(i/N)-m(i-1/N)}{1/N}\right)^{\sum_{j=1}^{k}m_{j}/4}=r\left(\frac{i}{N}\right)^{\sum_{j=1}^{k}m_{j}/4}+O(\vep)
\end{align*}
Hence taking $N\rightarrow\infty$ we have
\begin{align*}
    \frac{1}{N}\sum_{i=1}^{N}\left(\frac{m(i/N)-m(i-1/N)}{1/N}\right)^{\sum_{j=1}^{k}m_{j}/4}\rightarrow\int_{0}^{1}r^{\sum_{i=1}^{k}m_{i}/4}(u)du+O(\vep)
\end{align*}
Since the above is true for all $\vep>0$, then we have shown \eqref{eq:lim_poly}.
\end{proof}

\section{Proof of Theorem \ref{thm:spectral_norm_bound} and Corollary \ref{thm:spectral_norm_mean}} \label{sec:proofSpectralnorm}
Before proving that spectral norm of $\Delta_{N}$ scales as $\sqrt{N\log N}$, we show that the spectral norm of the adjacency matrix would scale slower than $\sqrt{N\log N}$, which is crucially needed for proving the theorem.
\begin{lemma}\label{lemma: AnNorm}
For a $N\times N$ generalised Wigner matrix $A_{N}$ satisfying the above stated assumptions \ref{itm:A1_Norm}-\ref{itm:A3_Norm},
$$
    \frac{\|A_{N}\|}{\sqrt{N\log N}}\overset{a.s.}{\longrightarrow}0
$$
where $\|\cdot\|$ is the spectral norm.
\end{lemma}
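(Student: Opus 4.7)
The plan is to prove $\|A_N\|/\sqrt{N\log N}\to 0$ almost surely via the trace--moment method with the moment exponent growing like $\log N$, combined with an entrywise truncation that exploits the $(6+\delta)$-th moment bound \ref{itm:A2_Norm}. Note that the graphon assumption \ref{itm:A3_Norm} plays no role in this lemma; only \ref{itm:A1_Norm} and \ref{itm:A2_Norm} are needed, along with the implicit mean-zero hypothesis inherited from the setting of Theorem \ref{thm:spectral_norm_bound}.

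First I would truncate. Fix $\alpha\in\bigl(3/(6+\delta),\,1/2\bigr)$, a nonempty interval because $\delta>0$, and set $\tau_N=N^\alpha$. Write $\widehat X_{ij}:=X_{ij}\one_{\{|X_{ij}|\le\tau_N\}}$, $\widetilde X_{ij}:=\widehat X_{ij}-\E\widehat X_{ij}$, and denote by $\widehat A_N,\widetilde A_N$ the corresponding symmetric matrices. From \ref{itm:A2_Norm} and Markov,
$$\sum_{N}P\bigl(\exists\,i\le j:\,|X_{ij}|>\tau_N\bigr)\;\le\;K\sum_N N^{2-\alpha(6+\delta)}<\infty,$$
so Borel--Cantelli gives $A_N=\widehat A_N$ eventually almost surely. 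Moreover $|\E\widehat X_{ij}-\E X_{ij}|\le K\tau_N^{-(5+\delta)}$, whence $\|\widehat A_N-\widetilde A_N\|\le NK\tau_N^{-(5+\delta)}=o(\sqrt{N\log N})$, and matters reduce to bounding $\|\widetilde A_N\|$.

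Next I would run the trace--moment method on $\widetilde A_N$, whose entries are bounded by $2\tau_N$ and have variance at most $A_2$. For an integer $k\ge 1$, $\|\widetilde A_N\|^{2k}\le\Tr\bigl(\widetilde A_N^{2k}\bigr)$, and expanding the trace as a sum over closed walks of length $2k$ on $[N]$ reduces to the standard Wigner-type estimate. Grouping walks by their step-sequence: tree walks (each edge traversed exactly twice) contribute at most $C_k\,N^{k+1}A_2^k$ by \ref{itm:A1_Norm}; every additional edge multiplicity $\ell\ge 3$ costs a factor $\E|\widetilde X_{ij}|^\ell\le(2\tau_N)^{\ell-2}A_2$, paid for by the loss of a vertex (i.e., a factor $1/N$), which gives a net saving of $\tau_N/\sqrt N=N^{\alpha-1/2}=o(1)$ per excess. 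A F\"uredi--Koml\'os--Bai--Yin style bookkeeping then yields
$$\E\Tr\bigl(\widetilde A_N^{2k}\bigr)\le C_k\,N^{k+1}A_2^k\,\bigl(1+o(1)\bigr)\quad\text{uniformly in }k=O(\log N).$$

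Taking $k_N:=\lfloor c\log N\rfloor$ and using $C_{k}\le 4^{k}$, Markov gives
$$P\bigl(\|\widetilde A_N\|>\vep\sqrt{N\log N}\bigr)\;\le\;\frac{C_{k_N}N^{k_N+1}A_2^{k_N}(1+o(1))}{(\vep^2 N\log N)^{k_N}}\;\le\;2N\left(\frac{4A_2}{\vep^2\log N}\right)^{\!k_N},$$
which is summable in $N$ once $c$ is large enough; a final Borel--Cantelli completes the proof. The main obstacle is the uniform-in-$k$ control of the non-tree contributions in the trace expansion: the number of walk equivalence classes grows factorially in $k$, and the small gain $N^{\alpha-1/2}$ per excess edge-multiplicity must be shown to dominate this combinatorial explosion simultaneously with $k=k_N\to\infty$. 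The restriction $k_N=O(\log N)$ together with the choice $\alpha<1/2$ is precisely calibrated to keep the combinatorial and analytic pieces balanced; the uniform upper bound $A_2\le 1$ from \ref{itm:A1_Norm} decouples the argument from the variance profile, effectively reducing it to the classical bounded-variance case up to the constant $A_2$.
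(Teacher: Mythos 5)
Your proposal is correct in outline, but it reaches the conclusion by a genuinely different route than the paper. The paper does not work at the $\sqrt{N\log N}$ scale at all: it first invokes \citet[Theorem 3.2]{zhu2020graphon} (this is where \ref{itm:A3_Norm} enters) to get a compactly supported limiting ESD, uses that to control $\liminf\lambda_{\max}(A_N)/\sqrt N$ via \citet[Lemma 2.8]{ding2010spectral}, and then cites \citet[Lemma 2.1]{ding2010spectral} for the Bai--Yin/F\"uredi--Koml\'os upper bound $\limsup_{N}\|A_N\|/\sqrt N\le 2A_2^{1/2}$ a.s., from which the lemma is immediate since $\sqrt N=o(\sqrt{N\log N})$. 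You instead prove the weaker bound $\|A_N\|=o(\sqrt{N\log N})$ directly, by truncating at $N^{\alpha}$ with $\alpha\in(3/(6+\delta),1/2)$ and running the trace method with $k\asymp\log N$; your truncation and recentering estimates are correct (the exponent arithmetic checks out), and the final Markov/Borel--Cantelli step is sound. The two approaches share the same combinatorial engine -- the uniform-in-$k$ F\"uredi--Koml\'os moment bound -- which you assert with a correct description of the tree/non-tree bookkeeping and the paper outsources entirely to Ding--Jiang, so neither is more complete than the other on that point. What your route buys is worth recording: you correctly observe that \ref{itm:A3_Norm} is superfluous here (the paper uses it only to identify the limiting ESD, which feeds only the $\liminf$ statement, itself not needed for an upper bound on $\|A_N\|$), so your argument proves the lemma under \ref{itm:A1_Norm}--\ref{itm:A2_Norm} alone; in fact even $A_1>0$ is not used, only $A_2<\infty$. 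What it loses is sharpness: the paper's cited bound gives the stronger $\|A_N\|=O(\sqrt N)$ a.s. One small caution: the lemma as stated does not list $\E X_{ij}=0$, and both your truncation/recentering step and the paper's proof silently use it; you should state explicitly that the lemma is applied only in the centered setting of Theorem \ref{thm:spectral_norm_bound}.
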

\begin{proof}
Following Theorem 3.2 from \citet{zhu2020graphon} we know that 
\begin{align}
    \lim_{N\rightarrow\infty} \text{ESD}\big(A_{N}\big)=\mu \text{ weakly almost surely}
\end{align}
where $\mu$ is identified by the moments given in \eqref{moments:zhu} 
Then going similar as in proof of \eqref{eq:compact_sup} we can conclude that there exists $C_{0}>0$ such that
\begin{align*}
    \sup(\text{supp}(\mu))\leq C^{\circ}
\end{align*}
Then by \citet[Lemma 2.8]{ding2010spectral} there exists $\alpha\in\mathbb{R}$ such that $|\alpha|<\infty$ and  
\begin{align}\label{eq:liminfmax}
    \liminf_{N\rightarrow\infty}\frac{\lambda_{\max}(A_{N})}{\sqrt{N}}\geq\alpha\text{ almost surely}.
\end{align}
(Observe that due to above bound on $\sup(\text{supp}(\mu))$ we can conclude that $|\alpha|<\infty$).
Thus it is enough to prove the upper bound 
\begin{align*}
    \limsup_{N\rightarrow\infty}\frac{\lambda_{\max}(A_{N})}{\sqrt{N}}\leq 2C_{2}^{1/2} \text{almost surely}
\end{align*}
We omit the proof of the upper bound since it follows similar to the proof of Lemma 2.1 from \citet{ding2010spectral}.
\end{proof}

\begin{proof}[Proof of Theorem \ref{thm:spectral_norm_bound}]
The proof follows the line of argument in \citet[Theorem 1.5]{bryc2006spectral}. Before proceeding with the proof in order to make notations clearer we will use $A_{N}=\left(X_{ij}^{(N)}\right)_{1\le i\le j\le N}$ to indicate the dependence on $N$ and similarly $\sigma_{N}(i,j)$ instead of $\sigma_{i,j}$ in corresponding places. 
Define 
$$
    D_{N}=\text{diag}\left(\sum_{j=1}^{N}X_{ij}^{(N)}\right)_{i=1}^{N}
$$
Then using triangle inequality we have 
\begin{align}\label{eq: TrainagleNorm}
    \left|\|\Delta_{N}\|-\|D_{N}\|\right|\leq \|A_{N}\|
\end{align}
Hence by Lemma \ref{lemma: AnNorm} it is enough to look at $\frac{\|D_{N}\|}{\sqrt{N\log N}}$. Define
$$
    T_{N}=\max_{1\leq i\leq N}\left|\sum_{j=1}^{N}X_{ij}^{(N)}\right|\text{ which implies } \frac{T_{N}}{\sqrt{N\log N}}=\frac{\|D_{N}\|}{\sqrt{N\log N}}
$$
Fix $1\leq i\leq N$, then using Lemma 2.1 of \citet{bryc2006spectral} (a result on strong Gaussian approximation) there exists $\left\{Y_{ij}^{(N)}: 1\leq j\leq N\right\}$ where $Y_{ij}^{(N)}\sim \mathrm{N}\left(0,\sigma_{N}(i,j)^{2}\right)$ and are independent such that $\forall\alpha>0$,
\begin{align}
    \mathbb{P}\left(\left|\sum_{j=1}^{N}X_{ij}^{(N)}-\sum_{j=1}^{N}Y_{ij}^{(N)}\right|\geq\alpha\sqrt{N\log N}\right)
    &\leq \frac{C}{1+(\alpha\sqrt{N\log N})^{6}}\sum_{j=1}^{N}\mathbb{E}\left|X_{ij}^{(N)}\right|^{6}\nonumber\\
    &\leq \frac{C_{0}}{N^{2}(\log N)^{3}}
\end{align}
The last inequality follows from assumption \ref{itm:A2_Norm}. Then it can be concluded that $\forall\alpha>0$,
\begin{align}\label{eq: Sakhanenkolimsup}
    \max_{i=1}^{N}\mathbb{P}\left(\left|\sum_{j=1}^{N}X_{ij}^{(N)}-\sum_{j=1}^{N}Y_{ij}^{(N)}\right|\geq\alpha\sqrt{N\log N}\right)\leq \frac{C_{0}}{N^{2}(\log N)^{3}}
\end{align}
Now observe that
\begin{align}
    \left|\sum_{j=1}^{N}X_{ij}^{(N)}\right|\leq \left|\sum_{j=1}^{N}Y_{ij}^{(N)}\right|+\left|\sum_{j=1}^{N}X_{ij}^{(N)}-\sum_{j=1}^{N}Y_{ij}^{(N)}\right|
\end{align}
Hence we have
\begin{align}\label{eq:TNgreaterthan}
    \mathbb{P}\left(T_{N}\geq (\alpha+2\epsilon)\sqrt{A_{2}N\log N}\right)
    &\leq N\max_{i=1}^{N}\mathbb{P}\left(\left|\sum_{j=1}^{N}X_{ij}^{(N)}\right|\geq(\alpha+2\epsilon)\sqrt{A_{2}N\log N}\right)\nonumber\\
    &\leq N\max_{i=1}^{N}\mathbb{P}\left(\left|\sum_{j=1}^{N}Y_{ij}^{(N)}\right|\geq(\alpha+\epsilon)\sqrt{A_{2}N\log N}\right)\nonumber\\
    &+ N\max_{i=1}^{N}\mathbb{P}\left(\left|\sum_{j=1}^{N}X_{ij}^{(N)}-\sum_{j=1}^{N}Y_{ij}^{(N)}\right|\geq\epsilon\sqrt{A_{2}N\log N}\right)
\end{align}
We know that $\sum_{j=1}^{N}Y_{ij}^{(N)}\sim \mathrm{N}\left(0,\sum_{j=1}^{N}\sigma_{N}(i,j)^{2}\right)$. Hence $\sum_{j=1}^{N}Y_{ij}^{(N)}\overset{d}{=}\left(\sum_{j=1}^{N}\sigma_{N}(i,j)^{2}\right)^{1/2}Z$, where $Z\sim\mathrm{N}(0,1)$. Using Gaussian tail inequality (Mill's ratio) we have

\begin{align}
    \mathbb{P}\left(\left|\sum_{j=1}^{N}Y_{ij}^{(N)}\right|\geq(\alpha+\epsilon)\sqrt{A_{2}N\log N}\right)
    &=\mathbb{P}\left(|Z|\geq \frac{(\alpha+\epsilon)\sqrt{A_{2}N\log N}}{\left(\sum_{j=1}^{N}\sigma_{N}(i,j)^{2}\right)^{1/2}}\right)\nonumber\\
    &\leq \mathbb{P}\left(|Z|\geq \frac{(\alpha+\epsilon)\sqrt{A_{2}N\log N}}{(A_{2}N)^{1/2}}\right)\\
    &=\mathbb{P}\left(|Z|\geq (\alpha+\epsilon)\sqrt{\log N}\right)\nonumber\\
    &\leq \frac{2}{\sqrt{2\pi}(\alpha+\epsilon)\sqrt{\log N}}\exp\left(-\frac{(\alpha+\epsilon)^{2}\log N}{2}\right)\nonumber\\
    &\leq C_{1}N^{-\frac{(\alpha+\epsilon)^{2}}{2}}
\end{align}
for some constant $C_{1}>0$ and all $N$ sufficiently large. Then by \eqref{eq:TNgreaterthan} we have
\begin{align}
    \mathbb{P}\left(T_{N}\geq (\alpha+2\epsilon)\sqrt{A_{2}N\log N}\right)
    &\leq C_{1}N^{1-\frac{(\alpha+\epsilon)^{2}}{2}}+\frac{C_{0}}{N(\log N)^{3}}
\end{align}
Now taking $\alpha=2$ we find that R.H.S. of above equation is $O(N^{-1}(\log N)^{-3})$. Then 
\begin{align}
    \sum_{N\geq 1}\mathbb{P}\left(T_{N}\geq (\alpha+2\epsilon)\sqrt{A_{2}N\log N}\right)<\infty
\end{align}
Hence by Borel Cantelli Lemma we have for all $\epsilon>0$
$$
    \limsup_{N\rightarrow\infty}\frac{T_{N}}{\sqrt{A_{2}N\log N}}\leq 2+2\epsilon\quad a.s.
$$
Hence
\begin{align}\label{eq: limsupbdd}
    \limsup_{N\rightarrow\infty}\frac{T_{N}}{\sqrt{2N\log N}}\leq (2A_{2})^{1/2} \quad a.s.
\end{align}
Now define $k_{N}=[N/\log N]$. Further define
\begin{align}
    V_{N}=\max_{i=1}^{k_{N}}\left|\sum_{j=1}^{k_{N}}X_{ij}^{(N)}\right|
\end{align}
Then observe that 
\begin{align}\label{eq: TNVN}
    T_{N}\geq \max_{i=1}^{k_{N}}\left|\sum_{j=k_{N}+1}^{N}X_{ij}^{(N)}\right|-V_{N}
\end{align}
Now observe that by similar computations as \eqref{eq: limsupbdd} we have $\limsup_{N\rightarrow\infty}V_{N}/\sqrt{2k_{N}\log k_{N}}\leq C$ almost surely for some $C>0$. Since $\sqrt{N\log N}/\sqrt{k_{N}\log k_{N}}\rightarrow\infty$. Hence $\lim_{N\rightarrow\infty}V_{N}/\sqrt{N\log N}=0$ almost surely. Then enough to look at 
\begin{align}
    \liminf_{N\rightarrow\infty}\frac{1}{\sqrt{2N\log N}}\max_{i=1}^{k_{N}}\left|\sum_{j=k_{N}+1}^{N}X_{ij}^{(N)}\right|
\end{align}
Now then once again for using a Gaussian approximation lemma (Lemma 2.1 of \citet{bryc2006spectral}), for large enough $N$ we have
\begin{align}\label{eq: Sakhaliminf}
    \max_{i=1}^{k_{N}}\mathbb{P}\left(\left|\sum_{j=1}^{N}X_{ij}^{(N)}-\sum_{j=1}^{N}Y_{ij}^{(N)}\right|\geq\alpha\sqrt{N\log N}\right)\leq \frac{C_{0}}{N^{2}(\log N)^{3}}
\end{align}
for $\left\{Y_{ij}^{(N)}: 1\leq j\leq k_{N}\right\}$ such that the random variables are independent for fixed $1\leq  i\leq k_{N}$ and $Y_{ij}^{(N)}\sim \mathrm{N}\left(0,\sigma_{N}(i,j)^{2}\right)$. Now
\begin{align}
    \mathbb{P}\left(\max_{i=1}^{k_{N}}\left|\sum_{j=k_{N}+1}^{N}X_{ij}^{(N)}\right|\leq (\beta-\epsilon)\sqrt{A_{1}N\log N}\right)
    &\leq \prod_{i=1}^{k_{N}}\mathbb{P}\left(\left|\sum_{j=k_{N}+1}^{N}X_{ij}^{(N)}\right|\leq (\beta-\epsilon)\sqrt{A_{1}N\log N}\right)
\end{align}
Observe the above uses the independence because of $i<j$. Then for fixed $1\leq i\leq k_{N}$ we have
\begin{align}\label{eq: liminfexpr}
    \mathbb{P}\left(\left|\sum_{j=k_{N}+1}^{N}X_{ij}^{(N)}\right|\leq (\beta-2\epsilon)\sqrt{A_{1}N\log N}\right)
    &\leq \mathbb{P}\left(\left|\sum_{j=k_{N}+1}^{N}Y_{ij}^{(N)}\right|\leq (\beta-\epsilon)\sqrt{A_{1}N\log N}\right)\nonumber\\
    &+\mathbb{P}\left(\left|\sum_{j=k_{N}+1}^{N}X_{ij}^{(N)}-\sum_{j=k_{N}+1}^{N}Y_{ij}^{(N)}\right|\geq \epsilon\sqrt{A_{1}N\log N}\right)
\end{align}
Now once again observe that $\sum_{j=k_{N}+1}^{N}Y_{ij}^{(N)}\sim\mathrm{N}(0,\sum_{j=k_{N}+1}^{N}\sigma_{N}(i,j)^{2})$. Then
\begin{align}
    \mathbb{P}\left(\left|\sum_{j=k_{N}+1}^{N}Y_{ij}^{(N)}\right|\leq (\beta-\epsilon)\sqrt{A_{1}N\log N}\right)
    &=\mathbb{P}\left(|Z|\leq\frac{(\beta-\epsilon)\sqrt{A_{1}N\log N}}{\sqrt{\sum_{j=k_{N}+1}^{N}\sigma_{N}(i,j)^{2}}} \right)\nonumber\\
    &\geq 1-\mathbb{P}\left(|Z|>\frac{(\beta-\epsilon)\sqrt{A_{1}N\log N}}{\sqrt{(N-k_{N})A_{1}}} \right)
\end{align}
Now observe that taking $0<\epsilon<\beta$, we must have for $N$ sufficiently large $(\beta-\epsilon)\sqrt{\frac{N}{N-k_{N}}}\leq (\beta-(\epsilon/2))$. Thus by using Mill's ratio for sufficiently large $N$ we have 
\begin{align}
    \mathbb{P}\left(|Z|>\frac{(\beta-\epsilon)\sqrt{A_{1}N\log N}}{\sqrt{(N-k_{N})A_{1}}} \right)
    &\geq \mathbb{P}\left(|Z|>(\beta-\epsilon)\sqrt{\frac{N}{N-k_{N}}}\sqrt{\log N}\right)\nonumber\\
    &\geq \frac{C}{N^{(\beta-\epsilon/2)^{2}/2}\log N}
\end{align}
for some constant $C>0$. Then combining the above equation with \eqref{eq: Sakhaliminf} and \eqref{eq: liminfexpr} we have
\begin{align}
    \mathbb{P}\left(\left|\sum_{j=k_{N}+1}^{N}X_{ij}^{(N)}\right|\leq (\beta-2\epsilon)\sqrt{A_{1}N\log N}\right)
    &\leq 1-\frac{C}{N^{(\beta-\epsilon/2)^{2}/2}\log N}+\frac{C_{0}}{N^{2}(\log N)^{3}}\nonumber\\
    &\leq 1-\frac{C_{2}}{N^{(\beta-\epsilon/2)^{2}/2}\log N}
\end{align}
for all sufficiently large enough $N$ and some constant $C_{2}>0$. Then using the inequality $1-x\leq e^{-x}$ for any $x>0$, we have
\begin{align}
    \mathbb{P}\left(\max_{i=1}^{k_{N}}\left|\sum_{j=k_{N}+1}^{N}X_{ij}^{(N)}\right|\leq (\beta-\epsilon)\sqrt{A_{1}N\log N}\right)
    &\leq \exp\left(-\frac{C_{2}}{N^{(\beta-\epsilon/2)^{2}/2}\log N}k_{N}\right)\nonumber\\
    &=O\left(\exp(-C_{2}N^{1-(\beta-\epsilon/2)^{2}/2})\right)
\end{align}
Taking $\beta=\sqrt{2}$, it can be seen that
\begin{align}
    \sum_{N\geq 1}\mathbb{P}\left(\max_{i=1}^{k_{N}}\left|\sum_{j=k_{N}+1}^{N}X_{ij}^{(N)}\right|\leq (\sqrt{2}-\epsilon)\sqrt{A_{1}N\log N}\right)<\infty
\end{align}
Then by Borel-Cantelli Lemma we can conclude that 
\begin{align}
    \liminf_{N\rightarrow\infty}\frac{1}{\sqrt{N\log N}}\max_{i=1}^{k_{N}}\left|\sum_{j=k_{N}+1}^{N}X_{ij}^{(N)}\right|\geq (\sqrt{2}-\epsilon)\sqrt{A_{1}},\ a.s. \quad 0<\epsilon<\sqrt{2}.
\end{align}
Hence combining with \eqref{eq: TNVN} we must have
\begin{align}
    \liminf_{N\rightarrow\infty}\frac{T_{N}}{\sqrt{2N\log N}}\geq \sqrt{A_{1}}.
\end{align}
Then using \eqref{eq: TrainagleNorm} we have \eqref{eq:upperlowerbounds}.
\end{proof}

\begin{proof}[Proof of Corollary \ref{thm:spectral_norm_mean}]
We use the notation of Theorem \ref{thm:spectral_norm_bound}. 
Define a $N\times N$ diagonal matrix $E_{N}$ as
\begin{align*}
    E_{N}=\mathbb{E}X_{ij}^{(N)}-\sum_{j=1}^{N}\mathbb{E}X_{ij}^{(N)}\one_{i=j}
\end{align*}
Using Theorem \ref{thm:spectral_norm_bound} it is immediate that
\begin{align*}
    \lim_{N\rightarrow\infty}\|\widetilde{\Delta}_{N}\|/N\rightarrow 0\text{ almost surely},
\end{align*}
where $\widetilde{\Delta}_{N}=\Delta_{N}-E_{N}$. Observe that by assumption \ref{itm:A5_Norm} we have $\frac{\|E_{N}\|}{N}\rightarrow m$
Hence using triangle inequality we can conclude that
\begin{align*}
    \frac{\|\Delta_{N}\|}{N}\rightarrow m\text{ almost surely}.
\end{align*}
\end{proof}

\section*{Appendix}
In the appendix, we provide the proofs of some of the lemma used before. The methods are straightforward and hence they are recalled in the appendix. First, we provide proof of Lemma~\ref{lemma:Gaussianisation}. We shall use some notations and results from \citet{chatterjee2005simple}.

\begin{definition}\label{def:partial_sup}
For any open interval $I$ containing 0, any positive integer $n$, any function $f:I^{n}\rightarrow\mathbb{C}$ which is thrice differentiable in each coordinate, and $1\leq r\leq 3$, let
\begin{align}
    \lambda_{r}\left(f\right)=\sup\left\{\left|\partial_{i}^{p}f(x)\right|^{\frac{r}{p}}: 1\leq i\leq n, 1\leq p\leq r, x\in I^{n}\right\}
\end{align}
where $\partial_{i}^{p}$ denotes $p$-fold differentiation with respect to the $i^{th}$ co-ordinate.
\end{definition}
\begin{lemma}\citet[Theorem 1.1]{chatterjee2005simple}\label{lemma: Chatterjee}
Let $f:I^{n}\rightarrow\mathbb{R}$ be thrice differentiable in each argument. If we set $U=f(\mathbf{X})$ and $V=f(\mathbf{Y})$, then for any thrice differentiable $g:\mathbb{R}\rightarrow\mathbb{R}$ and any $K>0$,
\begin{align*}
    \left|\mathbb{E}g(U)-\mathbb{E}g(V)\right|
    &\leq C_{1}(g)\lambda_{2}(f)\sum_{i=1}^{n}\left[\mathbb{E}\left(X_{i}^{2}\one_{\left\{|X_{i}|>K\right\}}\right)+\mathbb{E}\left(Y_{i}^{2}\one_{\left\{|Y_{i}|>K\right\}}\right)\right]\\
    &+C_{2}(g)\lambda_{3}(f)\sum_{i=1}^{n}\left[\mathbb{E}\left(X_{i}^{3}\one_{\left\{|X_{i}|\leq K\right\}}\right)+\mathbb{E}\left(Y_{i}^{3}\one_{\left\{|Y_{i}|\leq K\right\}}\right)\right]
\end{align*}
where $C_{1}(g)=\|g'\|_{\infty}+\|g''\|_{\infty}$ and $C_{2}(g)=\frac{1}{6}\|g'\|_{\infty}+\frac{1}{2}\|g''\|_{\infty}+\frac{1}{6}\|g'''\|_{\infty}$
\end{lemma}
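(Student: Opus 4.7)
The plan is to establish Lemma~\ref{lemma: Chatterjee} by the Lindeberg swapping argument combined with a truncation at level $K$ that sidesteps any need for globally finite third moments in the tails. (Implicit in Chatterjee's original setting, and required for the stated bound to be meaningful, is the assumption that $\mathbf{X}$ and $\mathbf{Y}$ have mutually independent coordinates with matching first and second moments, $\mathbb{E}[X_i]=\mathbb{E}[Y_i]$ and $\mathbb{E}[X_i^2]=\mathbb{E}[Y_i^2]$.)

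First I would set up the interpolation via the hybrid vectors $\mathbf{W}_i=(Y_1,\ldots,Y_{i-1},X_i,\ldots,X_n)$ for $i=1,\ldots,n+1$, so that $\mathbf{W}_1=\mathbf{X}$ and $\mathbf{W}_{n+1}=\mathbf{Y}$, and write the telescoping identity
\[
\mathbb{E}g(f(\mathbf{X}))-\mathbb{E}g(f(\mathbf{Y}))=\sum_{i=1}^{n}\mathbb{E}\bigl[g(f(\mathbf{W}_i))-g(f(\mathbf{W}_{i+1}))\bigr].
\]
Since $\mathbf{W}_i$ and $\mathbf{W}_{i+1}$ differ only in the $i$-th coordinate, conditioning on the other entries reduces the $i$-th summand to a one-variable comparison of $\mathbb{E}[h_i(X_i)]$ against $\mathbb{E}[h_i(Y_i)]$, where $h_i(t):=g\bigl(f(Y_1,\ldots,Y_{i-1},t,X_{i+1},\ldots,X_n)\bigr)$. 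By the chain rule, $h_i''=g''(f)(\partial_i f)^2+g'(f)\,\partial_i^2 f$ and $h_i'''=g'''(f)(\partial_i f)^3+3g''(f)(\partial_i f)(\partial_i^2 f)+g'(f)\,\partial_i^3 f$; bounding each product of mixed partials by the appropriate $\lambda_r(f)$ via $|\partial_i^p f|\leq \lambda_r(f)^{p/r}$ yields $\sup_t|h_i''(t)|\leq C_1(g)\lambda_2(f)$ and $\sup_t|h_i'''(t)|\leq \bigl(\|g'\|_\infty+3\|g''\|_\infty+\|g'''\|_\infty\bigr)\lambda_3(f)$.

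The heart of the argument is then the Lindeberg truncation. Expanding $h_i(X_i)=h_i(0)+X_i h_i'(0)+\tfrac12 X_i^2 h_i''(0)+R(X_i)$, I would bound the remainder in two different ways. On the event $\{|X_i|\leq K\}$, the Lagrange form of the third-order remainder gives $|R(X_i)|\leq \tfrac16|X_i|^3\sup_t|h_i'''(t)|$; multiplying the above bound on $\sup|h_i'''|$ by the prefactor $\tfrac16$ collapses the constant to $C_2(g)\lambda_3(f)|X_i|^3$, which is the third-order contribution in the lemma. On the complementary event $\{|X_i|>K\}$ one cannot invoke the third-order remainder without global finite third moments, so I fall back to a direct estimate $|R(X_i)\one_{|X_i|>K}|\leq X_i^2\sup_t|h_i''(t)|\one_{|X_i|>K}\leq C_1(g)\lambda_2(f)X_i^2\one_{|X_i|>K}$. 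Doing the analogous split for $Y_i$, taking expectations, and using the matching of first two moments to cancel the $X_i h_i'(0)$ and $\tfrac12 X_i^2 h_i''(0)$ contributions between the $X$- and $Y$-sides, the four surviving remainder pieces, summed over $i$, assemble into precisely the bound claimed in the lemma.

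The main obstacle, beyond the routine swapping logic, is the arithmetic that produces the exact constants $C_1(g)$ and $C_2(g)$ in the stated forms: one must track that distributing the three derivatives among $g^{(p)}(f)$ and the various $\partial_i^q f$ factors yields coefficients $1,3,1$ in the $\lambda_3(f)$ bound for $h_i'''$, which when multiplied by the Taylor prefactor $\tfrac16$ collapse to exactly $\tfrac16\|g'\|_\infty+\tfrac12\|g''\|_\infty+\tfrac16\|g'''\|_\infty=C_2(g)$; and similarly for the $\lambda_2$ side. This bookkeeping is the only delicate piece; the rest is the standard Lindeberg replacement scheme.
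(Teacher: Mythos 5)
The paper does not prove this lemma at all: it is imported verbatim as \citet[Theorem 1.1]{chatterjee2005simple}, so there is no in-paper argument to compare against. Your reconstruction is the standard (and correct) proof of Chatterjee's invariance theorem: the telescoping over hybrid vectors, the reduction of each summand to a one-variable comparison by independence, the second-order Taylor expansion with the remainder split at level $K$ (Lagrange third-order bound on $\{|X_i|\le K\}$, the cruder $|R(x)|\le x^2\sup|h_i''|$ bound on $\{|X_i|>K\}$), and the cancellation of the zeroth-, first- and second-order terms via the matching moments are all exactly right. The constant bookkeeping also checks out: from $\lambda_r(f)=\sup|\partial_i^p f|^{r/p}$ one gets $(\partial_i f)^2,\ \partial_i^2 f\le\lambda_2(f)$ and $(\partial_i f)^3,\ (\partial_i f)(\partial_i^2 f),\ \partial_i^3 f\le\lambda_3(f)$, giving $\sup|h_i''|\le C_1(g)\lambda_2(f)$ and $\tfrac16\sup|h_i'''|\le C_2(g)\lambda_3(f)$. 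You were also right to flag the implicit hypotheses (independent coordinates, matching first and second moments), which the paper's statement omits; one further cosmetic point is that the third-moment terms should read $\mathbb{E}(|X_i|^3\one_{\{|X_i|\le K\}})$ — the absolute values are missing in the paper's transcription, and your argument naturally produces the correct version.
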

\begin{proof}[Proof of Lemma~\ref{lemma:Gaussianisation}]
For the proof we shall use a result from \citet{chatterjee2005simple} which is recalled later as Lemma~\ref{lemma: Chatterjee}. Consider $f,r,n$ as defined in Definition \ref{def:partial_sup}. Then observe that $\left|\partial_{i}^{p}\mathcal{R}f\right|=\left|\mathcal{R}\partial_{i}^{p}f\right|\leq\left|\partial_{i}^{p}f\right|$. Now  using the fact that $\frac{r}{p}>0$ we have $\left|\partial_{i}^{p}\mathcal{R}f\right|^{\frac{r}{p}}\leq \left|\partial_{i}^{p}f\right|^{\frac{r}{p}}$,  hence we have 
\begin{align}\label{eq:lambda_inequality}
    \lambda_{r}\left(\mathcal{R}f\right)\leq \lambda_{r}\left(f\right).
\end{align}
Let define $\widetilde{X}=\left(A_{N}^{0}(i,j)\right)_{1\leq i<j\leq N}$ and $\widetilde{Y}=\left(A_{N}^{g}(i,j)\right)_{1\leq i<j\leq N}$. Take $n=\frac{N(N-1)}{2}$. Then for all $ x=(x_{i,j})_{1\leq i<j\leq N}\in\mathbb{R}^{n}$ define a real symmetric $N\times N$ matrix $\Delta(x)$ as
\begin{align}
    \Delta(x)(i,j)=
    \begin{cases}
        x_{i\wedge j,i\vee j} & \text{ if } i\neq j\\
        -\sum_{k\neq i,k=1}^{N}x_{i\wedge k,i\vee k} & \text{ if } i=j
    \end{cases}
\end{align}
Define $\Phi(x)=H_{N}(\Delta(x))$. Observe that $\Delta(\Tilde{X})=\Delta_{N}^{0}$ and $\Delta(\Tilde{Y})=\Delta_{N}^{g}$. Since $\mathbb{E}\left(A_{N}^{0}(i,j)\right)=\mathbb{E}\left(A_{N}^{g}(i,j)\right)=0$ and $\mathbb{E}\left((A_{N}^{0}(i,j))^2\right)=\mathbb{E}\left((A_{N}^{g}(i,j))^2\right)=\sigma_{i,j}^{2}/N$. Hence the assumptions on $\widetilde{X}$ and $\widetilde{Y}$ for Lemma \ref{lemma: Chatterjee} are satisfied. Note that real part of $\Phi$ is  thrice differentiable as $\Phi$ is thrice differentiable.
Observe that $\pdv{\Delta(x)}{x_{i,j}}$ is a $N\times N$ matrix having $-1$ at $i^{th}$ and $j^{th}$ diagonals and $1$ at $(i,j)$ and $(j,i)$ positions. Using matrix identities derived in \citet{chatterjee2005simple} we have 
\begin{align}
    & \pdv{\Phi}{x_{i,j}}=-\frac{1}{N}\Tr\left(\pdv{\Delta}{x_{i,j}}K^{2}\right)\nonumber\\
    & \pdv[2]{\Phi}{x_{i,j}}=2\frac{1}{N}\Tr\left(\pdv{\Delta}{x_{i,j}}K\pdv{\Delta}{x_{i,j}}K^{2}\right)\\
    & \pdv[3]{\Phi}{x_{i,j}}=-6\frac{1}{N}\Tr\left(\pdv{\Delta}{x_{i,j}}K\pdv{\Delta}{x_{i,j}}K\pdv{\Delta}{x_{i,j}}K^{2}\right)\nonumber
\end{align}
where $K(x)=\left(\Delta(x)-zI_{N}\right)^{-1}$. These identities along with some standard norm inequlities give us
 \begin{align*}
     \left\|\pdv{\Phi}{x_{i,j}}\right\|_{\infty}\leq \frac{C_{1}}{N}, \quad \left\|\pdv[2]{\Phi}{x_{i,j}}\right\|_{\infty}\leq \frac{C_{2}}{N}, \quad \left\|\pdv[3]{\Phi}{x_{i,j}}\right\|_{\infty}\leq \frac{C_{3}}{N}
 \end{align*}
Then by definition we have 
\begin{align*}
    &\lambda_{2}(\Phi)\leq \sup\left\{\|\pdv{\Phi}{x_{i,j}}\|^{2}_{\infty},\|\pdv[2]{\Phi}{x_{i,j}}\|_{\infty}\right\}\leq \frac{K_{1}}{N}\\
    &\lambda_{3}(\Phi)\leq \sup\left\{\|\pdv{\Phi}{x_{i,j}}\|_{\infty}^{3},\|\pdv[2]{\Phi}{x_{i,j}}\|_{\infty}^{\frac{3}{2}},\|\pdv[3]{\Phi}{x_{i,j}}\|_{\infty}\right\}\leq \frac{K_{2}}{N}
\end{align*}
for some $K_{1},K_{2}>0$. Take $U=\mathcal{R}\Phi(\widetilde{X})$ and $V=\mathcal{R}\Phi(\widetilde{Y})$. Now using Lemma \ref{lemma: Chatterjee}, we have, $\forall\epsilon>0$
\begin{align*}
   & \left|\mathbb{E}[h(U)]-\mathbb{E}[h(V)]\right|\leq\\
   &
     C_{1}(h)\lambda_{2}(\Phi)\sum_{1\leq i<j\leq N}\bigg[\mathbb{E}\left(|A_{N}^{0}(i,j)|^{2}\mathbf{1}(|A_{N}^{0}(i,j)|>\epsilon)\right)
     +\mathbb{E}\left(|A_{N}^{g}(i,j)|^{2}\mathbf{1}(|A_{N}^{g}(i,j)|>\epsilon)\right)\bigg]\nonumber\\
    & +C_{2}(h)\lambda_{3}(\Phi)\sum_{1\leq i<j\leq N}\bigg[\mathbb{E}\left(|A_{N}^{0}(i,j)|^{3}\mathbf{1}(|A_{N}^{0}(i,j)|\leq\epsilon)\right)\nonumber
    +\mathbb{E}\left(|A_{N}^{g}(i,j)|^{3}\mathbf{1}(|A_{N}^{g}(i,j)|\leq\epsilon)\right)\bigg]\nonumber
\end{align*}
Denote $\mathbb{E}[X_{i,j}]=\mu_{i,j}$ and using $\lambda_{2}(\Phi)=O(N^{-1})$ we have\footnote{we use the notation $f_N\preceq g_N$ if $f_N\le C g_N$ for some $C$ for all $N$ large enough.}
\begin{align}
    &\lambda_{2}(\Phi)\sum_{1\leq i<j\leq N}\mathbb{E}\bigg[|A_{N}^{0}(i,j)|^{2}\mathbf{1}(|A_{N}^{0}(i,j)|>\epsilon)\bigg]\\
    &\preceq \frac{1}{N^{2}}\sum_{1\leq i<j\leq N}\mathbb{E}\bigg[|X_{i,j}-\mu_{i,j}|^{2}\mathbf{1}(|X_{i,j}-\mu_{i,j}|>\epsilon\sqrt{N})\bigg] \rightarrow 0, \,\,   \text{ as } N\rightarrow\infty.\nonumber
\end{align}
The last limit follows from \ref{itm:A2}. Using the definition of $A_{N}^{g}$ from \eqref{eq:ANgandDelNg} and $\sup_{i,j}\sigma_{i,j}<C_{0}$ for some $C_{0}>0$ (\ref{itm:A1}) we have
\begin{align}
    &\lambda_{2}(\Phi)\sum_{1\leq i<j\leq N}\mathbb{E}\bigg[|A_{N}^{g}(i,j)|^{2}\mathbf{1}(|A_{N}^{g}(i,j)|>\epsilon)\bigg]\\
    &\preceq \frac{1}{N^{2}}\sum_{1\leq i<j\leq N}\mathbb{E}\bigg[|G_{i,j}|^{2}\mathbf{1}(|G_{i,j}|>\frac{\epsilon\sqrt{N}}{C_{0}})\bigg]\nonumber
    \preceq \mathbb{E}\bigg[|G_{i,j}|^{2}\mathbf{1}(|G_{i,j}|>\frac{\epsilon\sqrt{N}}{C_{0}})\bigg]\nonumber\rightarrow 0 \text{ as } N\rightarrow\infty\nonumber
\end{align}
Now we deal with the factor involving the third derivative. Again using bounds from \ref{itm:A1} we have
\begin{align}
    \lambda_{3}(\Phi)\sum_{1\leq i<j\leq N}\mathbb{E}\bigg[|A_{N}^{0}(i,j)|^{3}\mathbf{1}(|A_{N}^{0}(i,j)| \leq\epsilon)\bigg]
    &\preceq \epsilon\frac{1}{N^{2}}\sum_{1\leq i<j\leq N}\mathbb{E}\bigg[|X_{i,j}-\mu_{i,j}|^{2}\bigg]\nonumber\\
    &\preceq\epsilon\quad\text{ as }N\rightarrow\infty.
\end{align}
Similarly for the Gaussian case we have,
\begin{align}
    \lambda_{3}(\Phi)\sum_{1\leq i<j\leq N}\mathbb{E}\bigg[|A_{N}^{g}(i,j)|^{3}\mathbf{1}(|A_{N}^{g}(i,j)|\leq\epsilon)\bigg]
    &\preceq \epsilon \frac{1}{N^{2}}\sum_{1\leq i<j\leq N}\mathbb{E}|G_{i,j}|^{2}\nonumber\\
    &\preceq \epsilon \text{ as } N\rightarrow\infty\nonumber
\end{align}
Hence for any $\epsilon>0$ we have
\begin{align}
    |\mathbb{E}(h(U))-\mathbb{E}(h(V))|\preceq\epsilon\text{ as } N\rightarrow\infty.
\end{align}
We have thus proved~\ref{eq:Gaussianisation 3}. Similarly, one can prove \ref{eq:Gaussianisation 4}.

\end{proof}

\bibliographystyle{abbrvnat}
\bibliography{eigenbib}

\begin{thebibliography}{41}
\providecommand{\natexlab}[1]{#1}
\providecommand{\url}[1]{\texttt{#1}}
\expandafter\ifx\csname urlstyle\endcsname\relax
  \providecommand{\doi}[1]{doi: #1}\else
  \providecommand{\doi}{doi: \begingroup \urlstyle{rm}\Url}\fi

\bibitem[Abbe(2017)]{abbe2017community}
E.~Abbe.
\newblock Community detection and stochastic block models: recent developments.
\newblock \emph{The Journal of Machine Learning Research}, 18\penalty0
  (1):\penalty0 6446--6531, 2017.

\bibitem[Ajanki et~al.(2017)Ajanki, Erd{\H{o}}s, and Kr{\"u}ger]{ajanki2017}
O.~H. Ajanki, L.~Erd{\H{o}}s, and T.~Kr{\"u}ger.
\newblock Universality for general wigner-type matrices.
\newblock \emph{Probability Theory and Related Fields}, 169\penalty0
  (3-4):\penalty0 667--727, 2017.

\bibitem[Anderson and Zeitouni(2006)]{anderson:zeitouni}
G.~W. Anderson and O.~Zeitouni.
\newblock A {CLT} for a band matrix model.
\newblock \emph{Probab. Theory Related Fields}, 134\penalty0 (2):\penalty0
  283--338, 2006.
\newblock ISSN 0178-8051.
\newblock \doi{10.1007/s00440-004-0422-3}.
\newblock URL \url{https://doi.org/10.1007/s00440-004-0422-3}.

\bibitem[Anderson et~al.(2010)Anderson, Guionnet, and
  Zeitouni]{anderson2010introduction}
G.~W. Anderson, A.~Guionnet, and O.~Zeitouni.
\newblock \emph{An introduction to random matrices}, volume 118.
\newblock Cambridge university press, 2010.

\bibitem[Bai and Silverstein(2010)]{bai2010spectral}
Z.~Bai and J.~W. Silverstein.
\newblock \emph{Spectral analysis of large dimensional random matrices},
  volume~20.
\newblock Springer, 2010.

\bibitem[Bordenave et~al.(2014)Bordenave, Caputo, and
  Chafa\"{\i}]{bordenave2014markov}
C.~Bordenave, P.~Caputo, and D.~Chafa\"{\i}.
\newblock Spectrum of {M}arkov generators on sparse random graphs.
\newblock \emph{Comm. Pure Appl. Math.}, 67\penalty0 (4):\penalty0 621--669,
  2014.
\newblock ISSN 0010-3640.
\newblock \doi{10.1002/cpa.21496}.
\newblock URL \url{https://doi.org/10.1002/cpa.21496}.

\bibitem[Borgs et~al.(2008)Borgs, Chayes, Lov{\'a}sz, S{\'o}s, and
  Vesztergombi]{borgs2008convergent}
C.~Borgs, J.~T. Chayes, L.~Lov{\'a}sz, V.~T. S{\'o}s, and K.~Vesztergombi.
\newblock Convergent sequences of dense graphs i: Subgraph frequencies, metric
  properties and testing.
\newblock \emph{Advances in Mathematics}, 219\penalty0 (6):\penalty0
  1801--1851, 2008.

\bibitem[Borgs et~al.(2019)Borgs, Chayes, Cohn, and Zhao]{borgs2019}
C.~Borgs, J.~Chayes, H.~Cohn, and Y.~Zhao.
\newblock An $l^{p}$ theory of sparse graph convergence i: Limits, sparse
  random graph models, and power law distributions.
\newblock \emph{Transactions of the American Mathematical Society},
  372\penalty0 (5):\penalty0 3019--3062, 2019.

\bibitem[Bryc et~al.(2006)Bryc, Dembo, and Jiang]{bryc2006spectral}
W.~Bryc, A.~Dembo, and T.~Jiang.
\newblock Spectral measure of large random hankel, markov and toeplitz
  matrices.
\newblock \emph{The Annals of Probability}, 34\penalty0 (1):\penalty0 1--38,
  2006.

\bibitem[Chakrabarty(2017)]{chakrabarty:2017}
A.~Chakrabarty.
\newblock The {H}adamard product and the free convolutions.
\newblock \emph{Statistics and Probability Letters}, 127:\penalty0 150--157,
  2017.

\bibitem[Chakrabarty et~al.(2019)Chakrabarty, Hazra, den Hollander, and
  Sfragara]{chakrabarty2018spectra}
A.~Chakrabarty, R.~S. Hazra, F.~den Hollander, and M.~Sfragara.
\newblock Spectra of adjacency and {L}aplacian matrices of inhomogeneous
  {E}rd{\H{o}}s--{R}{\'e}nyi random graphs.
\newblock To appear in Random Matrices: Theory and Applications, DOI:
  10.1142/S201032632150009X, 2019.

\bibitem[Chatterjee(2005)]{chatterjee2005simple}
S.~Chatterjee.
\newblock A simple invariance theorem.
\newblock \emph{arXiv preprint math/0508213}, 2005.

\bibitem[Chen et~al.(2018)Chen, Xi, and Lin]{chen:xi:lin}
C.~Chen, R.~Xi, and N.~Lin.
\newblock Community detection by {$L_0$}-penalized graph {L}aplacian.
\newblock \emph{Electron. J. Stat.}, 12\penalty0 (1):\penalty0 1842--1866,
  2018.
\newblock \doi{10.1214/18-EJS1445}.
\newblock URL \url{https://doi.org/10.1214/18-EJS1445}.

\bibitem[Chi(2016)]{chi}
Z.~Chi.
\newblock Random reversible {M}arkov matrices with tunable extremal
  eigenvalues.
\newblock \emph{Ann. Appl. Probab.}, 26\penalty0 (4):\penalty0 2257--2272,
  2016.
\newblock ISSN 1050-5164.
\newblock \doi{10.1214/15-AAP1146}.
\newblock URL \url{https://doi.org/10.1214/15-AAP1146}.

\bibitem[Chung(1997)]{chung}
F.~R.~K. Chung.
\newblock \emph{Spectral graph theory}, volume~92 of \emph{CBMS Regional
  Conference Series in Mathematics}.
\newblock Published for the Conference Board of the Mathematical Sciences,
  Washington, DC; by the American Mathematical Society, Providence, RI, 1997.
\newblock ISBN 0-8218-0315-8.

\bibitem[Cook et~al.(2018)Cook, Hachem, Najim, and
  Renfrew]{cook:hachem:najim:nonsymmetric}
N.~Cook, W.~Hachem, J.~Najim, and D.~Renfrew.
\newblock Non-{H}ermitian random matrices with a variance profile ({I}):
  deterministic equivalents and limiting {ESD}s.
\newblock \emph{Electron. J. Probab.}, 23:\penalty0 Paper No. 110, 61, 2018.
\newblock \doi{10.1214/18-ejp230}.
\newblock URL \url{https://doi.org/10.1214/18-ejp230}.

\bibitem[Couillet and Benaych-Georges(2016)]{couillet:BGF}
R.~Couillet and F.~Benaych-Georges.
\newblock Kernel spectral clustering of large dimensional data.
\newblock \emph{Electron. J. Stat.}, 10\penalty0 (1):\penalty0 1393--1454,
  2016.
\newblock \doi{10.1214/16-EJS1144}.
\newblock URL \url{https://doi.org/10.1214/16-EJS1144}.

\bibitem[Ding and Jiang(2010)]{ding2010spectral}
X.~Ding and T.~Jiang.
\newblock Spectral distributions of adjacency and {L}aplacian matrices of
  random graphs.
\newblock \emph{Ann. Appl. Probab.}, 20\penalty0 (6):\penalty0 2086--2117,
  2010.
\newblock ISSN 1050-5164.
\newblock \doi{10.1214/10-AAP677}.
\newblock URL \url{https://doi.org/10.1214/10-AAP677}.

\bibitem[Erd\H{o}s et~al.(2011)Erd\H{o}s, Yau, and Yin]{erdos2010universality}
L.~Erd\H{o}s, H.-T. Yau, and J.~Yin.
\newblock Universality for generalized {W}igner matrices with {B}ernoulli
  distribution.
\newblock \emph{J. Comb.}, 2\penalty0 (1):\penalty0 15--81, 2011.
\newblock ISSN 2156-3527.
\newblock \doi{10.4310/JOC.2011.v2.n1.a2}.
\newblock URL \url{https://doi.org/10.4310/JOC.2011.v2.n1.a2}.

\bibitem[Erd{\H{o}}s et~al.(2012)Erd{\H{o}}s, Yau, and Yin]{erdHos2012bulk}
L.~Erd{\H{o}}s, H.-T. Yau, and J.~Yin.
\newblock Bulk universality for generalized wigner matrices.
\newblock \emph{Probability Theory and Related Fields}, 154\penalty0
  (1-2):\penalty0 341--407, 2012.

\bibitem[Hachem et~al.(2007)Hachem, Loubaton, and Najim]{hachem:2007}
W.~Hachem, P.~Loubaton, and J.~Najim.
\newblock Deterministic equivalents for certain functionals of large random
  matrices.
\newblock \emph{The Annals of Applied Probability}, 17\penalty0 (3):\penalty0
  875--930, 2007.

\bibitem[Huang and Landon(2020)]{huang2015spectral}
J.~Huang and B.~Landon.
\newblock Spectral statistics of sparse erd{\H{o}}s--r\'enyi graph laplacians.
\newblock \emph{Ann. Inst. Henri Poincar\'{e} Probab. Stat.}, 56\penalty0
  (1):\penalty0 120--154, 2020.
\newblock ISSN 0246-0203.
\newblock \doi{10.1214/19-AIHP957}.
\newblock URL \url{https://doi.org/10.1214/19-AIHP957}.

\bibitem[Jiang(2012{\natexlab{a}})]{jiang2012empirical}
T.~Jiang.
\newblock Empirical distributions of laplacian matrices of large dilute random
  graphs.
\newblock \emph{Random Matrices: Theory and Applications}, 1\penalty0
  (03):\penalty0 1250004, 2012{\natexlab{a}}.

\bibitem[Jiang(2012{\natexlab{b}})]{jiang2012low}
T.~Jiang.
\newblock Low eigenvalues of laplacian matrices of large random graphs.
\newblock \emph{Probability Theory and Related Fields}, 153\penalty0
  (3-4):\penalty0 671--690, 2012{\natexlab{b}}.

\bibitem[Lin(2017)]{lin2017recent}
G.~D. Lin.
\newblock Recent developments on the moment problem.
\newblock \emph{Journal of Statistical Distributions and Applications},
  4\penalty0 (1):\penalty0 1--17, 2017.

\bibitem[Lov{\'a}sz(2012)]{lovasz2012large}
L.~Lov{\'a}sz.
\newblock \emph{Large networks and graph limits}, volume~60.
\newblock American Mathematical Soc., 2012.

\bibitem[Lov{\'a}sz and Szegedy(2006)]{lovasz2006limits}
L.~Lov{\'a}sz and B.~Szegedy.
\newblock Limits of dense graph sequences.
\newblock \emph{Journal of Combinatorial Theory, Series B}, 96\penalty0
  (6):\penalty0 933--957, 2006.

\bibitem[Mingo and Speicher(2017)]{Mingo:speicher:book}
J.~A. Mingo and R.~Speicher.
\newblock \emph{Free probability and random matrices}, volume~35 of
  \emph{Fields Institute Monographs}.
\newblock Springer, New York; Fields Institute for Research in Mathematical
  Sciences, Toronto, ON, 2017.
\newblock ISBN 978-1-4939-6941-8; 978-1-4939-6942-5.
\newblock \doi{10.1007/978-1-4939-6942-5}.
\newblock URL \url{https://doi.org/10.1007/978-1-4939-6942-5}.

\bibitem[Nica and Speicher(2006)]{nica2006lectures}
A.~Nica and R.~Speicher.
\newblock \emph{Lectures on the combinatorics of free probability}, volume~13.
\newblock Cambridge University Press, 2006.

\bibitem[Nica et~al.(2002)Nica, Shlyakhtenko, and
  Speicher]{Nica:shlyakhtenko:speicher}
A.~Nica, D.~Shlyakhtenko, and R.~Speicher.
\newblock Operator-valued distributions. {I}. {C}haracterizations of freeness.
\newblock \emph{Int. Math. Res. Not.}, \penalty0 (29):\penalty0 1509--1538,
  2002.
\newblock ISSN 1073-7928.
\newblock \doi{10.1155/S1073792802201038}.
\newblock URL \url{https://doi.org/10.1155/S1073792802201038}.

\bibitem[Pastur and Vasilchuk(2000)]{pastur2000law}
L.~Pastur and V.~Vasilchuk.
\newblock On the law of addition of random matrices.
\newblock \emph{Communications in Mathematical Physics}, 214\penalty0
  (2):\penalty0 249--286, 2000.

\bibitem[Rudin et~al.(1964)]{rudin1964principles}
W.~Rudin et~al.
\newblock \emph{Principles of mathematical analysis}, volume~3.
\newblock McGraw-hill New York, 1964.

\bibitem[Shlyakhtenko(1996)]{shlyakhtenko:1996}
D.~Shlyakhtenko.
\newblock Random gaussian band matrices and freeness with amalgamation.
\newblock \emph{International Mathematics Research Notices}, 1996\penalty0
  (20):\penalty0 1013--1025, 1996.

\bibitem[Squartini et~al.(2015)Squartini, de~Mol, den Hollander, and
  Garlaschelli]{squartini2015breaking}
T.~Squartini, J.~de~Mol, F.~den Hollander, and D.~Garlaschelli.
\newblock Breaking of ensemble equivalence in networks.
\newblock \emph{Physical review letters}, 115\penalty0 (26):\penalty0 268701,
  2015.

\bibitem[Touchette(2015)]{touchette2015equivalence}
H.~Touchette.
\newblock Equivalence and nonequivalence of ensembles: Thermodynamic,
  macrostate, and measure levels.
\newblock \emph{Journal of Statistical Physics}, 159\penalty0 (5):\penalty0
  987--1016, 2015.

\bibitem[Tran et~al.(2013)Tran, Vu, and Wang]{tran2013sparse}
L.~V. Tran, V.~H. Vu, and K.~Wang.
\newblock Sparse random graphs: Eigenvalues and eigenvectors.
\newblock \emph{Random Structures \& Algorithms}, 42\penalty0 (1):\penalty0
  110--134, 2013.

\bibitem[van~der Hofstad(2017)]{van2009random}
R.~van~der Hofstad.
\newblock \emph{Random graphs and complex networks. {V}ol. 1}.
\newblock Cambridge Series in Statistical and Probabilistic Mathematics, [43].
  Cambridge University Press, Cambridge, 2017.
\newblock ISBN 978-1-107-17287-6.
\newblock \doi{10.1017/9781316779422}.
\newblock URL \url{https://doi.org/10.1017/9781316779422}.

\bibitem[von Luxburg et~al.(2008)von Luxburg, Belkin, and
  Bousquet]{luxburg:belkin:boysquet:2008}
U.~von Luxburg, M.~Belkin, and O.~Bousquet.
\newblock Consistency of spectral clustering.
\newblock \emph{Ann. Statist.}, 36\penalty0 (2):\penalty0 555--586, 2008.
\newblock ISSN 0090-5364.
\newblock \doi{10.1214/009053607000000640}.
\newblock URL \url{https://doi.org/10.1214/009053607000000640}.

\bibitem[Wigner(1955)]{10.2307/1970079}
E.~P. Wigner.
\newblock Characteristic vectors of bordered matrices with infinite dimensions.
\newblock \emph{Annals of Mathematics}, 62\penalty0 (3):\penalty0 548--564,
  1955.
\newblock ISSN 0003486X.
\newblock URL \url{http://www.jstor.org/stable/1970079}.

\bibitem[Zhou and Amini(2019)]{zhou:amini}
Z.~Zhou and A.~A. Amini.
\newblock Analysis of spectral clustering algorithms for community detection:
  the general bipartite setting.
\newblock \emph{J. Mach. Learn. Res.}, 20:\penalty0 Paper No. 47, 47, 2019.
\newblock ISSN 1532-4435.

\bibitem[Zhu(2020)]{zhu2020graphon}
Y.~Zhu.
\newblock A graphon approach to limiting spectral distributions of
  {W}igner-type matrices.
\newblock \emph{Random Structures \& Algorithms}, 56\penalty0 (1):\penalty0
  251--279, 2020.

\end{thebibliography}

\end{document}